\documentclass[12pt]{article}
\usepackage{graphicx} 

\usepackage[margin=1.2in]{geometry}

\usepackage{amssymb,amsmath,amsthm, enumerate, xcolor, amsfonts, hyperref}
\usepackage{geometry}

\newtheorem{theorem}{Theorem}[section]
\newtheorem{lemma}[theorem]{Lemma}
\newtheorem{corollary}[theorem]{Corollary}
\newtheorem{definition}[theorem]{Definition}

\newtheorem{observation}[theorem]{Observation}
\newtheorem{conjecture}[theorem]{Conjecture}
\newtheorem{claim}[theorem]{Claim}
\newtheorem{remark}[theorem]{Remark}

\title{The extremal cases of the Erd\H os--S\'os conjecture}
\author{Bruce Reed\footnote{Institute of Mathematics, Academia Sinica, Taiwan; \texttt{bruce.al.reed@gmail.com} Supported by  NSTC Grant 112-2115-M-001 -013 -MY3} \  and Maya Stein\footnote{Departamento de Ingenier\'ia Matem\'atica y Centro de Modelamiento
Matem\'atico (CNRS IRL2807), Universidad de Chile, Santiago, Chile. Supported by ANID Regular Grant 1260024 and by ANID grant CMM Basal FB210005.\\ Parts of this work were conceived while both authors were in residence at the Simons Laufer Mathematical Sciences Institute in Berkeley, California during the Spring semester 2025, supported by the National Science Foundation under Grant No.~DMS-1928930.}}
\date{}

\makeatletter
\let\old@bibitem\@bibitem
\def\@bibitem#1{%
  \@ifundefined{b@#1}{%
    \global\let\old@item\item
    \def\item[##1]{\global\let\item\old@item\comment}%
    \old@bibitem{#1}%
  }{%
    \old@bibitem{#1}%
  }%
}
\makeatother

\begin{document}

\maketitle

\begin{abstract}
The Erd\H os--S\'os conjecture states that every $n$-vertex graph $G$ with more than $(k-2)n/2$ edges contains every $k$-vertex tree.
    We solve the extremal cases of this conjecture, showing that for some fixed $\mu>0$, the conjecture holds for each $G$ that minimally satisfies the 
 assumptions of the conjecture and  has a subgraph~$H$ of minimum degree $\delta(H)\ge (1-\mu)k$. In our proof, we mainly have to deal with $H$ taking two different shapes: either $H$ is close to the complete graph $K_k$ or $H$ is close to the complete bipartite graph $K_{k,k}$.
\end{abstract}

\section{Introduction}

The following famous conjecture was made in the early 1960s (see~\cite{Erdos64}).

\begin{conjecture}[Erd\H os--S\'os conjecture]\label{es}
For $k, n\in\mathbb N$,  
every $n$-vertex graph with more than $(k-2)n/2$ edges contains every $k$-vertex tree.
\end{conjecture}

The conjecture is best possible. The easiest example is a complete graph on $k-1$ vertices, which has average degree $k-2$, and  contains no $k$-vertex graph, in particular no $k$-vertex  tree. More generally, any $(k-2)$-regular graph serves as an example for Conjecture~\ref{es} being tight, as it fails to contain the $k$-vertex star.  

See~\cite{maya-survey} for an overview of the 
many partial results regarding Conjecture~\ref{es}. 
The most significant  progress  today can be summarised as follows. Besomi, Pavez-Sign\'e and one of the authors~\cite{BPS3} proved the conjecture  for  large trees of linearly bounded maximum degree  and large dense  host graphs. Building on this,  Pokrovskiy~\cite{Pok24} showed the conjecture holds for all large trees of constant maximum degree. Without any restrictions on the maximum degree of the tree, the conjecture was solved for all large trees and host graphs $G$ with $|V(G)|\le (1+10^{-1})k$ in~\cite{RS25}, and very recently, Davoodi,  Piguet, {\v R}ada and Sanhueza-Matamala
gave a proof of the approximate version of the conjecture  for large dense graphs~\cite{beyond}. 
A  proof of Conjecture~\ref{es} for  large graphs was announced in the early 1990s~\cite{AKSS:ES}, but no manuscript is available.

In this paper, we focus on  the extremal case(s) of Conjecture~\ref{es}. Our main result (Theorem \ref{thm:realmain} below) solves Conjecture~\ref{es} for all graphs $G$ that minimally satisfy the
 assumptions  
and 
have a  subgraph $H$ of minimum degree almost~$k$.  
 For convenience we introduce the following definition\footnote{Robust graphs have also been called {\it strictly balanced} or {\it density-maximal} in the literature.} where $d(H)$ is the average degree of graph $H$.
\begin{definition}
\hskip-.1cm A graph $G$ is {\em robust} if 
 $d(G')<d(G)$ for each proper subgraph~$G'$~of~$G$. 
 \end{definition}
 Clearly,  any minimal counterexample to  Conjecture \ref{es} is robust. 
Our main result reads as follows. 

 \begin{theorem}\label{thm:realmain}
 There is a constant $\mu>0$ 
 such that  for each $k\in\mathbb N$  and
     each robust graph~$G$ with  $d(G)>k-2$ the following holds. If $G$  has a subgraph~$H$  with $\delta(H)\ge (1-\mu)k$, 
       then $G$ contains every tree on $k$ vertices.
 \end{theorem}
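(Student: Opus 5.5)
Robustness will be the main source of structure. Since $d(G)>k-2$ and every proper subgraph has strictly smaller average degree, deleting any vertex $v$ gives $d(G-v)<d(G)$, i.e.\ $\deg(v) > d(G)/2 > (k-2)/2$. So $\delta(G)\ge \lceil (k-1)/2\rceil$. More generally, for every vertex set $S$ the number of edges leaving $S$ or inside $S$ is large compared with $|S|(k-2)/2$. I would exploit this together with the dense piece $H$ with $\delta(H)\ge(1-\mu)k$. The plan is to first pin down the shape of $H$ (after possibly passing to a minimal such subgraph and cleaning it), and then embed an arbitrary $k$-vertex tree $T$ greedily using $H$ for most of $T$ and the rest of $G$ for the remainder.

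\textbf{Step 1 (structure of $H$).} Take $H$ with $\delta(H)\ge(1-\mu)k$, and consider $|V(H)|$. If $|V(H)|\ge k$ and $H$ is not too sparse, a standard greedy embedding already embeds any tree with $\delta\ge k-1$. Since we only have $(1-\mu)k$, we need to be more careful. I would split by how many vertices have degree $\ge k-1$ in $G$, and by the density of $H$.
\begin{itemize}
\item If $|V(H)|\le (1+\mu')k$, then $H$ is close to $K_k$ (every vertex misses only $O(\mu k)$ others).
\item Otherwise, $\delta\ge(1-\mu)k$ together with the absence of the needed embedding forces, by a stability argument for the ``spread out'' case, $H$ to be close to $K_{k,k}$.
\end{itemize}
In the non-extremal remaining cases I would embed $T$ greedily, using a small separator decomposition of $T$ (cut $T$ into a few pieces of size $\le \mu k$ via a central vertex) and random-like placement.

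\textbf{Step 2 (near-$K_k$ case).} Here $|V(H)|\approx k$, and we must find almost all $k$ vertices. Robustness with $d(G)>k-2$ prevents $G=H$ when $|V(H)|\le k-1$. So either $H$ has at least $k$ vertices with few missing edges, or vertices outside $H$ send many edges into $H$. Embed a large subtree of $T$ into $H$ using near-completeness. Leaves and small pendant subtrees are deferred and attached via a matching argument, or via Hall's theorem using outside neighbours, whose degrees are $>(k-2)/2$ by robustness. The delicate part is trees with many leaves adjacent to few vertices (star-like trees). For those, I would map the high-degree vertices of $T$ to vertices of $G$ of degree $\ge k-1$, whose existence follows from $d(G)>k-2$.

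\textbf{Step 3 (near-$K_{k,k}$ case).} Let $H\approx K_{k,k}$ with sides $A,B$. Embed $T$ respecting its bipartition $(X,Y)$. If both $|X|,|Y|\le(1-2\mu)k$, this is easy. The hard subcase is when one class, say $X$, has size close to $k$. Then we need an extra vertex or extra edges inside a side, which robustness and $d(G)>k-2$ should supply: a near-$K_{k-1,k-1}$ structure alone has average degree $\le k-1$, and the surplus edges must go somewhere. This balancing of the bipartition against the available surplus is what I expect to be the main obstacle. I would handle it by finding, in $T$, a vertex whose removal allows swapping parity of a large subtree, and using one edge inside $A$ or $B$ (or an outside vertex with many neighbours in $A\cup B$) to realise that parity switch.
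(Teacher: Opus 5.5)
The central gap is in Step~1, which is where the whole reduction happens. Write $\mu'$ for the constant in the statement and $\mu\gg\mu'$ for the one in Theorem~\ref{thm:main}. A graph $H$ with $\delta(H)\ge(1-\mu')k$ and, say, $|V(H)|\ge 2k$ need not be close to $K_{k,k}$. Think of a random $(1-\mu')k$-regular graph on $100k$ vertices, possibly with a disjoint $K_{k-1}$ added. So the right dichotomy concerns \emph{subgraphs} of $H$. You also cannot invoke ``absence of an embedding forces $H$ to be near $K_{k,k}$'': showing that $T$ embeds when $H$ has no near-$K_k$ or near-$K_{k,k}$ piece \emph{is} the reduction.

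``Greedy embedding with random-like placement'' skips the one real difficulty. A top-down greedy embedding into $H$ can only get stuck in its last $\mu'k$ steps, when the image $w$ of the current parent has had almost all of $N(w)$ used. So for every $w$ that can end up in this situation, about $\mu'k$ earlier vertices must have been placed outside $N(w)$, and there may be many such $w$ with different neighbourhoods.

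The paper handles this as follows.
\begin{enumerate}
\item It assumes $\Delta(H)<k-1$, since otherwise Theorem~\ref{thmHRSW} applies.
\item For each $w$ it considers the set $S_w$ of vertices with at least $(1-\mu^2/8)k$ neighbours in $N(w)$. If $S_w\cap N(w)$ or $S_w\setminus N(w)$ has size about $k$, this yields a subgraph as in Theorem~\ref{thm:main}(i) or~(ii). Otherwise $|S_w|\le(1-\mu/4)k$, so in a uniformly random greedy embedding a parent often lies outside $S_w$, and then its child lands outside $N(w)$ with probability $\Omega(\mu^2)$.
\item Call $w$ ``endangered'' if its neighbourhood contains all but $\mu'k$ of the vertices used so far. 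A Chernoff and union bound show that after $\mu^2k/10$ steps any two endangered vertices have nearly identical neighbourhoods with high probability. Markov's inequality shows the root's image is not endangered with probability at least $\mu^6$.
\item Hence forcing $2\mu^8k$ later vertices outside the neighbourhood of one endangered vertex protects all of them at once.
\end{enumerate}
Nothing in your outline plays the role of $S_w$ or of this ``dangerous neighbourhoods nearly coincide'' argument.

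Steps~2 and~3 essentially restate Theorem~\ref{thm:main}, which is most of the paper, and the mechanisms you propose are not justified. Robustness only guarantees that the surplus over $k-2$ exists somewhere in $G$. It need not give an edge inside a side of $H$ or an outside vertex with many neighbours in $A\cup B$. For example, two disjoint $2$-edge-connected $(k-2)$-regular graphs joined by one edge form a robust graph. There the whole surplus is a single edge from $H$ to a vertex with one neighbour in $H$. So in general parts of $T$ must be routed into $G-H$ along thinly spread edges, and you need a way to decide where.

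In the bipartite case the hard trees have $|S_1|\le\mu k$ with almost all of $S_2$ leaves, and the obstruction is the leaf capacity at the images of the few $S_1$-vertices, not a parity imbalance. The paper measures this capacity by averaging $f(v)=d(v,V(H'))+1000\mu\, d(v,X\cap Y)+2d(v,G-H-X)$, using Observation~\ref{minimalityOfG}. It then embeds a chosen family of components of $T-t$ into $G-H-X$.

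In the clique-like case the paper proceeds as follows.
\begin{enumerate}
\item It enlarges $H$ by vertices with at least $\frac{3+100\mu}{4}k$ neighbours, so either Theorem~\ref{thmRS25} applies or the enlarged graph is closed.
\item It uses Theorem~\ref{thmRS} if a vertex of degree $k-1$ appears.
\item It bounds an exceptional set in terms of $\bar a=k-2-d(H')$.
\item It treats trees with many leaves (via a matching leaving $H$) separately from trees with few leaves (routing bare paths through the low-degree vertices of $H'$).
\end{enumerate}
``Attach the rest via Hall's theorem'' and ``use one edge to switch parity'' do not replace any of this.
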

Theorem \ref{thm:realmain} will enable us to prove the dense case of Conjecture~\ref{es} in the companion paper~\cite{DenseES}:
\begin{theorem}{\normalfont\textbf{\cite{DenseES}}}
     \label{maint'}
     For every $\gamma>0$ there is an $n_0$ 
 such that for all $n\ge n_0$ and $k \ge \gamma n$,  every $n$-vertex graph $G$  with   $d(G)>k-2$ contains every tree $T$ on $k$ vertices.
 \end{theorem}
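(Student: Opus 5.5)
We will derive the theorem from Theorem~\ref{thm:realmain} by a stability argument, with the regularity method handling the non-extremal situation. Suppose $G$ is an $n$-vertex graph with $d(G)>k-2$ and $k\ge\gamma n$. First we pass to a subgraph $G^*\subseteq G$ with $d(G^*)>k-2$ that has as few vertices as possible. Among the subgraphs on that many vertices, we take one with as few edges as possible subject to $d(G^*)>k-2$. We will check that $G^*$ is robust in the sense of the definition above, or at least can be replaced by a robust subgraph of the same kind. Robustness gives $\delta(G^*)>d(G^*)/2>(k-2)/2$, since deleting a vertex of degree at most $d/2$ does not lower the average degree. Also $k\le |V(G^*)|\le n\le k/\gamma$, so $G^*$ is still dense relative to $k$. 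If $|V(G^*)|\le (1+10^{-1})k$, we finish directly by~\cite{RS25}, since $n$ (and hence $k$) is large. Otherwise, fix the constant $\mu$ from Theorem~\ref{thm:realmain}. If $G^*$ has a subgraph $H$ with $\delta(H)\ge(1-\mu)k$, Theorem~\ref{thm:realmain} applies to $G^*$ and we are done. So the remaining task is the \emph{non-extremal} case: $G^*$ is robust, dense, has more than $(1+10^{-1})k$ vertices, and has no subgraph of minimum degree at least $(1-\mu)k$.

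In this case we apply the regularity lemma to $G^*$ with parameters $\varepsilon\ll\eta\ll\mu,\gamma$ and obtain a reduced graph $R$. Because $d(G^*)>k-2$ and $\delta(G^*)>(k-2)/2$, the weighted reduced graph inherits average degree about $k|R|/|V(G^*)|$ and minimum degree about half of that. We then follow the strategy of the approximate result~\cite{beyond}. We look in $R$ for a suitable matching-type structure: a connected matching, or a pair of clusters of large degree together with matched neighbourhoods. Into this structure we embed the tree $T$, cut into small subtrees via a standard decomposition, using regular pairs. The approximate version needs average degree $(1+\eta)k$. Our aim is to recover that slack $\eta k$ from the non-extremality assumption instead of from the degree hypothesis.

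The key step, and the one I expect to be the main obstacle, is a stability lemma. It should say: if the embedding structure in $R$ only has capacity $(1+o(1))k$ and not $(1+\eta)k$, then $G^*$ contains a subgraph $H$ with $\delta(H)\ge(1-\mu)k$, namely one close to $K_k$ or to $K_{k,k}$, contradicting non-extremality. To prove it, I would follow the extremal configurations through the argument of~\cite{beyond}, where every inequality is tight. Tightness forces a set of about $k$ clusters that are almost completely joined to each other (the $K_k$ shape), or two sets of about $k$ clusters that are almost completely joined across (the $K_{k,k}$ shape). Robustness of $G^*$, together with $|V(G^*)|>(1+10^{-1})k$, rules out sparse leftovers and lets us clean such a near-extremal piece into a genuine subgraph of minimum degree $(1-\mu)k$. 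The delicate part is handling trees with high-degree vertices, for which~\cite{beyond} uses ad hoc structures. For these I would first try to show that every near-tight case again collapses to one of the two shapes above. Once the slack $\eta k$ is secured, the embedding finishes as in the approximate proof.
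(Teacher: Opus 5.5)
Your preliminary reductions are correct. A vertex-minimal subgraph $G^*$ with $d(G^*)>k-2$ is robust: a proper subgraph either misses a vertex, so has average degree at most $k-2$, or has the same vertices and fewer edges. You also correctly have $k\le |V(G^*)|\le k/\gamma$ and $\delta(G^*)>d(G^*)/2$, and if $G^*$ has a subgraph of minimum degree $(1-\mu)k$, Theorem~\ref{thm:realmain} embeds $T$. This paper does not prove the statement. It only says that Theorem~\ref{thm:realmain} is the tool for the extremal case and defers the proof to \cite{DenseES}, so your split into an extremal and a non-extremal case is in line with that.

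Beyond that point, however, what you have is a research plan rather than a proof. The non-extremal case is the whole substance of the theorem, and for it you only postulate a stability lemma and describe how you would try to prove it. ``Following the extremal configurations through \cite{beyond}, where every inequality is tight'' is not an argument: that proof loses an $\eta k$ in many places (regularity, cleaning, cutting $T$, the ad hoc structures for high-degree tree vertices). You also leave the high-degree case open yourself.

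More seriously, the mechanism you propose is wrong as stated. Non-extremality does not give back the missing $\eta k$ of average degree, and configurations with no slack are not confined to the $K_k$ and $K_{k,k}$ shapes. Take $k$ even, $s=(k-2)/2$, $\gamma$ small and $n=\lfloor k/\gamma\rfloor$.
\begin{itemize}
\item Start from the split graph $K_s\vee\overline{K_{n-s}}$, whose average degree is $k-2-s(s+1)/n\approx k-2-\gamma k/4$.
\item Add an $f$-regular graph on the independent side, with $f\approx\gamma k/4$ chosen minimal so that $d(G)>k-2$.
\item One checks that every proper subgraph has strictly smaller average degree. So $G$ is robust, and no subgraph has average degree above $k$.
\item The independent-side vertices have degree about $(\frac12+\frac\gamma4)k$, and the hubs span only $K_s$. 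So $G$ has no subgraph of minimum degree $(1-\mu)k$, let alone a near-$K_k$ or near-$K_{k,k}$.
\end{itemize}
Thus $G$ lies in your non-extremal case, yet there is no degree slack anywhere to recover. Trees embed in $G$ only by exploiting different resources: about $k/2$ vertices of degree $\approx n\gg k$ that are complete to everything, and the linear-degree graph on the independent side. A correct treatment of the non-extremal case must identify and use structures of this kind, including for trees with high-degree vertices, inside the argument of \cite{beyond} or a replacement for it. Nothing in your proposal does this. Your expectation that near-tight cases collapse to the two shapes is false at the level of average degree, so this is a missing idea, not a technicality.
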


 The proof of Theorem~\ref{thm:realmain} relies on 
a  fine-tuned version of it, namely Theorem~\ref{thm:main}. There,  two possible shapes  of~$H$ are specified (bipartite and non-bipartite), as well as bounds on the order of $H$ for each case. Roughly speaking, $H$ either resembles 
the complete graph $K_k$ or  the complete bipartite graph $K_{k,k}$.
The proof of Theorem~\ref{thm:realmain} thus splits into the reduction of it to Theorem~\ref{thm:main}, and the proof of the latter theorem. 

Our paper is organised as follows. 
Section~2 contains the statement of Theorem~\ref{thm:main} and   some fundamental earlier results needed for our proofs.
We  provide an overview~of the main ideas of all proofs in Section~\ref{proofov} and show some preliminary results in Section~\ref{prelims}.  The reduction of Theorem~\ref{thm:realmain} to Theorem~\ref{thm:main} is given in Section~\ref{sec:redu}. The~proof~of Theorem~\ref{thm:main} is given in Section~\ref{mainproof6},  divided into two subsections according to the shape~of~$H$.

\subsection{Recent AI solution of the Erd\H{o}s--S\'{o}s conjecture}
It was announced very recently that GPT-6 Astra proved the Erd\H{o}s--S\'{o}s conjecture in full. Our proof was found without any use of AI. The version uploaded as the first arXiv version of this paper was ready in this form  in June 2026, but was not uploaded until the AI proof was announced. 
Although our result is now eclipsed by the AI proof, we believe that   the methods of this and the companion paper will likely have an impact on future work on related conjectures on tree containment.

\section{Theorem~\ref{thm:main} and important tools} 
 The result behind our main theorem is the following.

 \begin{theorem}\label{thm:main}
 There are $\mu>0$, $k_0\in\mathbb N$ such that  for all $k\ge k_0$ and for each robust graph $G$  with $d(G)>k-2$ the following holds. If $G$   contains a subgraph~$H$ with one of the following characteristics:
     \begin{enumerate}[(i)]
     \item\label{cliquelikelem}
     $|V(H)|
    \le (1+\mu)k$  and 
    $H$ has minimum degree $\delta(H)\ge (1-\mu)k$; or
     \item\label{bicliquelikelem}
     $H$ is  bipartite,   $|V(H)|
    \le (2+2\mu)k$ and $\delta(H)\ge (1-\mu)k$,
       \end{enumerate}
         then $G$ contains every tree on $k$ vertices as a subgraph.
 \end{theorem}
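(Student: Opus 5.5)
We argue by contradiction. Fix a $k$-vertex tree $T$ and suppose the robust graph $G$ with $d(G)>k-2$ contains $H$ as in (i) or (ii) but no copy of $T$. Robustness gives local structure: every vertex of $G$ has degree at least $\lceil (k-1)/2\rceil$, since deleting a vertex of smaller degree would not lower the average degree. The same counting shows that every set $S$ satisfies $e(S)+e(S,V\setminus S)>(k-2)|S|/2$. The core of the argument is a "nearly greedy" embedding of $T$ into $H$, with $G$ supplying a few extra vertices or edges where $H$ is deficient. We use the standard tree facts: $T$ has a vertex $r$ whose removal leaves components of order at most $k/2$, and $T$ either has many leaves or many bare paths (Krivelevich's dichotomy, $\ell$ leaves or $(k-2\ell)/(\text{const})$ bare paths of length $\ge$ const).

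\textbf{Case (i).} $H$ has at most $(1+\mu)k$ vertices and minimum degree at least $(1-\mu)k$. First we embed greedily into $H$ a subtree $T'\subseteq T$ obtained by deleting either about $\sqrt\mu k$ leaves or about $\sqrt\mu k$ bare-path interiors. Greedy embedding works because each vertex still has at least $(1-\mu)k-|T'|\ge \sqrt\mu k/2$ free neighbours. The difficulty is the last $\mu k$ vertices, where a vertex of $H$ may have no free neighbour. We then split according to the leaf/bare-path dichotomy.

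\begin{itemize}
\item \emph{Many leaves.} We finish with a matching/Hall argument. The parents of the removed leaves need disjoint sets of fresh neighbours, and these are found in $N_G(\cdot)$ outside $H$ if necessary. Robustness, together with $d(G)>k-2$, forces either $|V(H)|\ge k$ or some vertex of $H$ to have high degree into $V(G)\setminus V(H)$.
\item \emph{Many bare paths.} We connect the endpoints through $H$. Here $H$ is highly connected, since any two vertices have $(1-3\mu)k$ common neighbours, so short paths can be embedded by an absorption-style swap.
\end{itemize}

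The key quantitative point is that $|V(H)|\ge k-1$ plus the degree surplus from $d(G)>k-2$ yields at least one extra vertex of room. If $H$ were exactly $K_{k-1}$ with no outside edges, robustness would be contradicted because $d(G)>k-2$ would fail.

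\textbf{Case (ii).} $H$ is bipartite with classes $A$ and $B$, each of size about $k$ to $(1+2\mu)k$. We take the bipartition $(T_1,T_2)$ of $T$ and embed $T_1$ into $A$ and $T_2$ into $B$. This fits provided $|T_1|\le|A|$ and $|T_2|\le|B|$, and it always fits when the classes of $T$ are unbalanced enough relative to $H$. The same two-phase approach applies: greedy embedding, then completion via leaves or bare paths.

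The extremal obstacle is a tree whose larger class has more than about $k(1-\mu)$ vertices, where there is not enough room on one side. Here we use edges of $G$ inside $A$ or $B$, or to outside vertices. Robustness with $d(G)>k-2$ prevents $G$ from being $H$ plus little else, because $K_{k-1,k-1}$-like graphs have average degree at most $k-1$ with equality only in tight configurations. We locate one such edge and place a single "parity-switching" edge of $T$ on it, which shifts one vertex's worth of imbalance between the classes.

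\textbf{Main obstacle.} I expect the hardest step to be the exact counting at the boundary. This means proving that the slack from $d(G)>k-2$ plus robustness always yields enough extra vertices or non-bipartite edges near $H$ for the final $O(\mu k)$ vertices. It likely needs a careful case analysis on the degrees of $H$-vertices into $V(G)\setminus V(H)$. If necessary, we would first try to enlarge $H$ by absorbing high-degree outside vertices before starting the embedding.
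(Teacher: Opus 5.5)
There is a genuine gap. You correctly name the difficulty (your ``main obstacle''), but you give no mechanism that turns the \emph{global} hypothesis $d(G)>k-2$, via robustness, into room for the last $\Theta(\mu k)$ vertices, and both devices you do offer are wrong.

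\textbf{Case (i).} Your ``key quantitative point'' $|V(H)|\ge k-1$ is false. You only know $|V(H)|\ge\delta(H)+1\ge(1-\mu)k+1$, so the shortfall can be of order $\mu k$, not one vertex. Observation~\ref{minimalityOfG} applied to $V(H)$ gives only an \emph{aggregate} number of edges leaving $H$, proportional to the degree deficit of $H$. These edges may be spread thinly, and nothing prevents every vertex of $H$ from having $G$-degree at most $k-2$. Your plan of embedding almost all of $T$ greedily in $H$ and then completing the removed leaves by a Hall-type matching therefore fails. A parent placed on a vertex of degree $k-2$ has room for its leaves only if enough of $T$ was already placed outside its neighbourhood, so the excursions out of $H$ must be planned \emph{first}.

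Your fallback of enlarging $H$ is indeed the paper's first step, but it is only the start:
\begin{itemize}
\item The paper adds outside vertices having at least $\frac{3+100\mu}{4}k$ neighbours. Each addition raises the average degree by more than $\frac12$, so either the average degree reaches $k$ on at most $(1+3\mu)k$ vertices and Theorem~\ref{thmRS25} applies, or no outside vertex has that many neighbours in the enlarged graph $H'$.
\item It rules out vertices of degree $k-1$ in $H'$ by Theorem~\ref{thmRS}.
\item It measures the deficit $\bar a=k-2-d(H')\in(0,\mu k)$ and strips an exceptional set $X$ so that $G'=G-H'-X$ has $\delta(G')\ge 11\mu k$.
\item It then first embeds on the order of $\bar a$ vertices of $T$ into $G'$. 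This is done either through components of $T-t$ at a separator $t$, rooted in $N(v)\cap V(G')$ for some $v\in V(H')$ with many neighbours in $G'$, or through root--child pairs placed across a matching leaving $N(w)$.
\end{itemize}
Only then does a finish in the spirit of your leaves/bare-paths dichotomy work. With many leaves, the leaves hang from parents in the set $Z$ of vertices of near-maximal degree. When $T$ has fewer than $100\mu k$ leaves, bare $5$-vertex paths are routed so that their interior vertices use up the low-degree vertices of $H'-Z'$. The remainder is then embedded among vertices of degree at least $k-2-100\bar a$, compensated by the at least $200\bar a$ vertices already placed in $G'$. Your bare-path step, joining endpoints through common neighbourhoods, does not serve this absorbing purpose.

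\textbf{Case (ii).} The ``parity-switching edge'' addresses the wrong obstruction. After the easy greedy case you have $s_1\le\mu k$. So almost all of the $k-s_1$ vertices of the larger class are leaves hanging off at most $\mu k$ parents. Meanwhile the opposite side of $H$ may have only $(1-\mu)k$ vertices, and each host vertex only $(1-\mu)k$ neighbours in $H$. Hence up to about $\mu k$ leaves must land outside that side, each at a neighbour of its parent's image. Shifting one vertex of imbalance cannot achieve this. Which parents can absorb the overflow depends on how the robustness surplus is distributed, and your proposal does not address that.

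The paper handles this with a weighted averaging:
\begin{itemize}
\item It isolates $X=V_{G-H}(k/8,V(H))$, with $|X|\le 19\mu k$, and the low-degree set $Y$, and assigns $X\setminus Y$ to sides $B_1,B_2$.
\item It averages $f(v)=d_G(v,V(H'))+1000\mu\,d_G(v,X\cap Y)+2d_G(v,G-H-X)$ over the two sides of the bipartite graph between $A_1\cup B_1$ and $A_2\cup B_2$. This yields sets $Q_i\subseteq Q_i'$ of vertices with large weighted degree.
\item Via Claim~\ref{extend}, it places the parents of leaves in $Q_i$ or $Q_i'$. Before that, it sends suitable components of $T-t$ into $G-H-X$ from a vertex $v_i$ of maximum degree into $G-H-X$.
\end{itemize}
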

 Note that apart from describing the subgraph $H$ in more detail, Theorem~\ref{thm:main} differs from  Theorem~\ref{thm:realmain} in that  it   only holds for large graphs.  It will be easy to drop this restriction in Theorem~\ref{thm:realmain} (see the beginning of Section~\ref{sec:redu} for an argument).

The reduction of Theorem~\ref{thm:realmain} to Theorem~\ref{thm:main} relies on a result of Havet, Wood and the authors \cite{HRSW20}. 
\begin{theorem}[Theorem~1.3 in \cite{HRSW20}]
\label{thmHRSW}
 There is a $\gamma > 0$ such that if a graph has maximum degree at least $k-1$ and minimum degree at least $(1-\gamma) (k-1)$ then it contains every tree $T$ on $k$ vertices as a subgraph.\end{theorem}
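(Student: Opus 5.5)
The plan is a case analysis on the shape of $T$. Throughout, the main device is the greedy embedding: any tree on at most $\delta(G)+1$ vertices embeds in $G$, since each new vertex has a parent with at least $\delta(G)$ neighbours, of which fewer than $\delta(G)$ are already used. Fix a vertex $v$ with $\deg(v)\ge k-1$, and write $\delta=\delta(G)\ge(1-\gamma)(k-1)$. We may restrict to the component of $G$ containing $v$. The difficulty is to embed the last $\approx\gamma k$ vertices of $T$, where greedy fails. We choose constants $\gamma\ll\beta\ll 1$, and split according to whether $T$ has many leaves (at least $\beta k$) or few.

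\textbf{Many leaves.} Let $L$ be a set of $\ell\ge\beta k$ leaves of $T$, and let $P$ be the set of their parents.
\begin{itemize}
\item If one parent $t$ carries at least $\gamma' k$ leaves, where $\gamma\ll\gamma'\ll\beta$, we root the embedding at $t\mapsto v$. We embed $T-L$ greedily, which is possible since $|T-L|\le k-\beta k\le\delta+1$. We then embed the leaves of the other parents. Each such parent has at least $\delta-(k-\ell)\ge \ell-\gamma k$ unused neighbours, so a Hall-type matching argument attaches them, possibly after discarding a few problematic leaves. Those discarded leaves are reassigned to $t$, using the slack $\deg(v)\ge k-1$: the neighbourhood of $v$ contains all still-unused vertices we need, because $|N(v)|\ge k-1$ exactly matches the count of the non-root vertices of $T$.
\item If every parent carries few leaves, we apply Hall's condition directly to the bipartite graph between $P$ (with multiplicities) and the unused vertices. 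Each parent sees at least $\ell-\gamma k$ free vertices and its demand is small, so Hall holds unless a set $S\subseteq P$ has small joint neighbourhood. We fix such a defect by re-embedding $S$, or by choosing the embedding of $T-L$ randomly so that neighbourhoods of the images of $P$ are spread out.
\end{itemize}

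\textbf{Few leaves.} Then $T$ has at least $\Omega(k)$ vertex-disjoint bare paths of length $\ge 1/\beta$, so it contains a collection of bare paths with total length about $\sqrt\gamma\, k$. We remove their interiors, embed the rest greedily, and must then reconnect the pairs of endpoints by paths of exactly prescribed lengths through the unused vertices $U$, with $|U|\approx\sqrt\gamma k$. Vertices outside $U$ have almost all their neighbours in the used set, so the structure of $G$ around the embedded image is very rigid. The component of $v$ containing $k-1$ neighbours and having minimum degree $\approx k$ forces it to look like a near-clique $K_k$ or a near-biclique $K_{k,k}$. We would prove this dichotomy via a stability argument on $N(v)$ and its neighbourhoods. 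In the near-clique case, connecting paths of prescribed length are easy, since almost all pairs are adjacent. In the near-bipartite case, we must respect the parity imposed by the bipartition. We resolve that by choosing which side the root of $T$ goes to using $v$'s large degree, so that the class sizes of $T$ fit the sides.

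I expect the few-leaves case to be the main obstacle, specifically proving the structural dichotomy (near-$K_k$ versus near-$K_{k,k}$) robustly enough to route exact-length paths while also balancing the bipartition classes of $T$. I would first attempt this via a rotation/absorption scheme: reserve a small random set of absorbing vertices before the greedy phase, and use them to close up the bare paths at the end.
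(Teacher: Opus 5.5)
The paper does not prove this statement: it is quoted as Theorem~1.3 of \cite{HRSW20} and used as a black box (e.g.\ in Section~\ref{sec:redu}). So I judge your plan on its own terms. As written it is not a proof. Both hard cases are left open, and for the same reason.

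The hypothesis $\Delta(G)\ge k-1$ is indispensable: $K_{k-1}$ has minimum degree $k-2\ge(1-\gamma)(k-1)$ once $\gamma(k-1)\ge 1$, yet contains no $k$-vertex tree. Your Hall step and your near-clique reconnection never use $v$. Here is a graph where both break. Let $\delta=\lceil(1-\gamma)(k-1)\rceil\le k-2$, take disjoint cliques $K^1,K^2$ on $\delta+1$ vertices, and join one vertex $v\in K^1$ to $k-1-\delta$ vertices of $K^2$. Then $\delta(G)=\delta$, $\deg(v)=k-1$, and $v$ is a cut vertex.

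In your second bullet, suppose $T-L$ is embedded in $K^1-v$ (nothing in your plan prevents this). Every free neighbour of a parent then lies in $K^1$, and there are only $\delta+1-(k-\ell)<\ell$ of them, so Hall fails for $S=P$. This is a global shortage of room, not a local defect, and no re-embedding or randomisation inside $K^1$ repairs it.

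The few-leaves case fails the same way. If the greedy phase puts the roughly $(1-\sqrt\gamma)k$ non-interior vertices into $K^1$, the roughly $\sqrt\gamma k$ interior vertices of the removed bare paths must fit into the roughly $(\sqrt\gamma-\gamma)k$ free vertices of $K^1$. They cannot go elsewhere, because a path joining two vertices of $K^1$ cannot pass through $K^2$.

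The missing idea is a decision made at the outset. You must choose which vertex $t$ of $T$ (e.g.\ a separator) goes to $v$. You must then route entire components of $T-t$, of total size at least about $k-1-\delta$, through the neighbours of $v$ lying outside the dense part that contains $v$. Finally you must show that both pieces, leaves included, still fit.

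Your structural dichotomy is also false as a statement about $G$. The graph above is far from both $K_k$ and $K_{k,k}$, and so is a $\delta$-regular graph of girth at least $5$ with one extra vertex joined to $k-1$ of its vertices. In addition, $|U|\approx\sqrt\gamma k$ tacitly assumes $|V(G)|\approx k$, which is not a hypothesis.

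What one can hope for is only a conditional, local statement: if an embedding gets stuck, the neighbourhoods of the relevant images nearly coincide and are nearly used up, much like the sets $S_w$ in Section~\ref{sec:redu}. Turning that into exact-length reconnections, together with an exit through $v$ when the dense piece has fewer than $k$ vertices, is precisely the part you defer to an unspecified absorption scheme.

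A smaller point concerns your first bullet. A leaf cannot be ``reassigned'' to $t$, since its parent is fixed by $T$. In fact no Hall argument is needed there. If $t$ has a set $L_t$ of at least $\gamma(k-1)$ leaf neighbours, map $t$ to $v$ and embed $T-L_t$ greedily; this works because $T-L_t$ has at most $k-\gamma(k-1)\le\delta+1$ vertices. Then place $L_t$ on the unused neighbours of $v$, of which there are at least $(k-1)-(k-|L_t|-1)=|L_t|$. Consequently, in all remaining cases you may assume every vertex of $T$ has fewer than $\gamma(k-1)$ leaf neighbours.
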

 
For the proof of Theorem~\ref{thm:realmain}, we need the following two results which were  proved by the authors in~\cite{RS23a, RS23b} and in \cite{RS25}. 
 
\begin{theorem}[Theorem~1.2 in \cite{RS23b}]
\label{thmRS}
 There is a $k_0\in \mathbb N$ such that for each $k\ge k_0$ if a $k$-vertex graph has maximum degree at least $k-1$ and minimum degree at least $2 (k-1)/3$ then it contains every tree $T$ on $k$ vertices as a subgraph.
 \end{theorem}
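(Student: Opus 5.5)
Since the host graph $G$ has exactly $k$ vertices, we must find a \emph{spanning} copy of $T$. The vertex $w$ of degree $k-1$ is universal. The plan is to split according to the structure of $T$ and to use a stability approach on $G$.

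\textbf{Choosing the image of $w$ and reducing to a high-leaf or low-leaf tree.} First fix a vertex $t\in V(T)$ to be mapped to $w$. Take $t$ to be a vertex of maximum degree, or, if $T$ has no large-degree vertex, a vertex whose removal splits $T$ into components of controlled size (a centroid-type vertex). Mapping $t$ to $w$ means the neighbours of $t$ impose no constraint, since $w$ sees everything. After this choice, distinguish two cases.
\begin{enumerate}[(a)]
\item $T$ has at least $\varepsilon k$ leaves, or many vertices adjacent to leaves.
\item $T$ has few leaves, so it contains at least $\Omega(k)$ vertex-disjoint bare paths of bounded length.
\end{enumerate}
In case (a), remove a suitable set $L$ of leaves and embed $T-L$ into $G$, leaving a set $U$ with $|U|=|L|$. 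Then complete the embedding by a Hall-type matching between the parents of $L$ and $U$. Here $\delta(G)\ge 2(k-1)/3$ guarantees that each vertex of $U$ sees at least about two thirds of the graph. Hall's condition can only fail if the parents cluster into a small set with a sparse neighbourhood. In that situation, re-embed the parents, or route through $w$, which is adjacent to all of $U$. In case (b), embed most of $T$ greedily or via a regularity and tree-cutting argument. Then use the bare paths as flexible connectors that absorb the leftover vertices. This relies on the fact that minimum degree above $k/2$ gives strong connectivity and Dirac-type path-covering properties.

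\textbf{Stability dichotomy for $G$.} Either $G$ is ``non-extremal'', in which case it has good expansion: every set $S$ of size between $\varepsilon k$ and $(1-\varepsilon)k$ has $|N(S)\setminus S|$ linearly large. Or $G$ is close to one of the extremal configurations showing the bound $2(k-1)/3$ is tight. These are, for instance, $w$ joined to a graph close to the disjoint union of two cliques, or close to a structure with a sparse set of about $k/3$ vertices. In the non-extremal case, apply the regularity lemma to $G-w$, cut $T$ into small subtrees, and assign these subtrees to regular pairs. This gives an approximate embedding, which is completed by the absorbing and matching steps above. In the extremal cases, argue directly. Split $T$ at $t$ into the components of $T-t$, and distribute these components among the near-cliques by a number-partitioning argument. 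Here the slack coming from $\delta(G)\ge 2(k-1)/3$, together with the edges between the parts, is used to correct imbalances.

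\textbf{Expected main obstacle.} The hardest step should be the exact extremal case. There $G$ is very close to a tight example and $T$ is very unbalanced, for example a spider or a tree whose components after removing $t$ cannot be split evenly between the parts of $G$. The partition argument then has essentially no slack, and one must exploit the few extra edges forced by the minimum degree condition to move single vertices across. I would first try to prove a lemma saying that the components of $T-t$ (with $t$ possibly replaced by a second high-degree vertex of $T$) can always be partitioned to match the part sizes up to an error of $1$. That remaining error would then be fixed using one cross edge of $G$.
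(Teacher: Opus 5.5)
The paper does not prove this statement. It is imported from the authors' earlier work, where its proof is spread over two papers, and used as a black box: for instance, in the non-bipartite case of Theorem~\ref{thm:main} it is applied to a vertex of degree at least $k-1$ together with $k-1$ of its neighbours. Your proposal therefore has to stand on its own, and it does not. It is an outline of the generic regularity/stability/absorption programme in which every decisive step is deferred, and its stability part gives no information.

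\textbf{The stability dichotomy is empty.} Since $|G|=k$ and $\delta(G)\ge 2(k-1)/3$, any two vertices have at least $(k-4)/3$ common neighbours. So $G-w$ cannot be close to a disjoint union of two cliques, since each clique would need about $2k/3$ vertices. Moreover, the expansion property you use to define ``non-extremal'' holds in \emph{every} graph satisfying the hypotheses:
\begin{itemize}
\item if $|S|\le 2(k-1)/3-\varepsilon k$, a single vertex of $S$ already has more than $\varepsilon k$ neighbours outside $S$;
\item if $|S|>2(k-1)/3-\varepsilon k$, every vertex outside $S$ has a neighbour in $S$, so $|N(S)\setminus S|=k-|S|\ge\varepsilon k$.
\end{itemize}
So the dichotomy tells you nothing beyond the hypothesis. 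The extremal analysis you expect to be hardest is aimed at a two-clique configuration that cannot occur. The partition lemma you plan for it is false anyway, e.g.\ for three components of order $(k-1)/3$ against two parts of order about $(k-1)/2$. Meanwhile your regularity step has to work with a reduced graph of minimum degree about $(2/3-\varepsilon)$ times its order, below the threshold you are proving. Nothing in the proposal explains why this still yields a \emph{spanning} copy of $T$.

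\textbf{The completion steps you name fail as stated.}
\begin{itemize}
\item Case (a): if $|U|\le (k-1)/3$, a vertex of degree $2(k-1)/3$ may have no neighbour in $U$ at all. So whether the images of the leaf-parents see enough of $U$ depends on the entire embedding of $T-L$. ``Re-embedding the parents'' changes $U$ and forces their neighbours in $T-L$ to move, and no argument shows a consistent choice exists. You would have to place the leaf-parents relative to the future leftover set from the very start, which is where the real work lies.
\item Case (a), other remedy: routing through $w$ is unavailable. $w$ already hosts $t$, so only the leaf children of $t$ benefit from its universality.
\item Case (b): bare paths can absorb leftover vertices only if an absorbing structure is reserved before the approximate embedding. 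The path ends must be placed so that the eventually uncovered vertices lie in their common neighbourhoods. No such structure is described, and a regularity-based approximate embedding by itself gives no control over which vertices remain uncovered.
\end{itemize}
What is missing is precisely the content of the theorem: an argument producing an exact spanning embedding at the threshold $2(k-1)/3$.
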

 
\begin{theorem}[Theorem~1.2 in \cite{RS25}]
\label{thmRS25}
There are $k_0 \in\mathbb N$ and $\delta>0$ such that for all $k\ge k_0$ every graph $G$ with $|V (G)| \le (1 + \delta)(k-1)$  and with average degree exceeding $k - 1$ contains  every tree $T$ on $k$ vertices as a subgraph.
 \end{theorem}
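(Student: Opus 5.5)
The natural first step is to pass from $G$ to a density-maximal subgraph. Take $G'\subseteq G$ with $d(G')>k-1$ and with as few vertices as possible. Deleting a vertex of degree at most $d(G')/2$ does not lower the average degree, so minimality gives $\delta(G')>(k-1)/2$. Average degree exceeding $k-1$ gives two further facts:
\begin{itemize}
\item $|V(G')|\ge k+1$;
\item $\Delta(G')\ge k$.
\end{itemize}
Since $|V(G')|\le (1+\delta)(k-1)$, the complement of $G'$ has average degree at most about $\delta k$. Counting non-edges, all but at most $\sqrt\delta k$ vertices of $G'$ have degree at least $(1-\sqrt\delta)k$. Write $L$ for these exceptional low-degree vertices and $B=V(G')\setminus L$ for the rest. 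So $G'[B]$ is an almost-complete graph on roughly $k$ vertices, and each vertex of $L$ still sends more than $(k-1)/2-\sqrt\delta k$ edges into $B$.

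Next I split according to the structure of $T$, using the standard dichotomy for trees with $k$ vertices: $T$ has either at least $\beta k$ leaves or at least $\beta k$ vertex-disjoint bare paths of length $3$, for a suitable $\beta\gg\sqrt\delta$.

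\emph{Many leaves.} Choose a set $\Lambda$ of leaves of $T$, hanging mostly off few parents. Embed $T-\Lambda$ greedily into $B$, with two constraints:
\begin{itemize}
\item the parent carrying the most leaves is sent to a vertex of degree at least $k$;
\item the other parents go to vertices of degree at least $(1-\sqrt\delta)k$.
\end{itemize}
Then attach the leaves by a Hall-type matching argument, with the vertices of $L$ serving as leaf images where needed. Hall's condition should hold because every parent's image sees almost all of $V(G')$ and the slack $|V(G')|-k\ge 1$ is exactly what the max-degree vertex supplies.

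\emph{Many bare paths.} Remove the interiors of $\beta k$ bare paths. Embed the remaining forest-skeleton into $B$, avoiding a reserved random subset. Then reconnect the endpoints by paths of length $3$. The interiors of these paths absorb all vertices of $L$ that must be used, which is possible since each vertex of $L$ has linearly many neighbours in $B$ and $|L|\le\sqrt\delta k\ll\beta k$. The remaining connections use the near-completeness of $G'[B]$.

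The main obstacle is the tight counting when $|V(G')|$ is very close to $k$. Then almost every vertex, including those in $L$, must be used, and a tree vertex of very large degree must land on a vertex of degree at least $k-1$ whose neighbourhood is essentially everything. Here I would first try an absorbing approach:
\begin{itemize}
\item reserve small gadgets in $T$ (leaves or bare paths) that can swallow any leftover low-degree vertex;
\item treat separately the case where $T$ has a vertex of degree above $(1-2\sqrt\delta)k$, embedding it on a maximum-degree vertex of $G'$ and finishing with a matching argument.
\end{itemize}
Theorem~\ref{thmRS} handles the near-spanning situation when $G'$ has exactly $k$ vertices after trimming.
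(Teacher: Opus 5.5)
The paper does not prove this statement; it is imported as Theorem~1.2 of \cite{RS25}. Your sketch therefore has to stand on its own, and its central step fails as written. In the many-leaves case, once $T-\Lambda$ is placed in $B$, the unused set $U$ has exactly $|\Lambda|+|V(G')|-k$ vertices and contains all of $L$. Your Hall argument works only when the parent on the vertex $w$ of degree at least $k$ carries at least about $\sqrt\delta k$ leaves of $\Lambda$. In that case a set of parents containing it is fine, because $w$ alone has at least $|\Lambda|+1$ neighbours in $U$. Any other set carries at most $|\Lambda|-\sqrt\delta k$ leaves, which a single parent image of degree at least $(1-\sqrt\delta)k$ can absorb.

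But you cannot in general choose $\Lambda$ ``hanging mostly off few parents''. A comb (a path with one pendant leaf at each vertex) has $k/2$ leaves, one per parent, and no bare paths. There, Hall's condition for the set of all parents other than the one on $w$ requires their images to dominate all but $|V(G')|-k+1$ vertices of $U$, and greedy placement gives no such control. Concretely, let $4\mid k$ and $2\le s<\sqrt{k/2}$. Let $B$ be a clique on $k-s$ vertices and $L$ a clique on $2s$ vertices, and join every vertex of $L$ to all of $B$ except a common set $W$ of $k/4$ vertices. This graph has:
\begin{itemize}
\item $k+s$ vertices and average degree $k-1+s^2/(k+s)$;
\item no proper subgraph with fewer vertices and average degree above $k-1$, so $G'=G$ is a legitimate choice;
\item the property that every embedding must use at least $s$ vertices of $L$.
\end{itemize}
Yet a greedy embedding may put every parent except the one on $w$ into $W$, after which at most one leaf can land in $L$. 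The missing ingredient is a deliberate placement of parents (e.g.\ randomised, with concentration) that dominates the low-degree vertices which must be used, with a proof that this is compatible with the rest of the embedding. The bare-path case has the same hole: a vertex of $L$ can be an interior path vertex only if it is adjacent to a skeleton endpoint you have already fixed.

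Beyond this, you explicitly leave the regime you yourself identify as the main obstacle, $|V(G')|$ close to $k$ with low-degree vertices that must be absorbed, to an approach you ``would first try''. So the argument is incomplete exactly where the difficulty lies. The fallback to Theorem~\ref{thmRS} does not close it. That result needs a $k$-vertex host of minimum degree at least $2(k-1)/3$ with a vertex of degree $k-1$. Your $G'$ is only guaranteed minimum degree above $(k-1)/2$, and trimming may be forced to keep low-degree vertices. For example, in the construction above take $5\mid k$, $5\mid s$, $s\le\delta k/2$, $|W|=2k/5$ and $|L|=6s/5$, so $|B|=k-s/5$. The average degree still exceeds $k-1$, but every $k$-vertex subgraph contains a vertex of $L$, whose degree is at most $3k/5+s<2(k-1)/3$.
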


\section{Overview of the proof}\label{proofov}

In this section we give a brief sketch of our proof. First 
we focus on how
Theorem~\ref{thm:realmain}
reduces to
Theorem~\ref{thm:main}
and then we prove 
Theorem~\ref{thm:main}. 

\subsection{Overview: reduction of Theorem~\ref{thm:realmain} to Theorem~\ref{thm:main}}

We choose $\mu'$ from Theorem~\ref{thm:realmain} much smaller than $\mu$ from Theorem~\ref{thm:main}. The case $k<k_0$ is easily checked, so we focus on the case $k\ge k_0$. Given~$H'$ as in Theorem~\ref{thm:realmain}, we need to either embed $T$ or find a subgraph of $G$ that satisfies the conditions in Theorem~\ref{thm:main} (i) or (ii). We note that if in the early stages of our embedding, we manage to 
embed $\mu k$  vertices outside every neighbourhood, then we will be able to  finish embedding the tree greedily. 

Fixing any $w\in V(H')$ and considering the set $S_w$ of all vertices that are almost complete  to $N(w)$, we show that unless one can embed $T$, 
one can extract either a dense non-bipartite subgraph of order roughly  $k$ (as in Theorem~\ref{thm:main}(i)) or a dense balanced bipartite subgraph
with parts contained in $N(w)$ and $V(H')\setminus N(w)$ (as in Theorem~\ref{thm:main}(ii)).
So if neither extraction is possible for any $w$, then for every $w$ the corresponding set $S_w$ is small. This is crucial to the fact that a random greedy embedding into~$H'$ succeeds. Indeed, 
after randomly embedding a small part of $T$, let us consider those   vertices whose neighbourhoods contain all but $\mu ' k$ of the embedded vertices.  We  show that  with high probability any two such vertices must have almost identical neighbourhoods. Therefore,  it suffices to embed many vertices outside one such neighbourhood,
which automatically protects all others and allows the final greedy completion of the embedding of $T$ in $H'$.

\subsection{Overview of the proof of Theorem~\ref{thm:main}}
We are given a robust graph $G$ of average degree exceeding $k-2$, with a certain subgraph $H$. 
For both types of $H$ we argue by contradiction to the assumption that the given $k$-vertex tree $T$ does not embed in  $G$.

\subsubsection{Bipartite $H$}

In this case,   $H$ is bipartite with sides $A_1,A_2$, having order at most $(2+2\mu)k$ and
minimum degree at least $(1-\mu)k$.
Let $T$ have colour classes $S_1,S_2$, where $|S_1|\le |S_2|$.
If $|S_1|$ is not tiny then the large minimum degree in $H$ allows a straightforward greedy
top--down embedding, so we may assume $|S_1|\le \mu k$.
Consequently, almost all vertices of $S_2$ are leaves. So, letting $L_2\subseteq S_2$ be the set of leaves in $S_2$,
the tree $T-L_2$ has size at most $2\mu k$ and maximum degree $O(\mu k)$.

Turning to the host $G$, we first isolate the vertices outside $H$ that see at least $k/8$ of $H$.
A simple edge-count using the fact that only few vertices of~$H$ can have degree $k-1$ or more in $G$
shows that $|X|=O(\mu k)$, and hence the remainder $G-H-X$ has minimum degree $\Omega(k)$.
We then show that any vertex with  many neighbours in $H$ and many neighbours in $G-H-X$
must have comparatively low total degree, as otherwise we can embed a large part of $T$ outside $H$ and
finish inside $H$.

Next, we partition the high-degree vertices of $X$ into $B_1,B_2$ according to which side of $H$
they mainly connect to, and consider the bipartite graph $H'$ induced by the edges between
$A_1\cup B_1$ and $A_2\cup B_2$.
A weighted averaging argument (based on minimality/robustness) produces large sets of vertices on each
side of $H'$ with substantial ``embedding potential'' measured by a function $f$ that counts neighbours
available inside $V(H)\cup X$ and, with extra weight, neighbours in $G-H-X$.
Now, if we embed an appropriate separator vertex of $T$ and a
controlled family of components of $T$ into $G-H-X$, then the remainder of $T-L_2$ embeds top--down
in $H'$, and the remainder of~$L_2$ can be attached greedily using the degree
guarantees encoded in $f$.

\subsubsection{Non-bipartite $H$}
Our starting point is a dense subgraph $H\subseteq G$ with $|H|\le (1+\mu)k$ and $\delta(H)\ge (1-\mu)k$.
We   enlarge $H$ to a set $H'$ by repeatedly adding vertices outside the current set which have at least
$\frac{3+100\mu}{4}k$ neighbours inside. Each such addition increases the average degree by a positive constant
(cf.~\eqref{increase}). Consequently, either we add $2\mu k$ vertices and obtain a graph of average degree at least $k$,
in which case $T$ embeds by Theorem~\ref{thmRS25}, or else $H'$ is ``closed'' in the sense that every vertex
outside $H'$ has fewer than $\frac{3+100\mu}{4}k$ neighbours in~$H'$ (cf.~\eqref{outsideH'baddeg}).
We may further assume that $H'$ contains no vertex of degree at least $k-1$, since otherwise we embed $T$
using Theorem~\ref{thmRS} applied to the closed neighbourhood of such a vertex.

We construct an exceptional $X$ set outside $H'$ by iteratively including  vertices with tiny degree to the remaining outside vertices. Then either
$|X|$ is  large enough to force a vertex of degree at least $k-1$ in $H'':=G[H'\cup X]$ (again giving an embedding), or else
the leftover graph $G':=G-H'-X$ has linear minimum degree (cf.~\eqref{mindegggforreal}).
This means that the $k/2$-separator $t$ of $T$  cannot have many leaf-children
(Claim~\ref{thasatmostmukleafch}), as otherwise we could use a vertex of $H'$ with a suitably large weighted neighbourhood
to embed the neighbourhood of $t$ and then complete the embedding greedily.

We then define a near-clique   $H^*$ inside $H'$ which has size close to $k$ and minimum degree $k-O(\mu k)$ (and therefore strong
common-neighbour properties) and we control the number of large degree vertices of  $H''$ (Claims~\ref{Aug14claim1} and~\ref{Aug14claim2}), which leads to the observation that $T$ cannot have $100\mu k$ leaves
(Claim~\ref{Tatmost200leaves}), as this would yield a matching-and-expansion based embedding.
Thus $T$ has few leaves, which implies that most vertices lie on long bare paths. In the final step we route
the bare paths through the small exceptional parts of the host
and finish the embedding in $H^*$.

\section{First steps}
\label{prelims}

\subsection{Robust graphs}
We start with some basic observations.
The 
first observation holds by the definition of robustness and considering the subgraph~$G-S$:
 
\begin{observation}\label{minimalityOfG}
For each robust  $G$ with average degree   $d(G)>k-2$ and for each $S\subseteq V(G)$,  the total number of edges having at least one endpoint in $S$  exceeds $\frac{k-2}2 |S|$.
\end{observation}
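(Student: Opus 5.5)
The plan is to compare $G$ with its proper subgraph $G-S$ and let robustness force the inequality. Assume $\emptyset\neq S\subseteq V(G)$; for $S=\emptyset$ the edge count is $0$ and there is nothing to compare with. Write $n=|V(G)|$, $s=|S|$, and let $e_S$ be the number of edges with at least one endpoint in $S$. Then $e(G-S)=e(G)-e_S$. The argument splits into two cases according to whether $G-S$ has any vertices.

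\emph{Case $S=V(G)$.} Here $e_S=e(G)=d(G)n/2>(k-2)n/2=\frac{k-2}{2}|S|$, which is the claim directly.

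\emph{Case $S\subsetneq V(G)$.} Now $G-S$ is a nonempty proper subgraph of $G$. Robustness gives $d(G-S)<d(G)$, that is,
\[
\frac{2(e(G)-e_S)}{n-s}<\frac{2e(G)}{n}.
\]
Rearranging yields $n\,e(G)-n\,e_S<(n-s)e(G)$, so $n\,e_S>s\,e(G)$. Hence
\[
e_S>\frac{s}{n}\,e(G)=\frac{s\,d(G)}{2}>\frac{k-2}{2}\,s.
\]

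The only delicate point is the convention for the average degree of the empty graph, which we sidestep by treating $S=V(G)$ separately as above. No other step is an obstacle.
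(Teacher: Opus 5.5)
Your proof is correct and is the same argument as the paper's, which justifies the observation in one line by applying robustness to the subgraph $G-S$; your rearrangement $n\,e_S>s\,e(G)$ is exactly that computation written out. Treating $S=V(G)$ separately is a sensible clarification the paper leaves implicit. For $S=\emptyset$ the strict inequality would read $0>0$ and so is false, which means the statement must be read for nonempty $S$, as you did.
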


Applying Observation~\ref{minimalityOfG} to a single vertex of $G$ to obtain:
\begin{observation}\label{minDegofG}
The minimum degree of a robust graph
 of average degree exceeding $k-2$ is at least $\lceil \frac{k-1}{2}\rceil$.
 \end{observation}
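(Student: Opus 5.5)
The plan is to apply Observation~\ref{minimalityOfG} with $S=\{v\}$ for an arbitrary vertex $v\in V(G)$. The edges having at least one endpoint in $S$ are exactly the edges incident with $v$, and there are $\deg(v)$ of them. Hence the observation gives $\deg(v) > \frac{k-2}{2}$.

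It remains to turn this strict inequality into the integer bound $\lceil\frac{k-1}{2}\rceil$. Since $\deg(v)$ is an integer, $\deg(v)>\frac{k-2}{2}$ means $\deg(v)\ge \lfloor\frac{k-2}{2}\rfloor+1$. I would check parities separately.
\begin{itemize}
\item If $k$ is even, then $\frac{k-2}{2}$ is an integer, so $\deg(v)\ge \frac{k}{2}=\lceil\frac{k-1}{2}\rceil$.
\item If $k$ is odd, then $\deg(v)\ge \frac{k-3}{2}+1=\frac{k-1}{2}=\lceil\frac{k-1}{2}\rceil$.
\end{itemize}

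For completeness, I would also verify Observation~\ref{minimalityOfG} in this special case directly. The graph $G-v$ is a proper subgraph of $G$. If $G-v$ is empty of vertices (so $|V(G)|=1$), then $d(G)=0$, contradicting $d(G)>k-2\ge 0$ for $k\ge2$; small $k$ is trivial anyway. Otherwise robustness gives $d(G-v)<d(G)$, that is,
\[
\frac{2(e(G)-\deg(v))}{n-1}<\frac{2e(G)}{n}.
\]
This rearranges to $\deg(v)>e(G)/n=d(G)/2>\frac{k-2}{2}$.

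There is no real obstacle here. The only care needed is the integer rounding above and the degenerate one-vertex case.
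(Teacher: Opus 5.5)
Your proof is correct and is exactly the paper's argument: the paper applies Observation~\ref{minimalityOfG} with $S=\{v\}$ to get $\deg(v)>\frac{k-2}{2}$, and your parity check correctly turns this into $\deg(v)\ge\lceil\frac{k-1}{2}\rceil$. The direct check via $d(G-v)<d(G)$ and the one-vertex case are valid but not needed.
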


 Next, we see that a tree with many leaves sharing a parent is easy to embed.

\begin{observation}\label{fewleaves}
Let $T$ be a $k$-vertex tree  having a vertex $v$ adjacent to at least $\frac k2$ leaves. Then $T$ is contained  in each robust graph $G$ of average degree exceeding $k-2$.
\end{observation}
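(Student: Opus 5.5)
The plan is to embed the small part of $T$ greedily using the minimum degree bound of Observation~\ref{minDegofG}, and to place $v$ on a vertex of large degree so that its leaves can be attached at the very end. Let $\ell\ge \frac k2$ be the number of leaves of $T$ adjacent to $v$. Let $T'$ be the tree obtained from $T$ by deleting these $\ell$ leaves. Then $|V(T')|=k-\ell\le \frac k2$.

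\emph{Step 1: choose the image of $v$.} Since $d(G)>k-2$, the maximum degree of $G$ exceeds $k-2$. As degrees are integers, there is a vertex $x$ with $\deg(x)\ge k-1$. We map $v$ to $x$.

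\emph{Step 2: embed $T'$ greedily.} We root $T'$ at $v$ and embed its vertices in breadth-first order, each one into an unused neighbour of the image of its parent $u$. When some vertex of $T'$ is about to be embedded, at most $|V(T')|-1\le \frac k2-1$ vertices are used, and one of them is the image of $u$ itself. Hence at most $\frac k2-2$ used vertices can lie in the neighbourhood of the image of $u$. By Observation~\ref{minDegofG}, that image has degree at least $\lceil\frac{k-1}2\rceil$. So the number of unused neighbours is at least
\[
\Big\lceil\tfrac{k-1}{2}\Big\rceil-\Big(\tfrac k2-2\Big)\;\ge\;\tfrac32\;>\;0 .
\]
Thus the greedy embedding of $T'$ never gets stuck. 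The only care needed is that the count is made relative to $|V(T')|\le k/2$ and not to $k$.

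\emph{Step 3: attach the leaves of $v$.} After Step 2, exactly $k-\ell$ vertices are used, one of which is $x$. So at most $k-\ell-1$ neighbours of $x$ are used. Since $\deg(x)\ge k-1$, at least $(k-1)-(k-\ell-1)=\ell$ neighbours of $x$ are free, and we map the $\ell$ leaves of $v$ to them. This completes the embedding of $T$.

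The count in Step 3 is exactly tight, which is why $x$ must have degree at least $k-1$ and not just the minimum degree. Step 2 is the only place where the hypothesis $\ell\ge k/2$ is used: it makes $T'$ small enough for the minimum degree bound of Observation~\ref{minDegofG} to suffice. Robustness enters only through that observation.
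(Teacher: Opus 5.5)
Your proof is correct and follows the paper's argument: you map $v$ to a vertex of degree at least $k-1$, embed $T-L$ greedily using the minimum degree bound $\lceil\frac{k-1}{2}\rceil$ from Observation~\ref{minDegofG} (enough because $|V(T-L)|\le k/2$), and attach the leaves of $v$ last. Your explicit counts in Steps~2 and~3 just fill in details that the paper's one-line justification leaves implicit.
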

Observation \ref{fewleaves} holds since we can embed $v$ into a maximum degree vertex of $G$ and then embed  $T-L$, where $L$ are the leaves at $v$, greedily (which is possible as $G$, being robust, has minimum degree at least $(k-1)/2$). Finally, we embed $L$.

\subsection{Basic facts about  trees}

The following definition is crucial for splitting any given tree into components whose sizes we can control.
\begin{definition}[$\alpha k$-separator]\label{sepa}
For $\alpha\in [\frac 12,1)$, we call a vertex $t$ of a $k$-vertex tree $T$ an {\em $\alpha k$-separator} for $T$ if each component of $T-t$ has at most $\alpha k$ vertices and all but at most one of these components have at most $(1-\alpha) k$ vertices.
\end{definition}

We will make extensive use of the   folklore observation that an $\alpha k$-separator always exists    (for a reference see e.g.~\cite{HRSW20}).
\begin{lemma}\label{lem:sepa}
For each $\alpha\in [1/2,1)$, each $k$-vertex tree $T$ has a $\alpha k$-separator.
\end{lemma}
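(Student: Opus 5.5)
We prove Lemma~\ref{lem:sepa} by the standard walk argument, which moves toward the large component until none is too big. Fix $\alpha\in[\frac12,1)$ and a $k$-vertex tree $T$. Start at an arbitrary vertex $t_0$. At the current vertex $t$, if every component of $T-t$ has at most $\alpha k$ vertices, we stop. Otherwise there is a unique component $C$ with $|C|>\alpha k$; it is unique because $\alpha\ge\frac12$ and two such components would give more than $k$ vertices. We then move to the unique neighbour $t'$ of $t$ in $C$.

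For termination, note that after the move, the component of $T-t'$ containing $t$ is $V(T)\setminus C$. It has $k-|C|<(1-\alpha)k\le\alpha k$ vertices. Every other component of $T-t'$ lies inside $C\setminus\{t'\}$ and therefore has size at most $|C|-1$. So if $t'$ still has a component larger than $\alpha k$, that component is strictly smaller than $C$, and it does not contain $t$, so the walk never returns. The size of the unique big component strictly decreases at each step, hence the walk terminates at a vertex $t^*$ all of whose components have at most $\alpha k$ vertices. (If the start $t_0$ already satisfies this, we are at this point immediately.)

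It remains to get the second condition: all but at most one component have at most $(1-\alpha)k$ vertices. This is the step requiring care, since a vertex with all components of size at most $\alpha k$ may have two components of size in $((1-\alpha)k,\alpha k]$. Suppose $t^*$ has two components $C_1,C_2$ with $|C_1|,|C_2|>(1-\alpha)k$. We continue the walk in a modified way: move from $t^*$ into the largest component $C_1$, to the neighbour $s$ of $t^*$ in $C_1$.

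In $T-s$, the component containing $t^*$ has $k-|C_1|$ vertices. Since $|C_1|>(1-\alpha)k$, this is less than $\alpha k$, so the first condition is preserved there. The components of $T-s$ inside $C_1$ have fewer than $|C_1|\le\alpha k$ vertices. So $s$ again satisfies the first condition.

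For a potential argument, take $\Phi(t)=$ the number of vertices in components of $T-t$ other than the largest one. Choose, among all vertices satisfying the first condition, one maximising the size of its largest component; equivalently, choose $t$ to maximise $|C_{\max}(t)|$ subject to $|C_{\max}(t)|\le\alpha k$. If this $t$ had a second component larger than $(1-\alpha)k$, move to the neighbour $s$ in the largest component $C_1$. As shown above, $s$ satisfies the first condition, and its component containing $t$ has size $k-|C_1|\ge|C_2|+1>(1-\alpha)k$. Moreover $k-|C_1|>|C_1|$ unless $|C_1|\ge k/2$. We handle this case by comparing: if $k-|C_1|>|C_1|$ we contradict maximality. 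If instead $|C_1|\ge k-|C_1|$, then $|C_1|\ge k/2$, and the components of $T-s$ inside $C_1$ have total size $|C_1|-1$. We then iterate this descent into $C_1$ while tracking the size of the component containing the old vertex. This is the step I expect to need the most care.
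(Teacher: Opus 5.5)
Your first phase is correct: walking into the unique component with more than $\alpha k$ vertices terminates at a vertex whose components all have at most $\alpha k$ vertices. Choosing $t$ to maximise $|C_{\max}(t)|$ among all such vertices is also a good setup. The gap is in the step meant to contradict this maximality, which is the part that proves the second condition of Definition~\ref{sepa}. You step into the \emph{largest} component $C_1$, and this only helps when $k-|C_1|>|C_1|$. The case $|C_1|\ge k/2$ is entirely possible for $\alpha>\frac12$. For example, take $k=100$, $\alpha=0.7$, $|C_1|=60$, $|C_2|=39$, with $C_1-s$ connected. Then the neighbour $s$ of $t$ in $C_1$ has largest component of size $59<60$, so nothing is contradicted. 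Your proposal then ends with an ``iterated descent'' that has no stopping rule and no invariant showing that the final vertex has at most one component with more than $(1-\alpha)k$ vertices. Read as ``always step into a largest component'', it can even oscillate: on a long path with $k$ even, it bounces between the two middle vertices, each of which has two components of size about $k/2>(1-\alpha)k$. As written, the lemma is not proved.

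The missing idea is to step into the \emph{other} large component. Suppose your maximiser $t$ had a component $C_2\neq C_1$ with $|C_2|>(1-\alpha)k$, and let $s'$ be the neighbour of $t$ in $C_2$. The component of $T-s'$ containing $t$ is $V(T)\setminus C_2$. It has $k-|C_2|<\alpha k$ vertices, and it contains $C_1\cup\{t\}$, so it has more than $|C_1|$ vertices. Every other component of $T-s'$ lies in $C_2-s'$, so it has fewer than $|C_2|\le\alpha k$ vertices. Thus $s'$ satisfies the first condition with a strictly larger largest component, contradicting maximality.

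A direct argument avoids both phases. Root $T$ anywhere, write $T_v$ for the subtree below $v$, and take a deepest vertex $t$ with $|T_t|>(1-\alpha)k$. Then every child subtree of $t$ has at most $(1-\alpha)k\le\alpha k$ vertices, and the remaining component has $k-|T_t|<\alpha k$ vertices. The paper itself gives no proof of this lemma; it cites it as folklore.
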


We need a few more  lemmas on trees and separators.

\begin{lemma}
\label{subsetofcompsofT2}
 Let $k\in\mathbb N$ and $b,d\in\mathbb R$  with $b>0$ and $0\le d< \frac b4$. Let $T$ be a $k$-vertex tree, let $t\in V(T)$, and let  
$\mathcal B$ be the set of all nonsingleton components of $T-t$ on at most  $b$ vertices. If $|\bigcup\mathcal B|\ge 2d$, 
 then there is a set  $\mathcal C\subseteq \mathcal B$   such that $|\mathcal C|\le \lceil d\rceil$ and $2d \le |V(\bigcup \mathcal C)|\le b$.
\end{lemma}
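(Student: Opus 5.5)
Since $\mathcal B$ consists of nonsingleton components of size at most $b$, every $C\in\mathcal B$ satisfies $2\le |V(C)|\le b$. Sort the components of $\mathcal B$ by size in non-increasing order, $C_1,C_2,\dots$, and add them greedily until the total first reaches $2d$. The argument then splits according to whether there is a large component.

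\textbf{Case 1: some component has size at least $2d$.} Take $\mathcal C=\{C\}$ for that single component. Then $|\mathcal C|=1\le\lceil d\rceil$; this requires $d>0$, and if $d=0$ any one nonsingleton component works trivially, or $\mathcal C=\emptyset$ if that is allowed. Moreover $2d\le |V(C)|\le b$, so we are done.

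\textbf{Case 2: every component in $\mathcal B$ has fewer than $2d$ vertices.} Let $m$ be the least index with $s_m:=\sum_{i\le m}|V(C_i)|\ge 2d$. Such an $m$ exists because $|\bigcup\mathcal B|\ge 2d$.
\begin{itemize}
\item \emph{Upper bound.} Since $s_{m-1}<2d$ and $|V(C_m)|<2d$, we get $s_m<4d<b$, using $d<b/4$.
\item \emph{Counting.} Each component has at least $2$ vertices, so $s_{m-1}\ge 2(m-1)$. Together with $s_{m-1}<2d$ this gives $m-1<d$, hence $m\le\lceil d\rceil$. If $d$ is an integer, the strict inequality still gives $m-1\le d-1$, so $m\le d$.
\end{itemize}
Setting $\mathcal C=\{C_1,\dots,C_m\}$, we have $|\mathcal C|\le\lceil d\rceil$ and $2d\le |V(\bigcup\mathcal C)|\le b$.

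The sorting is not needed for the bound on the number of components; any order works. The only point requiring care is the upper bound, and it is exactly what the hypothesis $d<b/4$ is for. The main obstacle is simply checking that the counting $m-1<d$ uses the nonsingleton assumption, since each component contributes at least $2$ vertices.
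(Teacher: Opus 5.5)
Your proof is correct and is essentially the paper's argument. The paper sorts the components in decreasing order, takes the shortest prefix reaching $2d$, and rules out overshooting $b$ by noting that otherwise $|C_i|>b-2d\ge b/2>2d>|C_{i-1}|$; this is your two cases (a single component of size at least $2d$, or all components below $2d$ so the prefix stays below $4d<b$) phrased as a contradiction. One small correction: when $d=0$, a single component is not allowed because $\lceil 0\rceil=0$, so you must take $\mathcal C=\emptyset$, which is exactly what the paper does.
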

\begin{proof} 
If $d=0$, take $\mathcal C=\emptyset$. Hence assume $d>0$. 
Order the components of $\mathcal B$ in decreasing order with respect to their size as  $C_1,$ $C_2, \ldots, C_\ell$. 
 Let $i\ge 1$ be minimum with $c:=|V(C_1\cup C_2\cup\ldots\cup C_i)|\ge 2d$.  Then $i \le \lceil d\rceil$  as the $C_j$ all have size at least 2. 
  So we are done unless $c>b$ which implies  $i\ge 2$,
  $|C_i|>b-2d\ge\frac b2$ and $|C_{i-1}|<2d\le\frac b2$, a contradiction to how we ordered~the~$C_i$.
\end{proof}

\begin{remark}\label{theremark}
    Note that the condition $|\bigcup\mathcal B|\ge 2d$ in Lemma~\ref{subsetofcompsofT2} holds if $N(t)$ contains at most $k-2d-1$ leaves and no component of $T-t$ has size exceeding $b$. 
\end{remark}

\begin{lemma}\label{180p}
Let $d, k\in\mathbb N$ satisfy $ d<\frac k{1000}$, let $T$ be a $k$-vertex tree with partition classes $S_1, S_2$ and set of leaves $L$, and let $t\in V(T)$. 
 Suppose  
that (a) $|S_2\cap L|\ge k-\frac k{1000}$,  (b) no component of $T-t$ has more than $\frac k{1000}$ vertices, and (c) $|N(t)\cap L|\le \frac k2$. 
    Then there is  a set $\mathcal C$ of  nontrivial components of $T-t$ such that $|\mathcal C|\le d$ and $100 d
    \le 
    |V(\bigcup \mathcal C)\cap S_2\cap L|
    \le |V(\bigcup \mathcal C)|\le \frac k{4}$.
\end{lemma}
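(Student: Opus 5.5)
The plan is to weight the nontrivial components of $T-t$ by how many leaves of $S_2$ they contain, and then to take the heaviest components one by one until the weight reaches $100d$. For a component $C$ of $T-t$ write $w(C):=|V(C)\cap S_2\cap L|$. Call a vertex \emph{special} if it lies outside $S_2\cap L$. By (a), $T$ has at most $k/1000$ special vertices.

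\emph{Step 1: counting components.} I will show that every nontrivial component $C$ of $T-t$ contains a special vertex. Indeed, $C$ contains an edge. The two ends of an edge lie in different partition classes, so they cannot both be in $S_2$, and hence cannot both be in $S_2\cap L$. It follows that the number $m$ of nontrivial components is at most $k/1000$.

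\emph{Step 2: total weight.} By (a), there are at least $k-k/1000$ vertices in $S_2\cap L$. At most $k/2$ of them are neighbours of $t$, by (c). At most one of them is $t$ itself. Every other vertex of $S_2\cap L$ is not adjacent to $t$, so it lies in a component of $T-t$ of size at least two, that is, in a nontrivial component. Therefore the total weight $W$ of the nontrivial components satisfies
\[W\ge k-\tfrac{k}{1000}-\tfrac k2-1\ge 0.49k.\]
Combined with Step 1, the average weight of a nontrivial component is at least $490$.

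\emph{Step 3: greedy selection.} I order the nontrivial components by decreasing weight as $C_1,\dots,C_m$. I take the least $j$ such that $w(C_1)+\dots+w(C_j)\ge 100d$, and set $\mathcal C:=\{C_1,\dots,C_j\}$. Such a $j$ exists because $W\ge 0.49k>100d$, using $d<k/1000$. To see that $j\le d$, there are two cases.
\begin{itemize}
\item If $m\ge d$: the $d$ heaviest components have average weight at least the overall average, which is at least $490$. Hence $w(C_1)+\dots+w(C_d)\ge 490d$, and so $j\le d$.
\item If $m<d$: then already $W\ge 100d$ is reached using all $m<d$ components, so again $j\le d$.
\end{itemize}

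\emph{Step 4: size bound.} Every vertex of $\bigcup\mathcal C$ is either special or lies in $S_2\cap L$. There are at most $k/1000$ special vertices in total. By the minimality of $j$, the weight of $C_1,\dots,C_{j-1}$ is less than $100d<k/10$. The last component $C_j$ has at most $k/1000$ vertices, by (b). Altogether,
\[|V(\textstyle\bigcup\mathcal C)|\le \tfrac k{10}+\tfrac{k}{1000}+\tfrac{k}{1000}<\tfrac k4.\]

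I expect Step 4 to be the main obstacle. The naive bound of $d$ components, each of size at most $k/1000$, is too weak, because $d$ may be close to $k/1000$. This is why the selection must stop as soon as the weight threshold $100d$ is reached, and why the special vertices are counted globally rather than per component.
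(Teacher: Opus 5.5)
Your proof is correct and follows essentially the same route as the paper. Both arguments bound the number of nontrivial components of $T-t$ by $k/1000$ via their $S_1$-vertices, average the number of vertices of $S_2\cap L$ per component, and use (a) and (b) to get the size bound $k/4$; the only difference is that the paper first picks at most $d$ components of weight at least $498d$ and then discards components until the union has at most $k/4$ vertices (so that, by (b) and (a), at least $k/4-k/500\ge 100d$ of them still lie in $S_2\cap L$), whereas you stop the greedy selection at weight $100d$ and bound the size directly (note that your estimate $W\ge 0.49k$ needs $k\ge 112$, which holds because $d\ge 1$ forces $k>1000$, while $d=0$ is trivial with $\mathcal C=\emptyset$).
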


\begin{proof}
If $d=0$, take $\mathcal C=\emptyset$. Hence we may assume
that $d\ge 1$.
As each nonsingleton component contains at least one vertex from $S_1$,   (a) implies  that $T-t$ has at most $\frac k{1000}$ nonsingleton components.  So, by (a) and (c), these components contain at least $\frac k2-\frac k{500}=\frac{498}{1000}k$ vertices of $S_2\cap L$. Thus on average, each nonsingleton component contains at least $498$ vertices of $S_2\cap L$. 
So we can choose a subset $\mathcal C'$ of at most $d$ of these components such that $|V(\bigcup\mathcal C')\cap S_2\cap L|\ge 498 d$ (note that if there are less than $d$ components we can take all of them). If $|\bigcup\mathcal C'|\le \frac k4$, we are done by taking $\mathcal C=\mathcal C'$, so assume otherwise. We obtain $\mathcal C$ from $\mathcal C'$ by deleting components until $|\bigcup\mathcal C|\le \frac k4$. By (b), $|\bigcup\mathcal C|\ge \frac k4-\frac k{1000}$, and hence by (a), $|V(\bigcup\mathcal C)\cap S_2\cap L|\ge  \frac k4-\frac k{1000}- \frac k{1000}\ge 100 d$. 
\end{proof}

 Only for the next lemma and its applications, we need a quick notation: for a tree~$T$ and root $t$, let $T_O$ be the union of the odd levels and let $T_E$ be the union of its even levels. So  $T_O$ and $T_E$ are the two colour classes of $T$, $t \in T_E$,  and  $V(T)=T_O\cup T_E$.

\begin{lemma}[Odd and even levels lemma]\label{oddandeven}
 Let $T$ be a $k$-vertex tree, let $t$ be a $\frac k2$-separator for $T$ with $d(t)\le 2d$ for some $1\le d<\frac k{10}$, and let $\mathcal C$ be the set of components of $T-t$. 
 Then there is a set $\mathcal C'\subseteq\mathcal C$   such that the union of  $T_O\cap V(\bigcup\mathcal  C')$ and  $T_E\cap V(\bigcup (\mathcal C\setminus\mathcal  C'))$ has size between $d$ and $k-1-d$.
\end{lemma}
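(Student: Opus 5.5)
The plan is an interpolation (discrete intermediate value) argument. For each component $C\in\mathcal C$ set $o(C):=|T_O\cap V(C)|$ and $e(C):=|T_E\cap V(C)|$, so that $o(C)+e(C)=|C|\le \frac k2$, since $t$ is a $\frac k2$-separator. For $\mathcal C'\subseteq\mathcal C$ let
\[
Q(\mathcal C'):=\sum_{C\in\mathcal C'}o(C)+\sum_{C\in\mathcal C\setminus\mathcal C'}e(C),
\]
which is exactly the size of the union in the statement. We must find $\mathcal C'$ with $d\le Q(\mathcal C')\le k-1-d$.

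First evaluate the two extremes. Every vertex other than $t$ lies in exactly one component, and $t\in T_E$. Hence $Q(\emptyset)=|T_E|-1$ and $Q(\mathcal C)=|T_O|$. These add up to $Q(\emptyset)+Q(\mathcal C)=k-1$, so the two extremes are placed symmetrically around the window $[d,k-1-d]$. If $Q(\emptyset)$ already lies in the window we are done. Otherwise, by the symmetry, we may assume $Q(\emptyset)\le d-1$; the case $Q(\emptyset)\ge k-d$ is symmetric, with the roles of the two extremes exchanged. Then $Q(\mathcal C)=k-1-Q(\emptyset)\ge k-d$.

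Next, enumerate $\mathcal C=\{C_1,\dots,C_m\}$ and let $Q_j:=Q(\{C_1,\dots,C_j\})$. Then $Q_0=Q(\emptyset)$, $Q_m=Q(\mathcal C)$, and each step moves one component from the ``even'' side to the ``odd'' side, so $|Q_j-Q_{j-1}|=|o(C_j)-e(C_j)|\le |C_j|\le \frac k2$. Let $j$ be the first index with $Q_j\ge d$; it exists because $Q_m\ge k-d\ge d$. If $Q_j\le k-1-d$, then $\mathcal C'=\{C_1,\dots,C_j\}$ works. Otherwise $Q_{j-1}\le d-1$ and $Q_j\ge k-d$, so the step has size at least $k-2d+1$. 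Since $d<\frac k{10}$, we have $k-2d+1>\frac{4k}{5}>\frac k2$, contradicting the step bound.

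The only delicate point is checking that the window $[d,k-1-d]$ is wider than the maximal step size $\frac k2$. This is exactly where the separator property (components of size at most $\frac k2$) and the bound $d<\frac k{10}$ are used. The hypothesis $d(t)\le 2d$ does not appear to be needed in this argument; presumably it matters only in the applications. There are also no degenerate small cases: $1\le d<\frac k{10}$ forces $k>10$, so the inequality $k-2d+1>\frac k2$ holds with room to spare.
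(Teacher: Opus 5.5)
Your proof is correct, and it takes a different route from the paper. The paper first applies Lemma~\ref{subsetofcompsofT2} with $b=\frac k2$ to get a set $\mathcal D$ of components with $2d\le|V(\bigcup\mathcal D)|\le\frac k2$. Via Remark~\ref{theremark}, this is the only place the hypothesis $d(t)\le 2d$ enters: it guarantees that the nonsingleton components carry enough vertices. The paper then uses pigeonhole to find a colour class containing at least $d$ vertices of $\bigcup\mathcal D$, and one containing at least $d$ vertices of $\bigcup(\mathcal C\setminus\mathcal D)$. It takes $\mathcal C'=\mathcal C$ if these classes differ and $\mathcal C'=\mathcal D$ if they coincide.

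Your discrete intermediate-value argument avoids searching for a medium-sized $\mathcal D$. It uses only two facts: $Q(\emptyset)+Q(\mathcal C)=k-1$, and adding one component changes $Q$ by at most that component's size, hence by at most $\frac k2$. This buys three things:
\begin{itemize}
\item It is self-contained and needs no earlier lemma.
\item It confirms that $d(t)\le 2d$ is superfluous. For a star centred at $t$, Lemma~\ref{subsetofcompsofT2} has nothing to work with, yet the conclusion holds and your argument covers it.
\item It works for $d$ up to roughly $\frac k4$, rather than the $d<\frac k8$ that Lemma~\ref{subsetofcompsofT2} requires with $b=\frac k2$.
\end{itemize}
The paper's version is shorter only because it recycles a lemma it needs elsewhere anyway.

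One small adjustment: $d$ is not assumed to be an integer, and the lemma is later applied with $d=\mu k$. So phrase the boundary cases as $Q_{j-1}<d$ and $Q_j>k-1-d$ rather than $Q_{j-1}\le d-1$ and $Q_j\ge k-d$. The jump is then larger than $k-1-2d>\frac{4k}{5}-1\ge\frac k2$, which gives the same contradiction.
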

\begin{proof}
By Remark~\ref{theremark} and since $d(t)\le 2d\le \frac k2-2d-1$, we can apply
Lemma~\ref{subsetofcompsofT2} 
 with $b=\frac k2$ and $d$. This gives a set   $\mathcal D\subseteq \mathcal C$ 
with 
$2d \le |V(\bigcup \mathcal D)|\le k/2$. Observe that some  $T_1\in \{ T_E, T_O\}$ contains at least $d$ vertices of $\bigcup\mathcal D$, and some  $T_2\in \{ T_E, T_O\}$ contains at least~$d$ vertices of $\bigcup(\mathcal C\setminus \mathcal D)$. If $T_1\neq T_2$ we  set $\mathcal C':=\mathcal C$, and if $T_1= T_2$ we  set~$\mathcal C':=\mathcal D$.
\end{proof}

\subsection{Two useful definitions}

\begin{definition}\label{topdown}
Embed a tree $T$ in a graph $G$ in a {\em top-down fashion} means that we start the embedding at the given root of $T$  and then  embed the other vertices successively, at each step embedding one whose parent has already been embedded.
\end{definition}
Definition~\ref{topdown} is only meant to establish the order in which we process the vertices of $T$. Exact instructions on where to embed each vertex of $T$ will be given in the proofs.

\begin{definition}\label{def:VmS}
Let $H$ be a subgraph of  a graph $G$.
For a subset $S$ of $V(G)$ and for $m\in\mathbb N$, let $V_H(m,S)$ be the set of all vertices of $H$ having degree at least $m$ into $S$.
\end{definition}

\section{Reduction of Theorem~\ref{thm:realmain} to Theorem \ref{thm:main}}\label{sec:redu}

In this section we will show that Theorem \ref{thm:realmain} follows from Theorem~\ref{thm:main}.
For this, 
let $\mu$ and $k_0$ be given by Theorem~\ref{thm:main}. Replacing $k_0$ by a larger integer
if necessary, we may assume that, for every $s\ge k_0$,
$(\frac{\mu^{15}s^2}{10^4})^2
e^{-\mu^{12}s/8}<\frac{\mu^6}{2}.$ Define $\mu'$  for Theorem \ref{thm:realmain}  as 
 $$\mu':=\min\Big\{\frac{\mu^{15}}{10^4}, k_{0}^{-1}, \gamma,\frac12\Big\}$$ where~$\gamma$ is given by Theorem~\ref{thmHRSW}. 
The premises of Theorem \ref{thm:realmain} give a $k$-vertex tree~$T$, a robust graph~$G$  of average degree  $d(G)>k-2$, and a subgraph $H'$ of $G$ with 
$\delta(H')\ge (1-\mu')k$. 
If $k<k_0$, then  $\delta(H')\ge (1-\mu')k\ge k- k_{0}^{-1}k> k-1$ and we can greedily embed~$T$  in~$H'$. 
 So we can assume $k\ge k_0$.
We will show that either $T\subseteq G$   or  $G$ contains a subgraph~$H$   as in Theorem~\ref{thm:main}.

Consider any $w \in V(H')$ and write $N(w)$ for its neighbourhood in $H'$. We may assume that 
\begin{equation}\label{Nw}
|N(w)|<k-1, \ \text{and thus} \ \Delta(H')<k-1,
\end{equation}
as otherwise  Theorem~\ref{thmHRSW} 
ensures that $H'$ contains $T$.
Let $S_w:=V_{H'}\big((1-\frac{\mu^2}{8})k,\, N(w)\big)$.
If $S_w\setminus N(w)$ contains a set $S'_w$ of size $(1-\frac{3\mu}{4})k$, then
let $N'\subseteq N(w)$ contain all but the $\mu k/2$ vertices of lowest degree into $S'_w$.
We claim that $N'$ has minimum degree at least $(1-\mu)k$ into~$S'_w$.
Indeed, otherwise some vertex of $N'$ has fewer than $(1-\mu)k$ neighbours in~$S'_w$, and hence
each of the $\mu k/2$ deleted vertices has fewer than $(1-\mu)k$ neighbours in $S'_w$.
Thus the number $e(\overline{H'}[N(w),S'_w])$ of missing edges between $N(w)$ and $S'_w$ is at least
\[
\frac{\mu k}{2}\big(|S'_w|-(1-\mu)k\big)=\frac{\mu k}{2}\cdot \frac{\mu k}{4}=\frac{\mu^2k^2}{8}.
\]
On the other hand, by the definition of $S_w$, every vertex of $S'_w$ has at least
$(1-\frac{\mu^2}{8})k$ neighbours in $N(w)$, and since $|N(w)|<k$ this gives
\[
e(\overline{H'}[N(w),S'_w])\le |S'_w|\cdot \frac{\mu^2k}{8}<\frac{\mu^2k^2}{8},
\]
a contradiction. Hence $\delta(N',S'_w)\ge (1-\mu)k$.
Moreover, each vertex of $S'_w$ has degree at least
$(1-\frac{\mu^2}{8})k-\mu k/2\ge (1-\mu)k$ into $N'$.
 As the bipartite subgraph of $H'$ with vertex classes $N'$ and $S'_w$
and edge set $E_{H'}(N',S'_w)$
also has at most $(2+2\mu)k$ vertices, we see that it is as  $H$ in Theorem~\ref{thm:main} (ii). 
 
 Similarly, if 
 $|S_w\cap N(w)|\ge (1-3\mu/4)k$, then 
$|N(w)\setminus S_w|<k-1-(1-\frac 34\mu)k < \frac 34\mu k$ by~\eqref{Nw}, 
and thus
$G[S_w\cap N(w)]$ has minimum degree at least 
 $(1-\frac{\mu^2}8)k-|N(w)\setminus S_w|\ge(1-\mu)k.$ 
 So by~\eqref{Nw}, $G[S_w\cap N(w)]$ is as  $H$ in Theorem~\ref{thm:main} (i).

 Thus we can assume that for each $w \in V(H')$ we have
\begin{equation}\label{Swmax}
\max\{|S_w\cap N(w)|, |S_w\setminus N(w)|\}< (1-\frac{3\mu}4)k.
\end{equation}

Furthermore, for each $w \in V(H')$, we may assume that
 \begin{equation}\label{Swmin}
\min\{|S_w\cap N(w)|, |S_w\setminus N(w)|\}< \frac{\mu k}2,
\end{equation}
as otherwise, by the definition of $S_w$, each of the at least $\mu k/2$ vertices of $S_w\setminus N(w)$ sees at least half of the at least $\mu k/2$ vertices of $S_w\cap N(w)$. Thus $S_w\cap N(w)$ must contain a vertex $v$ that sees at least half of $S_w\setminus N(w)$. That is, $v$ has at least $\mu k/4$ neighbours outside $N(w)$. So since $v\in S_w$, it has degree at least 
$(1-\mu^2/8)k + \mu k/4\ge $
$k-1$ in $H'$, which is impossible by~\eqref{Nw}.

Together, \eqref{Swmax} and \eqref{Swmin} imply that for each $w \in V(H')$, we have
\begin{equation}\label{Swsmall}
   |S_w| \le (1-\frac{\mu}4)k. 
\end{equation}

We root $T$ at a  vertex  $t_0$  with the most leaf children. 
For each non-singleton component of $T-t_0$ we order its vertices depth-first, starting at its root, so that for each vertex $v\neq t_0$, its parent appears before $v$ in the ordering, and all  leaf children of $v$ appear directly after $v$ in the ordering.  This gives a (not uniquely defined) ordering of the vertices of each nonsingleton component of $T-t_0$. We order  the nonsingleton components of $T-t_0$ nondecreasingly according to their size, and concatenate the orderings of their vertices. Adding the leaves of $T-t_0$ to the end, we obtain an ordering $t_1, t_2, \ldots, t_{k-1}$ of   $V(T-t_0)$. 
We will embed $V(T-t_0)$ in this order, in the following way.

   We embed $t_0$ in arbitrary vertex $w_0$ of $H'$ and successively embed the vertices $t_i$ of $T-t_0$ in $H'$ as follows until step $$\ell:=\mu^2 k/10.$$ In each step $i\le\ell$ 
   we choose
    the image of $t_i$  randomly among all unused neighbours $w_i$ of the image of the parent of $t_i$.

    Let  $U$ be the set of all vertices of $H'$ that were used in these first $\ell$ steps, and let $U'$ be the set of all vertices of $H'$ that were used in these first $$\ell':=\mu' k$$ steps of our embedding. As usual, let $N(U')=N_{H'}(U')$ be the set of all vertices of $H'$ (including those from $U'$) having at least one neighbour in $U'$. Note that 
    \begin{equation}\label{NU}
        |N(U')|\le \ell'\Delta(H')< \mu'k^2
    \end{equation}
     by~\eqref{Nw}.
  Call a vertex $w\in N(U')$ {\it endangered} if  $|U\cap N(w)|>\ell-\mu'k$.
Intuitively, $w$ is endangered if after the first $\ell$ random steps we have used almost all of $N(w)$,
so in the last $\mu'k$ steps we might run out of neighbours of $w$ unless we have embedded many vertices outside $N(w)$ earlier.

\begin{claim}\label{aboutthepairs}
    The probability $p^*$ that there are distinct $v,w\in N(U')$ with $|N(v) \cap N(w)| < (1-\mu^8 )k$ which are both endangered at step $\ell$ satisfies
     $p^*<\mu^6/2$.
\end{claim}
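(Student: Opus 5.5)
The plan is a union bound over pairs, after conditioning on the first $\ell'$ steps of the random embedding. Once $t_1,\dots,t_{\ell'}$ have been embedded, the sets $U'$ and $N(U')$ are fixed. By~\eqref{NU}, $N(U')$ contains fewer than $(\mu'k^2)^2\le(\frac{\mu^{15}k^2}{10^4})^2$ ordered pairs $(v,w)$. So it suffices to show the following: for every history of the first $\ell'$ steps, and every fixed pair $v,w\in N(U')$ with $|N(v)\cap N(w)|<(1-\mu^8)k$, the probability that both are endangered at step $\ell$ is at most $e^{-\mu^{12}k/8}$. The choice of $k_0$ made at the start of this section, namely $(\frac{\mu^{15}s^2}{10^4})^2e^{-\mu^{12}s/8}<\frac{\mu^6}{2}$ for all $s\ge k_0$, then gives $p^*<\mu^6/2$ after averaging over histories.

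Fix such a pair and write $I:=N(v)\cap N(w)$. If both $v$ and $w$ are endangered, then $|U\cap N(v)|$ and $|U\cap N(w)|$ each exceed $\ell-\mu'k$. Since $|U|=\ell$, this gives $|U\cap I|>\ell-2\mu'k$, so fewer than $2\mu'k$ of the images $w_1,\dots,w_\ell$ lie outside $I$. In particular, among the steps $i\in(\ell',\ell]$, fewer than $2\mu'k$ place $t_i$ outside $I$.

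Next I would bound, step by step, the probability of landing outside $I$. Consider a step $i\in(\ell',\ell]$ at which fewer than $2\mu'k$ previous images lie outside $I$, and let $x$ be the image of the parent of $t_i$. Since $\deg_{H'}(x)\ge(1-\mu')k$ and $|I|<(1-\mu^8)k$, the set $N(x)\setminus I$ has at least $(\mu^8-\mu')k$ vertices. Fewer than $2\mu'k$ of them are used, so at least $(\mu^8-3\mu')k\ge\mu^8k/2$ unused vertices of $N(x)\setminus I$ remain, using $\mu'\le\mu^{15}/10^4$. The image of $t_i$ is uniform among at most $\Delta(H')<k$ unused neighbours of $x$, by~\eqref{Nw}. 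Hence, conditional on the history, it lands outside $I$ with probability at least $\mu^8/2$. (The depth-first ordering guarantees the parent is already embedded, which is all we need.)

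By a standard coupling, the number of steps in $(\ell',\ell]$ landing outside $I$, stopped once it reaches $2\mu'k$, stochastically dominates $\mathrm{Bin}(\ell-\ell',\mu^8/2)$ truncated at the same threshold. Here $\ell-\ell'\ge\mu^2k/20$, so the binomial has mean at least $\mu^{10}k/40$, which is far larger than $2\mu'k\le\mu^{15}k/5000$. A Chernoff lower-tail bound then gives probability at most $\exp(-\frac{1}{2}(1-o(1))^2\mu^{10}k/40)\le e^{-\mu^{12}k/8}$ that fewer than $2\mu'k$ steps land outside $I$, for $\mu$ small. This is the required bound for a fixed pair.

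The main obstacle is the dependence in the process: the per-step probability bound is valid only while the "outside" count is still small, and the pair set $N(U')$ is itself random. I handle the first by the stopped-process domination just described, and the second by conditioning on the first $\ell'$ steps, which fixes $N(U')$ before the union bound is taken.
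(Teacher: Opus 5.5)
Your proof is correct and follows essentially the same route as the paper. You condition on the first $\ell'$ steps to fix $N(U')$, note that two endangered vertices force fewer than $2\mu'k$ images outside $N(v)\cap N(w)$, dominate the count of such images by a binomial, apply Chernoff to get $e^{-\mu^{12}k/8}$, and take a union bound over at most $(\mu'k^2)^2$ pairs using the choice of $k_0$; the only difference is that you use per-step success probability $\mu^8/2$ where the paper uses $\mu^9$, which affects nothing but constants.
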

      \begin{proof}
Condition on the first $\ell'$ steps of the embedding, and let us
estimate the probability $p_{v,w}$ that fixed $v,w\in N(U')$  with  $|N(v) \cap N(w)| < (1-\mu^8)k$ are  both   endangered.
For this, observe that if both $v$ and $w$ are  endangered, then  $|U\setminus N(v)  |< \mu' k$  and  $|U\setminus N(w)  |< \mu' k$ and therefore $$|U\setminus ( N(v) \cap N(w))|< 2\mu' k.$$ 
Our hypotheses imply that  for any vertex $x$,
$$|N(x)\setminus  ( N(v) \cap N(w))|>
(1-\mu')k-|N(v)\cap N(w)|
\ge 2\mu'k+ \mu^9k.$$
 So, letting $X_j$ be i.i.d.~random variables  each of which is 1 with probability~$\mu^9$    and 0 otherwise, 
 the preceding
bound and~\eqref{Nw} imply by stochastic domination that
$p_{v,w}$ is bounded from above by the probability that $
\sum_{\ell'<j\le\ell}X_j<2\mu'k$.
(Indeed, as long as fewer than $2\mu'k$ vertices have been
embedded outside $N(v)\cap N(w)$, the conditional probability
that the next vertex is embedded outside this intersection is at
least $\mu^9$.) Moreover,
$\mathbb E[\sum_{\ell'<j\le\ell}X_j]
=\mu^9(\ell-\ell')
\ge\mu^{12}k
\ge100\mu'k$
 and hence  $$p_{v,w}\ \le \ \mathbb P\big [\sum_{\ell'< j\le \ell} X_j< 2\mu' k   \big] \ \le \        e^{-\mu^{12}k/8},$$  
 by an easy application of Chernoff's inequality.    Thus, by~\eqref{NU},
\[
p^*
\le
(\mu'k^2)^2e^{-\frac{\mu^{12}k}{8}}
\le
(\frac{\mu^{15}k^2}{10^4})^2
e^{-\frac{\mu^{12}k}{8}}
<\frac{\mu^6}{2},
\]
where the last inequality follows from our choice of $k_0$.
      \end{proof}

\begin{claim}\label{w0notendangered} 
    The probability $p^*_0$ that $w_0$ is endangered satisfies $p^*_0\le 1-\mu^6$.
\end{claim}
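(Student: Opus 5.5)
The plan is to show that with probability at least $\mu^6$ at least $\mu'k+1$ of the vertices embedded in the first $\ell$ steps land outside $N(w_0)$, so that $w_0$ is not endangered. The key input is \eqref{Swsmall}, namely $|S_{w_0}|\le(1-\frac\mu4)k$, together with $\delta(H')\ge(1-\mu')k$ and \eqref{Nw}.

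\emph{Reducing to nonsingleton components.} By Observation~\ref{fewleaves} we may assume that $t_0$ has fewer than $k/2$ leaf children. Hence the nonsingleton components of $T-t_0$ contain more than $k/2-1>\ell$ vertices, so each of $t_1,\dots,t_\ell$ lies in a nonsingleton component. Every nonsingleton component has at least two vertices but only one child of $t_0$. Therefore at least about $\ell/2$ of the steps $j\le\ell$ embed a vertex $t_j$ whose parent $p\neq t_0$ was itself embedded at an earlier step $i\ge1$.

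\emph{Two one-step estimates.} Both hold conditionally on any history.
\begin{itemize}
\item At every step $i\le\ell$, the image $w_i$ is uniform among the unused neighbours of some vertex $x$. There are at least $(1-\mu')k-\ell$ of these and at most $k-1$ by \eqref{Nw}. Since $|S_{w_0}|\le(1-\frac\mu4)k$, the probability that $w_i\notin S_{w_0}$ is at least $\mu/5$.
\item If a parent's image $x\neq w_0$ lies outside $S_{w_0}$, then $x$ has fewer than $(1-\frac{\mu^2}8)k$ neighbours in $N(w_0)$. So it has more than $\frac{\mu^2}8k-\mu'k$ neighbours outside $N(w_0)$, and at least $(\frac18-\frac1{10})\mu^2k-\mu'k\ge\mu^2k/50$ of them are unused. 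Hence each child of that parent is embedded outside $N(w_0)$ with conditional probability at least $\mu^2/50$.
\end{itemize}

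\emph{Combining the estimates.} Let $Z$ be the number of steps $j\le\ell$ such that the parent of $t_j$ is not $t_0$ and its image lies outside $S_{w_0}$. Each such parent was placed outside $S_{w_0}$ with probability at least $\mu/5$, whatever happened before. Summing over the at least $\ell/2$ relevant steps, with each parent weighted by its number of children among $t_1,\dots,t_\ell$, gives $\mathbb E[Z]\ge\mu\ell/10$. Since $Z\le\ell$, a reverse Markov argument gives $\mathbb P[Z\ge\mu\ell/20]\ge\mu/20$.

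On the event $Z\ge\mu\ell/20=\mu^3k/200$, apply the second estimate. By stochastic domination by i.i.d.\ Bernoulli$(\mu^2/50)$ variables, exactly as in the proof of Claim~\ref{aboutthepairs}, the number of vertices placed outside $N(w_0)$ is at least a binomial variable with mean at least $\mu^5k/10^4\ge 100\mu'k$, since $\mu'\le\mu^{15}/10^4$. Chernoff's inequality then makes the probability of fewer than $\mu'k+1$ successes at most $e^{-\Omega(\mu^5k)}$, which is negligible for $k\ge k_0$.

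Hence $w_0$ fails to be endangered with probability at least $\mu/20-o(1)\ge\mu^6$, which gives $p^*_0\le1-\mu^6$.

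\emph{Main obstacle.} The delicate point is the dependence structure. Whether a parent lies outside $S_{w_0}$, and whether its children leave $N(w_0)$, are adaptively dependent on the earlier choices. This is why I would phrase both estimates as conditional lower bounds valid for every history. I would then use a supermartingale (or stochastic domination) argument rather than independence. One must also check that the depth-first ordering does not place the children of a parent before the parent itself, which holds by construction of the ordering $t_1,\dots,t_{k-1}$.
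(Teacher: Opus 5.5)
Your proof is correct. It shares the paper's ingredients: the same restriction to non-root vertices of nonsingleton components of $T-t_0$ (the paper uses at least $\ell/3$ of them; you get at least $(\ell-1)/2$), and the same two one-step bounds, $\mu/5$ and $\mu^2/50$, which hold for every history and come from \eqref{Swsmall} and the definition of $S_{w_0}$.

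You differ from the paper only in how you combine these bounds.
\begin{itemize}
\item The paper multiplies the two conditional bounds via the tower property. This gives $\mathbb E\big[|\{j\le\ell: w_j\notin N(w_0)\}|\big]\ge\frac{\ell}{3}\cdot\frac{\mu}{5}\cdot\frac{\mu^2}{50}\ge\mu^4\ell$. It then applies Markov's inequality once to $|U\cap N(w_0)|$, giving $\frac{(1-\mu^4)\ell}{\ell-\mu'k}\le 1-\mu^6$.
\item You apply reverse Markov to the number $Z$ of steps with a parent outside $S_{w_0}$. You then couple the children with i.i.d.\ Bernoulli$(\mu^2/50)$ variables and finish with Chernoff.
\end{itemize}

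Your route gives a much stronger conclusion: $w_0$ is not endangered with probability about $\mu/20$ rather than $\mu^6$. The stronger bound is not needed, since Claim~\ref{aboutthepairs} only has to be beaten at the level $\mu^6$. Its cost is an adaptive domination argument plus an extra largeness condition $e^{-\Omega(\mu^5 k)}\le \mu/20-\mu^6$ on $k$, which is harmless for $k\ge k_0$.

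The dependency issue you single out as the main obstacle does not arise in the paper's route, because only linearity of expectation is used.

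If you keep your route, state the second stage without conditioning. Via the coupling, bound $\mathbb P\big[Z\ge m \text{ and at most } \mu'k \text{ children land outside } N(w_0)\big]\le \mathbb P\big[\mathrm{Bin}(m,\mu^2/50)\le\mu'k\big]$. Conditioning on $\{Z\ge m\}$ instead would bias the placements of the children, which are themselves parents of later vertices.
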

\begin{proof}
    Note that Observation~\ref{fewleaves} implies that  $T-t_0$ has at most ${k}/{2}$ singleton components.  By our choice of $\ell$ and of the ordering of the vertices of $T$, the vertices $v_1\ldots, v_{\ell}$ do not meet these singleton components. Moreover, at least $\ell/3$ of these vertices are not roots
of components of $T-t_0$. 
  By~\eqref{Nw} and~\eqref{Swsmall}, for each such non-root vertex $v$, the probability that its parent
is not embedded in $S_{w_0}$ is at least
    $$\frac{(1-\mu' )k-|S_{w_0}|-\ell}k
    \ge \frac \mu 4-\mu'-\frac{\mu^2}{10}
    \ge \frac\mu 5.$$ 
    Furthermore, if $v$'s parent is not embedded in $S_{w_0}$, then by the definition of the set  $S_{w_0}$,  the probability $v$ is not embedded in $N(w_0)$ is at least $$\frac{(1-\mu')k-(1-\frac{\mu^2}8)k-\ell}k\ge  \frac{\mu^2}{50}.$$
    
     Thus, the expected number of vertices from $U$  not embedded into $N(w_0)$ is at~least $\frac{\ell}{3}\cdot\frac{\mu}{5}\cdot\frac{\mu^2}{50}
\ge \mu^4\ell$. So the expected number of vertices  embedded into $N(w_0)$ is at most $\ell-\mu^4\ell$. 
Hence by Markov's inequality, the probability that more than $\ell - \mu' k$ vertices were embedded in  $N(w_0)$ is at most 
$\frac{\ell-\mu^4\ell}{\ell - \mu' k}\le 1-\mu^6,$
which proves the claim.
\end{proof}

      By Claims~\ref{aboutthepairs} and~\ref{w0notendangered}, there is an embedding of $T':=T[\{t_0,t_1,\ldots, t_{\ell}\}]$  such that
      \begin{enumerate}[(a)]
      \item $w_0$ is not endangered and
      \item\label{b}  if $v,w$ are endangered then $|N(v) \cap N(w)| \ge (1-\mu^8)k$. 
       \end{enumerate}
      We will from now on fix such an embedding of $T'$.
      Note that we may assume that
\begin{equation}\label{nomukleaves}
\text{no vertex of $T$   is adjacent to $\mu' k$ or more leaves,}
\end{equation}
as otherwise our choice of $t_0$ implies that 
$t_0$ is adjacent to at least $\mu' k$ leaves. We
  can embed the remainder of  $T$ except for these $\mu' k$ leaves greedily into~$H'$, and finish by embedding these leaves, which is possible by~(a).

Moreover,
      if $H'$ has no endangered vertices then  the high minimum degree of $H'$ allows us to complete the embedding of $T$ in $H'$ greedily. 
      Also, if for each endangered vertex $w$ we can ensure that before we embed the last $\mu' k$ vertices of $T$, we have embedded at least $\mu' k$ vertices outside of $N(w)$, then we can complete the embedding of $T$ in~$H'$ greedily.
This is what we will do in the remainder of the proof. We will embed the  vertices $v_{\ell+1},\ldots,v_k$ in this order, at each step randomly choosing an image uniformly among the unused neighbours of the image of the parent, as before. It only remains to verify that after step $k-\mu' k-1$, with positive probability we have that for each endangered $w\in V(H')$
\begin{equation}\label{foreachwweembed}
\text{at least $\mu' k$ vertices were embedded outside of $N(w)$.}
\end{equation}

For this, first note that our ordering of the vertices of $T$  ensures that there is at most one vertex $t\in V(T')\setminus\{t_0\}$ such that $T-T'$ contains leaf children of $t$. 
So, by~\eqref{nomukleaves}, none of the vertices in $\{t_{\ell+\mu' k},\ldots, t_{k-\mu' k}\}$ are leaf children of any vertex from $T'$, and furthermore, at least half of these vertices  are not  children  of   vertices from $T'$, which means that their parents lie in $T-T'$. Letting $C$ denote  the set of all vertices of $T-T'$ with parents in $T-T'$, we deduce that $$|C|\ge \frac{|T-T'|}3 \ge \frac k4.$$

Consider a fixed endangered vertex ${w^*}$. 
 Similarly as in the proof of Claim~\ref{w0notendangered} we see that  for each of the first $\mu^2k/1000$ vertices of $C$, which by our ordering are embedded before step $\ell+\ell'+2\mu^2k/1000+1<\mu^2k/9$, the probability that its parent $p$ is not embedded
    in~$S_{w^*}$ is at least $\mu/5$ by~\eqref{Nw} and~\eqref{Swsmall}, and if $p$ is not embedded
    in $S_{w^*}$, then the probability $c$ is not embedded in $N({w^*})$ is at least~$\mu^2/100$. 
     Thus the expected number of vertices from $C$ embedded outside $N({w^*})$ is at least $\mu^5k/10^6$, and the expected number of vertices from $T-T'$ embedded  in $N({w^*})$ is at most $|T-T'|-\mu^5k/10^6$. 
     Then 
     by Markov's inequality, the probability that more than $|T-T'| - 2\mu^8k$ vertices of $T-T'$ were embedded in  $N({w^*})$ is at most 
$$\frac{|T-T'|-\mu^5k/10^6}{|T-T'| - 2\mu^8 k}<1.$$ 

Thus with positive probability 
\begin{equation}\label{atleast200}
\text{at least $2\mu^8 k $ vertices were embedded outside $N({w^*})$, }
\end{equation}
which we  assume from now on. In particular, \eqref{foreachwweembed} holds for $w^*$. Now consider any other endangered vertex $w$. By~\eqref{b}, we know that $|N(w) \cap N(w^*)| \ge (1-\mu^8)k$ and thus by~\eqref{Nw}, $|N(w) \setminus N(w^*)|\le \mu^8k$. So, by~\eqref{atleast200}, $\mu^8 k > 2\mu' k $ vertices were embedded outside $N({w})$, which proves~\eqref{foreachwweembed} for~$w$. This finishes the embedding of $T$ in $H'$.

\section{Proof of Theorem \ref{thm:main}}
\label{mainproof6}
Constants $\mu$ and $k_0$ are chosen as follows. Let $\mu:= \min\{10^{-10}, \gamma, \delta^{10}\}$, where $\gamma$ is from Theorem~\ref{thmHRSW} and $\delta$ is from Theorem~\ref{thmRS25}. Set $k_0:=\max\{10^{100}\mu^{-1}
,k_0', k_0''\}$ where $k_0'$ is from Theorem~\ref{thmRS} and $k_0''$ is from Theorem~\ref{thmRS25}. We will work with $k\ge k_0$.
We split the proof into two parts, according to the type of subgraph~$H$ the graph $G$ contains.

\subsection{Bipartite $H$}\label{sec:balbip}
In this section we prove Theorem \ref{thm:main} for subgraphs $H$ as in Theorem \ref{thm:main} (ii). That is, we are given a robust graph $G$ of average degree exceeding $k-2$ and a bipartite subgraph $H$ with $|V(H)|
    \le (2+2\mu)k$ and $\delta(H)\ge (1-\mu)k$.  Let $A_1$, $A_2$ be the two sides of $H$.
  We have to embed in $G$ a given $k$-vertex tree $T$, with sides $S_1$ and $S_2$. Set $s_i=|S_i|$ for $i=1,2$. We can assume that $s_1\le s_2$.  For contradiction,  assume $T$ cannot be embedded in $G$.
    
In particular, we cannot embed  $T$ greedily in a top-down manner, and thus,  
\begin{equation}\label{s1}
s_1 \le \mu k.
\end{equation}
Let $L_2$ be the set of all leaves of $T$ in $S_2$.   By~\eqref{s1}, and since (rooting $T$ at any vertex of $S_2$,) each non-leaf in $S_2$ has a child in $S_1$,
\begin{equation}\label{T'andL2}
\text{$\Delta(T-L_2)\le s_1\le \mu k$  and $|S_2\setminus L_2|\le s_1\le \mu k$}. 
\end{equation}
 
Suppose $T$ has a vertex $x$ of degree at least $k/2+s_1$. Then $x\in S_1$, and by~\eqref{T'andL2} there are at least $d(x)-s_1\ge k/2$ leaves adjacent to $x$ and we are done by Observation~\ref{fewleaves}.  
Therefore, we may assume that
\begin{equation}\label{DeltaT}
\Delta(T)< \frac k2+s_1\le \frac k2+\mu k,
\end{equation}
where we used~\eqref{s1} for the inequality.
We let $t$ and $t'$ be the $\frac k2$-separator and the $\frac 34k$-separator for $T$ from Definition~\ref{sepa}. Observe that since each non-singleton component contains at least one vertex of $S_1$, it follows that 
\begin{equation}\label{T-tfewcomps}
\text{
 $T-t$ and $T-t'$ each have at most $s_1$ non-singleton components.}
 \end{equation}

Let us now turn to $G$. 
Since $\delta(H)\ge (1-\mu)k$, we know that for $i=1,2$, 
\begin{equation}\label{Ai1+3}
|A_i|\le (1+3\mu )k, 
\end{equation}
and so each vertex in $A_i$ 
sees all but at most $4\mu k$ vertices of $A_{3-i}$. 
We claim that for $i=1,2$, $A_i$ contains 
at most $5\mu k$ vertices of degree  at least $k-1$ in~$G$, that is
\begin{equation}\label{highAi}
|V_{A_i}(k-1, V(G))|\le 5\mu k.
\end{equation}
Assume this is not the case for some $i$. 
We can then embed $T-L_2$  in the bipartite graph $H$ greedily, mapping $S_1$  to $V_{A_i}(k-1, V(G))$  and mapping  $S_2\setminus L_2$ to $A_{3-i}$.  We finish by embedding $L_2$ greedily. 
So~\eqref{highAi} holds. 

By (\ref{highAi}), we know that the number of edges between $H-V_{H}(k-1, V(G))$ and $$X:=V_{G-H}(k/8,V(H))$$ is bounded from below by
$$|X|\cdot (k/8-10\mu k)\ge |X|\cdot k/9$$
and bounded from above by
$$|H|\cdot (k-1-\delta(H))\le  (2+2\mu )k\cdot \mu k\le  2.1k\cdot \mu k.$$

It follows that
\begin{equation}\label{HighX}
|X|\le 19\mu k.
\end{equation}

Hence, since $G$ has minimum degree at least $\frac{k-1}2$ by Observation~\ref{minDegofG} and since $k\ge (2\mu)^{-1}$, we obtain that
\begin{equation}\label{minG-H-X-new}
\delta(G-H-X)\ge \frac{k}2-19\mu k- \max\{d(v,H):v\in G-H-X\}.
\end{equation}
Since by definition of $X$, any vertex $y\in V(G-H- X)$ has less than $\frac k8$ neighbours in $H$, it follows that
\begin{equation}\label{minG-H-X}
\delta(G-H-X)>\frac{k}{3}.
\end{equation}

Let $Y$ be the set of all vertices of $G$ which have degree at most $\frac{k}{2}+50\mu k$ in~$G$. 
We claim that  
\begin{equation}\label{Low}
Y':=V_G\big (3\mu k, V(H)\big )\cap V_G\big (\mu k, V(G-H-X)\big )\subseteq Y.
\end{equation}

Assume that, to the contrary, there is a vertex $v\in Y'\setminus Y$. If $T-t'$ has more than $\mu k$ singleton components, then by~\eqref{s1} these consist of vertices from~$S_2$ (and $t'\in S_1$), and we let $\mathcal K$ be a set of exactly $\mu k$ singleton  components of $T-t'$.
Otherwise,  we let 
 $\mathcal K$ be  a set of at most $\mu k$ nonsingleton components of $T-t'$ such that $2\mu k\le |\bigcup\mathcal K|\le k/4$. 
 Such a set exists 
by  Lemma~\ref{subsetofcompsofT2} with $d=\mu k\ge 1$ and $b=k/4$, which we can apply  since $t'$ is a $\frac 34k$-separator and thus more than $k-1-\frac 34k\ge 3d$ vertices lie in nonsingleton components of $T-t'$ of size at most $\frac k4$, and furthermore, by~\eqref{s1} and~\eqref{T-tfewcomps} there are at most $\lceil\mu k \rceil$ singleton components.

We  embed~$t'$  in~$v$, and embed the  (at most $\lceil\mu k\rceil$)  roots of components  of~$\mathcal K$  in $G-H-X$. 
By~\eqref{minG-H-X}, we can ensure that all of $\bigcup\mathcal K$ is embedded in $G-H-X$.

We next embed all singleton components of $T-t'$ in $G$, avoiding a set~$N$ of $\mu k$ neighbours of $v$ in $H$. If $\mathcal K$ consists of nonsingleton components, then  this  is possible   since $v\notin Y$, and $T-t'$ has at most $\mu k$ singleton components. If $\mathcal K$ consists of singleton components, then this is possible  since $v\notin Y$ and since by Observation~\ref{fewleaves}, $T-t'$ has at most $k/2$ singleton components. 

We  embed the (at most $\mu k$) roots of the nonsingleton components of $T-t'$ in~$N$, and
 finally embed the nonsingleton components of $T-t'$ greedily in~$H$. This is possible  because H has minimum degree $(1-\mu)k$, and  we have already embedded at least $\mu k$ vertices   outside $H$. 
This proves~\eqref{Low}.

We claim that 
\begin{equation}\label{thereisanAi}
\text{for each
 $v\in (V(H)\cup X)\setminus Y$ there is an $i$ with  $|N(v)\cap A_i|<\mu k$.}
 \end{equation}
 Indeed, otherwise there is a  $v\in (V(H)\cup X)\setminus Y$ with at least $\mu k$ neighbours in each $A_i$. By~\eqref{DeltaT} and since $v\notin Y$, we know that $d(t)\le\Delta(T)\le k/2+\mu k\le d(v)$. We embed $t$ in $v$.  We note that by~\eqref{s1} and~\eqref{T-tfewcomps},  the set $R\subseteq N(t)$ of all roots of nonsingleton components of $T-t$ has size  at most $\mu k$.
 
 If $d(t)\ge 2\mu k$  we embed $N(t)$ in $G$, putting at least $\mu k$ of these vertices into each~$A_i$ (which is possible as we assume $v$ has at least $\mu k$ neighbours in each $A_i$), and ensuring that all of $R$ is embedded in $H$. We can then finish the embedding greedily in the  bipartite graph~$H$.

So assume $d(t)< 2\mu k$, and let  $L$ be the set of leaves adjacent to $t$.  By Lemma~\ref{oddandeven}, 
 we can partition $R\cup L$ into $R_1$ and~$R_2$ so that the union of the odd levels of components with root in $R_1$ and  the even levels of components with root in $R_2$ has size between $\mu k$ and $(1-\mu )k$. 
 We embed $R_i\setminus L$   into $A_i$, for $i=1,2$, embed $R_i\cap L$   into $A_i$ if possible, and otherwise arbitrarily into $G$,  and can then finish greedily as before, using at least $\mu k$ and at most $(1-\mu )k$ vertices  in each $A_i$. This proves~\eqref{thereisanAi}.

We note that  
\begin{equation}\label{nomorethan21}
\text{\ \hskip-.352cm for each $x\in X\setminus Y$ there is an $i$ with $|N(x)\cap A_{3-i}|\ge \frac{k}{2}+28\mu k$.\hskip-.1cm}
 \end{equation}
 Indeed, each vertex  $x\in X\setminus Y$ has at least $k/2+50\mu k$ neighbours, while sending  at most $\mu k$ edges to $G-H-X$  by~\eqref{Low} 
 and at most $19\mu k$ vertices to~$X$ by~\eqref{HighX}. So, as $x$ sends at most 
 $\mu k$ edges to one of the $A_i$ by~\eqref{thereisanAi}, it sends at least $\frac{k}{2}+(50-1-19-1)\mu k>\frac{k}{2} +28\mu k$ edges to the other $A_i$. This proves~\eqref{nomorethan21}.
 
For $i=1,2$, 
we let $B_i$ be the set of all vertices in $X\setminus Y$ that see at least $k/2+28\mu k$ vertices of 
$A_{3-i}$ and at most $\mu k$ of $A_i$.
By~\eqref{thereisanAi} and~\eqref{nomorethan21}, we know that
\begin{equation}\label{B1B2parti}
\text{$B_1$ and $B_2$ constitute a partition of $X\setminus Y$.} \end{equation}

\noindent  For $i=1,2$, let $v_i\in A_i \cup B_i$ be any vertex
maximising $|N(v)\cap V(G-H-X)|$ and set $$d'_i:=|N(v_i)\cap V(G-H-X)|.$$
Since $v_i\in (V(H)\cup X)\setminus Y$, it has more than $3\mu k$ neighbours in $H$. So by~\eqref{Low}, 
\begin{equation}
    \label{d'ismall}
    \text{$d'_i<\mu k$ for $i=1,2$.}
\end{equation}

\begin{claim} 
\label{nobigcomponent2}
If
$T-t$ contains a component $K$ of size exceeding 
$\mu k$ then $d'_1=d'_2=0$. 
\end{claim}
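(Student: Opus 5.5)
We argue by contradiction. Suppose $T-t$ has a component $K$ with $|K|>\mu k$, and suppose some $d'_i\ge 1$. We will embed $T$ in $G$, contrary to our standing assumption.

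\textbf{Choosing the vertex to embed.} Fix such an $i$, so $v_i\in A_i\cup B_i$ has a neighbour $y\in V(G-H-X)$. The idea is to start the embedding of a large piece of $T$ at $y$ and to run it inside $G-H-X$. There, \eqref{minG-H-X} gives minimum degree more than $k/3$. Concretely, let $r$ be the root of $K$, the neighbour of $t$ in $K$. Since $t$ is a $\frac k2$-separator, $|K|\le k/2$. We embed $t$ in $v_i$ and $r$ in $y$. We then embed a subtree $K'\subseteq K$ containing $r$ in $G-H-X$ in top-down fashion, where $K'$ has between $\mu k$ and $k/3-1$ vertices, say exactly $\lceil\mu k\rceil+1$ vertices. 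This greedy step works because $\delta(G-H-X)>k/3>|K'|$.

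\textbf{Finishing inside $H$.} The rest of $T$ consists of the other components of $T-t$ and the pieces of $K-K'$, each hanging off an already embedded vertex. We want to place all of it in $H\cup B_1\cup B_2$, essentially in the bipartite graph $H$. Since at least $\mu k$ vertices of $T$ now sit outside $H$, fewer than $(1-\mu)k$ vertices remain. By $\delta(H)\ge(1-\mu)k$, any vertex of $H$ therefore always has an unused neighbour on the correct side, so the embedding can be finished greedily in $H$ once every attachment point lies in $H$ and has the correct parity. The attachment points are of two kinds.

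The first kind is $t$ itself, embedded at $v_i$. If $v_i\in A_i$, its neighbours in $A_{3-i}$ number at least $(1-\mu)k-d'_i>(1-2\mu)k$. If $v_i\in B_i$, then by \eqref{nomorethan21} it has at least $k/2+28\mu k$ neighbours in $A_{3-i}$. In either case we embed the neighbours of $t$ in $A_{3-i}$. This needs $d(t)$ to be at most the available number. By \eqref{DeltaT}, $d(t)<k/2+\mu k$, so this holds.

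The second kind consists of the vertices of $K'$ with children in $K-K'$; these were embedded in $G-H-X$. To avoid them, we choose $K'$ so that every vertex of $K'$ has all of its children in $K'$, except for children that are leaves. Such leaves can be embedded at the very end anywhere in $G$, using the degree bound: by \eqref{minG-H-X} there are more than $k/3$ neighbours in $G-H-X$. This is easiest if we instead take $K'=K$ when $|K|\le k/3$. When $|K|>k/3$ we truncate, using \eqref{T'andL2}: $K$ contains at most $2\mu k$ vertices outside $L_2$, so $K-L_2$ has at most $2\mu k$ vertices, and we can take $K'$ to be $K-L_2$ together with only as many leaves as fit.

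\textbf{Main obstacle.} The delicate point is the leaves of $L_2$ whose parents lie in $K'$, together with checking that the greedy embedding in $H$ never runs out of room. The leaves attached to $K'$ must be placed in $G-H-X$, or in $H$ via the at most $d(\cdot,H)<k/8$ edges from $G-H-X$. Each parent of such leaves has degree at most $\Delta(T-L_2)\le\mu k$ within $T-L_2$, but it may carry many leaves. We handle this by embedding all of $K$ inside $G-H-X$ whenever $|K|<k/3$. Otherwise we put just enough of $K$ there that at least $\mu k$ vertices lie outside $H$, and we embed the remaining leaves of those parents last, using $\delta(G)\ge(k-1)/2$ from Observation~\ref{minDegofG}: this leaves at least $(k-1)/2-|{\rm used}|$ choices, which suffices once the counts are checked. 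With these choices, $T$ embeds in $G$, which is the desired contradiction, so $d'_1=d'_2=0$.
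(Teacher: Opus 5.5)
Your argument works when $|K|\le k/3$: then all of $K$ fits greedily into $G-H-X$, and the rest of $T$ hangs only off $t$. There is a genuine gap in the remaining case $k/3<|K|\le k/2$.

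\textbf{Where the gap is.} In that case you postpone the leaves of $K-K'$ to the very end and justify this by ``$(k-1)/2-|\mathrm{used}|$ choices''. At that point $|\mathrm{used}|$ is close to $k$, so the bound is negative, and the required count genuinely need not hold. For instance (say $k$ even), let $t\in S_1$ have $k/2-3$ leaf children and one further child $r$, and let $r$ have a single child $u$ carrying $k/2-2$ leaves. This tree is consistent with \eqref{s1}, \eqref{DeltaT} and Observation~\ref{fewleaves}, and $t$ is a $\frac k2$-separator. The image $y'\in V(G-H-X)$ of $u$ may have degree only $\lceil (k-1)/2\rceil$. Then all but at most one vertex of $N(y')\setminus\{y\}$ must receive a child of $u$. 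This includes the (up to almost $k/8$) neighbours of $y'$ in $H$, which your greedy phase in $H$ is free to use up first.

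\textbf{Reordering does not fix it.} Embedding all of $K$ before the rest does not save your choice of image for $t$ either. If $v_i\in B_i$, you only know that $v_i$ has at least $k/2+28\mu k$ neighbours in $A_{3-i}$, and its total degree may be barely above $k/2+50\mu k$. The $k/2-3$ leaf children of $t$ need almost all of these. Nothing you have established prevents $N(y')\cap V(H)$, of size close to $k/8$, from lying inside $N(v_i)\cap A_{3-i}$. In that situation $t$ and $u$ cannot both be served. So placing $t$ on $v_i$ is the wrong move when $K$ is large and $v_i\in X$.

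\textbf{The missing idea.} The paper splits according to whether $E(H,G-H-X)$ is empty.

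If it is nonempty, do not use $v_i$ at all. Put $t$ on a vertex $v\in V(H)$ that has a neighbour $y\in V(G-H-X)$, and map the root of $K$ to $y$. Then embed all of $K$ immediately and greedily in $G$, preferring $G-H-X$; this is possible since $|K|\le k/2$ and $\delta(G)\ge (k-1)/2$. At least $\min\{|K|,k/3\}>\mu k$ vertices then lie outside $H$. The rest of $T$ attaches only to $t$, whose image has $(1-\mu)k$ neighbours in $H$. So it embeds greedily in $H$ regardless of where the overflow of $K$ landed.

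If $E(H,G-H-X)=\emptyset$, then \eqref{minG-H-X-new} gives $\delta(G-H-X)\ge k/2-19\mu k$. The vertex adjacent to $G-H-X$ must now be some $v\in B_i$. Embed $K$ first, from a neighbour of $v$; it uses at most $19\mu k$ vertices outside $G-H-X$. Hence $v$ keeps at least $k/2+9\mu k$ unused neighbours in $A_{3-i}$, which by \eqref{DeltaT} suffices for the children of $t$. Neither truncating $K$ nor postponing its leaves is needed.
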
 

\begin{proof}
Let $s$ be a neighbour of $t$ in $K$. 
If there is an edge $vy$ with $v\in V(H)$ and $y\in V(G-H-X)$, 
we embed $t$ in $v$ and $s$  in $y$. Applying (\ref{minG-H-X}), we  now embed  $K$ in $G$  using at least 
$\min\{|K|,\frac{k}{3}\}$ vertices of $G-H-X$.  We can then finish the embedding off greedily in $H$.
So, we may assume that $$E(H, G-H-X)=\emptyset,$$ which implies two things: First, by  (\ref{minG-H-X-new}),
\begin{equation}\label{noedgefromHtoG-H-X}
\delta(G-H-X)\ge \frac{k}2-19\mu k,
\end{equation}
 and second,
there is an edge $vy$ with $v\in B_i$ for some $i$ and $y\in V(G-H-X)$, as otherwise $d'_1=d'_2=0$ and we are done. 

Embed $t$ in $v$, and $s$  in $y$. Apply~\eqref{noedgefromHtoG-H-X} to embed  $K$ in $G$   using at least 
$\min\{|K|, k/2-19\mu k\} >\mu k$ vertices of $G-H-X$, and   at most $\max\{0, |K|- k/2+19\mu k\} \le 19\mu k $ vertices of $H$, where we use the fact that $|K|\le k/2$ for the last inequality. As $v\in B_i$, it has degree at least
$k/2+28\mu k$
into $A_{3-i}$, and therefore sees at least $k/2+9\mu k$ unused vertices of $A_{3-i}$.
So by (\ref{DeltaT}) 
we can    embed the 
children of~$t$ in~$A_{3-i}$. We then embed
the remainder of~$T$ greedily in $H$ which has minimum degree at least $(1-\mu)k$, a contradiction to our assumption that $T$ cannot be embedded in $G$.
\end{proof}

We let $H'$ be the bipartite subgraph of $G$ formed by the edges between $A_1 \cup B_1$
and $A_2 \cup B_2$.  
Observation~\ref{minimalityOfG} implies that 
the total number of edges having at least one endpoint in $V(H)\cup X=V(H')\cup (X\cap Y)$  exceeds $\frac{k-2}2 |V(H)\cup X|$. 
Using the definition of $Y$, 
and setting $$e':=e_G(V(H'),V(G-H-X))$$ 
we calculate
 \begin{align}
 e(G[V(H')])\notag  & >  \frac{k-2}2 |V(H)\cup X|  - e(X\cap Y, G-X\cap Y) - e(X\cap Y) -e'\notag \\
 &>  \frac{k-2}2 |H'| +  \frac{(k-2)}2 |X\cap Y|- |X\cap Y|\cdot \big (\frac k2+50\mu k \big ) -e' \notag \\
 &\ge
 \frac{k-2}2 |H'| - 51\mu k |X\cap Y| -e'.\label{pony}
   \end{align}
   
Set
 $$f(v):=
d_G(v,V(H'))
+1000\mu d_G(v,X\cap Y)
+2d_G(v,G-H-X)$$
 for $v \in V(H')$.  Note that by~\eqref{Ai1+3}, \eqref{HighX} and~\eqref{d'ismall}, we have  
\begin{equation}\label{f(v)le22}
  \text{$f(v) \le k+25\mu k$ for all $v\in V(H')$. }
  \end{equation}
Furthermore, using~\eqref{pony} and
 the fact that by the definition of $X$, 
  $$e_G(V(H'),X\cap Y)\ge|E(H,X \cap Y)| \ge \frac{k}{8}|X \cap Y|$$ 
we deduce that the average of $f$ over $v \in V(H')$  exceeds
   $$k-2-\frac{102\mu k|X\cap Y|}{|H'|}-\frac{2e'}{|H'|} +\frac{1000\mu  \frac k8 |X \cap Y|}{|H'|} +  \frac{2e'}{|H'|}
  \ge k-2.
  $$ 
So without loss of generality, and since $|A_1\cup B_1|$ and $|A_2\cup B_2|$ are within a factor of less than two of each other, we may assume that  \begin{equation}\label{avfH'general}
  \text{the average of $f$ over $v \in A_1\cup B_1$  exceeds $f_1:=k-2+x$
  }
  \end{equation}
  \vskip-.3cm
and
\begin{equation}\label{avfH'generalforQ2}
  \text{the average of $f$ over $v \in A_2\cup B_2$  exceeds $f_2:=k-2-2x$
  }
  \end{equation}
 for some  $x\ge 0$.
  In particular, by~\eqref{f(v)le22}, $x< 26\mu k$ and thus for $i=1,2$,
  \begin{equation}
      \label{lun}
      f_i>k-2-52\mu k.
  \end{equation}
  Moreover, setting $$Q_i:=\{v \in A_i \cup B_i ~|~ f(v)>f_i\}$$    for $i=1,2$,  we have that \begin{equation}\label{qnot0}q_i:=|Q_i|>0
      \end{equation}
   and  \begin{equation}\label{eachvofDhas}
  \text{
  each vertex of $Q_i$ has degree exceeding
$f_i-2d'_i$ in $G[V(H)\cup X]$.}
\end{equation}
    Thus,  by~\eqref{HighX} and~\eqref{d'ismall}, since $|A_{3-i}|\le k+3\mu k$ by~\eqref{Ai1+3}, and by the definition of $B_i$,
each vertex of $Q_i\cap B_i$ sees more than
$f_i -2d_i' -19\mu k -\mu k\ge 
|A_{3-i}|-80\mu k$ vertices of $A_{3-i}$ (where we used~\eqref{lun} for the inequality), and so does each vertex of $Q_i\cap A_i$, by our lower bound on $\delta(H)$.
Hence  setting $A_{3-i}':=V_{A_{3-i}}((1-200\mu) q_i, Q_i)$ for $i=1,2$, we have
  \begin{equation}\label{A2toD}
  \text{$|A_{3-i}'|\ge |A_{3-i}|-\frac k2,$ }
  \end{equation}
as otherwise double counting the non-edges between $A_{3-i}$ and $Q_i$ gives $200\mu q_i\frac k2\le 80\mu kq_i$, a contradiction to~\eqref{qnot0}.
  
 For $i=1,2$, set 
 $$Q'_i:=\{v \in A_i \cup  B_i ~|~ f(v) >f_i - 250 \mu q_i\}.$$  
 Note that
\begin{equation}\label{geachvofD'has}
  \text{\hskip-.2cm each  $v\in Q'_i$ has degree exceeding $f_i-250 \mu q_i -2 d'_i$ in $G[V(H)\cup X]$.}
  \end{equation}

Moreover,  we know that 
\begin{equation}\label{Q23}
    |Q'_i|\ge \frac{2k}{3}.
\end{equation}  Indeed, otherwise  by~\eqref{avfH'general} and~\eqref{f(v)le22}, we have $78\mu kq_i \ge 250\mu q_i(|A_i \cup  B_i|-\frac 23k)\ge 250\mu q_i(\frac k3-\mu k)\ge 80\mu kq_i$, a contradiction to~\eqref{qnot0}.

\begin{claim}\label{extend}
  Let $1\le j\le i\le 2$ and $r\in S_i$. Let  $\varphi$ be   an embedding of $r$ and some  
    subset~$\mathcal C$ of the 
    nonsingleton
    components of $T-r$ such that $\varphi (r)\in A_j\cup B_j$ and $\varphi (V(\bigcup\mathcal C))\subseteq V(G-H-X)$. Suppose for $\ell=1+i-j$ we have
   \begin{enumerate}[(a)]
   \item\label{weird}
       $|V(\bigcup\mathcal C)\cap L_2|+f_\ell-2 d'_{\ell}\ge k-2$ and
   \item\label{weird2}
     if $\mathcal C=\emptyset$ then $i=j=1$,  $\varphi(r)\in Q_1$ and $r$ is adjacent to at least two leaves from~$L_2$.
   \end{enumerate}
   Then we can extend $\varphi$ to an embedding of all of~$T$.
\end{claim}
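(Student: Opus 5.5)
The plan is to extend $\varphi$ by first embedding, top-down inside the bipartite graph $H'$, everything of $T$ that is neither in $\bigcup\mathcal C$ nor a leaf of $L_2$, and then attaching the leaves in $L_2$ greedily. The vertex $\varphi(r)$ is the hub of this embedding. Recall that $\varphi(r)\in A_j\cup B_j$ and $r\in S_i$. The parity bookkeeping is set up as follows: vertices of $S_i$ go to side $j$ of $H'$ and vertices of $S_{3-i}$ go to side $3-j$. With $\ell=1+i-j$, the index $\ell$ is exactly the side of $H'$ that receives $S_1$. Hence the non-leaf vertices of $S_2$ go to side $3-\ell$, and each vertex of $S_1$ lands in $A_\ell\cup B_\ell$.

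The first step is to embed $T':=T-L_2-\bigcup\mathcal C$ (keeping $r$ fixed) top-down in $H'$. By \eqref{T'andL2} we have $|V(T')|\le 2\mu k$ and $\Delta(T-L_2)\le\mu k$. I would map $S_1\cap V(T')\setminus\{r\}$ into $Q'_\ell$, or better into $Q_\ell$ where possible, and $S_2\setminus L_2$ into $A'_{3-\ell}$. This is where \eqref{A2toD} and \eqref{Q23} enter. A vertex of $A'_{3-\ell}$ sees at least $(1-200\mu)q_\ell$ vertices of $Q_\ell$, and $Q'_\ell$ is large with its vertices adjacent to almost all of $A_{3-\ell}$. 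Vertices of $A_\ell\cup B_\ell$ in $Q'_\ell$ see at least $|A_{3-\ell}|-O(\mu k)$ vertices of $A_{3-\ell}$, so they have at least $k/2-O(\mu k)$ neighbours in $A'_{3-\ell}$. Since only $O(\mu k)$ vertices are used, a greedy choice of children is always possible, except possibly when the parent lies in $B$ and one must check it has enough neighbours in $A'$. I would handle this the same way, using the bound $|N(x)\cap A_{3-i}|\ge k/2+28\mu k$ from \eqref{nomorethan21} together with \eqref{A2toD}. The children of $r$ itself are placed analogously; $r$ lies in $A_j\cup B_j$, which has ample neighbourhood on side $3-j$.

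The second step, which I expect to be the main obstacle, is the greedy attachment of $L_2$. Each vertex $u\in S_1$ has all its $L_2$-children placed in $N(\varphi(u))$, avoiding used vertices. The key degree bound comes from \eqref{geachvofD'has} and \eqref{eachvofDhas}. An image in $Q_\ell$ has degree more than $f_\ell-2d'_\ell$ in $G[V(H)\cup X]$. The part of the tree that is not embedded in $G-H-X$ has at most $k-|V(\bigcup\mathcal C)|$ vertices. The vertices of $\bigcup\mathcal C$ are embedded outside $V(H)\cup X$, so they do not block these neighbourhoods; this is exactly why hypothesis (a) compensates with the term $|V(\bigcup\mathcal C)\cap L_2|$.

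The count then goes as follows. We need $\deg(\varphi(u))$ in $V(H)\cup X$ to be at least the number of vertices of $T-\bigcup\mathcal C$ other than $u$, minus a slack. Condition (a) gives $f_\ell-2d'_\ell\ge k-2-|V(\bigcup\mathcal C)\cap L_2|$, which is at least $|V(T)\setminus V(\bigcup\mathcal C)|-2$ up to the non-leaves of $\mathcal C$. To obtain the extra needed units, I would process the $S_1$-vertices so that the last one to receive its leaves is mapped into $Q_\ell$, with strict inequality $f>f_\ell$. All others, mapped into $Q'_\ell$, lose at most $250\mu q_\ell$. That loss is absorbed because when their leaves are attached, the final $\Omega(\mu k)$ leaves of the last vertex are not yet placed. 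This requires ordering the leaf attachment with the $Q_\ell$-vertex last, and ensuring it has at least a few leaf children.

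In the case $\mathcal C=\emptyset$, condition (b) says that $r$ itself is in $Q_1$ and has at least two $L_2$-leaves. In that case $r$ plays the role of the last vertex, and its two leaves supply the two units of slack between $k-2$ and $k-1$ minus its own image. When $\mathcal C\neq\emptyset$, I would pick the last vertex to be one embedded in $Q_\ell$ via \eqref{A2toD}. The slack comes from $\mathcal C$ containing at least one non-leaf vertex (a root in $S_{3-i}$ adjacent to $r$), which is counted in $T$ but embedded outside $V(H)\cup X$. The final verification is that the weights in $f$, namely $2d_G(v,G-H-X)$ and $1000\mu d_G(v,X\cap Y)$, only overcount the degree by $2d'_\ell$ plus terms coming from $X\cap Y$. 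Those terms are handled because vertices in $X\cap Y$ are genuine neighbours in $V(H)\cup X$.
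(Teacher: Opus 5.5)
Your first step (embedding $T-\bigcup\mathcal C-L_2$ top-down in $H'$, with $S_2$ into $A'_{3-\ell}$ and $S_1$ into $Q'_\ell$) matches the paper. The slack argument in your second step, however, fails.

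A vertex of $S_1$ placed in $Q'_\ell\setminus Q_\ell$ is only guaranteed degree more than $f_\ell-250\mu q_\ell-2d'_\ell$ in $G[V(H)\cup X]$. So when its leaves are attached, roughly $250\mu q_\ell$ vertices of $T$ must still be unplaced. In the range where leaf-parents actually get pushed out of $Q_\ell$, $q_\ell$ can be of order $\mu k$, so this deficit is of order $\mu^2k$.

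You want the deficit covered by the leaves of a single last vertex in $Q_\ell$, claiming it has $\Omega(\mu k)$ leaves. Nothing supports this: $S_1$ may consist of about $\mu k$ vertices carrying about $1/\mu$ leaves each. For $\mathcal C=\emptyset$ you let $r$ play this role, but its two leaves give slack $2$. For $\mathcal C\neq\emptyset$, a non-leaf of $\bigcup\mathcal C$ gives slack $1$. In the paper these single units only fix an additive constant; they cannot pay for $250\mu q_\ell$.

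Your rule of putting every vertex of $S_1$ into $Q_\ell$ ``where possible'' also works against you. Vertices without leaf children can exhaust the $Q_\ell$-neighbourhoods and push all leaf-parents into $Q'_\ell\setminus Q_\ell$, leaving nothing to absorb their deficit.

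The missing idea is to make the preference depend on leaf children. Vertices of $S_1$ adjacent to $L_2$ go into $Q_\ell$ if possible, and all others go into $Q'_\ell\setminus Q_\ell$ if possible. Then there are two cases.
\begin{itemize}
\item Every parent of the remaining leaves lies in $Q_\ell$. Then (a) together with \eqref{eachvofDhas} suffices directly.
\item Some leaf-parent was displaced. Then $|Q_\ell|<k/3$, since $|T-L_2|\le 2\mu k$. Hence $|Q'_\ell\setminus Q_\ell|\ge k/3$ by \eqref{Q23}, and non-parents never occupy $Q_\ell$. The parent in $A'_{3-\ell}$ of the displaced vertex had all of its at least $(1-200\mu)q_\ell$ neighbours in $Q_\ell$ already used, and these hold leaf-parents. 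You note this degree bound in your first step but never exploit it. It gives a set $P$ of at least $q_\ell/2$ leaf-parents in $Q_\ell$. Attach the leaves of $S_1\setminus P$ first: at least $q_\ell/2\ge 250\mu q_\ell$ leaves of $P$ are then still unplaced, which is exactly the slack needed. The leaves of $P$ are attached last via \eqref{eachvofDhas}.
\end{itemize}

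There is also a separate omission. $\varphi(r)$ is only known to lie in $A_j\cup B_j$, not in $Q'_\ell$, so \eqref{eachvofDhas} and \eqref{geachvofD'has} say nothing about it. If $r\in S_1$ and $\varphi(r)\notin Q_1$ (allowed when $\mathcal C\neq\emptyset$), the leaves of $r$ must be attached first, into $A_{3-\ell}$. This uses that $\varphi(r)\notin Y$, so it has at least $k/2+27\mu k\ge\Delta(T)$ unused neighbours there.
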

  \begin{proof}
 For convenience, set 
 $D:=V(\bigcup\mathcal C)\cup \{r\}\cup L_2$.
 Embed $T-D$ in a top down fashion in $H'$, starting with the neighbours of $r$, such that
 \begin{itemize}
     \item  $S_2\setminus D$ is embedded in $A_{3-\ell}'$,
     \item if $s\in S_1\setminus D$ is adjacent to $L_2$, we embed $s$ in $Q_\ell$ if possible, and otherwise we embed $s$ in $Q'_\ell\setminus Q_\ell$, and
     \item if $s\in S_1\setminus D$ is not adjacent to $L_2$, we embed $s$   in $Q'_\ell\setminus Q_\ell$ if possible and otherwise we embed $s$ in $Q_\ell$. 
 \end{itemize}
 Such an embedding is possible because $|T-D|\le |T-L_2|\le 2\mu k$ by~\eqref{s1} and~\eqref{T'andL2}, and thus at each step, any vertex from $A_\ell\cup B_\ell$ has an unused neighbour in $A_{3-\ell}'$ by \eqref{A2toD}, and  any vertex  from  $A_{3-\ell}'$ has an unused neighbour in~$Q_\ell'$ by~\eqref{Q23}.
 
Next, if $r\in S_1\setminus\varphi^{-1}( Q_1)$, we  set $L_2^r:=N(r)\cap L_2$ and otherwise we set $L_2^r:=\emptyset$. 
We embed $L_2^r$ in $A_{3-\ell}$. This is possible since $\varphi (r)\in A_\ell\cup B_\ell\subseteq V(G-Y)$ and thus $\varphi (r)$ has degree at least $$\frac k2+50\mu k-d'_\ell-\mu k-19\mu k- |T-D|\ \ge\ \frac k2+27\mu k\ \ge\ \Delta(T)\ \ge\ d(r)$$ into the unused vertices of $A_{3-\ell}$ by~\eqref{T'andL2}, \eqref{DeltaT}, \eqref{HighX}, \eqref{Low} and \eqref{d'ismall}.

 It remains to embed $L_2':=L_2\setminus \big(V(\bigcup\mathcal C)\cup L_2^r)\big)$.
If  all parents of $L'_2$ were embedded  in $Q_\ell$ then
we can embed  $L'_2$ greedily in $G[V(H)\cup X]$  since the vertices of $Q_\ell$ have degree exceeding $f_\ell-2d'_\ell\ge k-2-|V(\bigcup\mathcal C)\cap L_2|$ into $G[V(H)\cup X]$ by~\eqref{eachvofDhas} and~\eqref{weird}, and since $\bigcup\mathcal C$ was embedded in $G-H-X$. So we can assume not all parents of $L'_2$ were embedded  in~$Q_\ell$.

In particular, $|Q_\ell|< k/3$ (as otherwise our second embedding condition and~\eqref{s1} would imply that all parents of $L_2$ are embedded in $Q_\ell$), and thus by~\eqref{Q23}, $|Q'_\ell\setminus Q_\ell|\ge k/3$. So
by our embedding rules,  and since the minimum degree from $A_{3-\ell}$ to $Q'_\ell\setminus Q_\ell$ is at least $|Q'_\ell\setminus Q_\ell|-23\mu k$, each vertex embedded in $Q_\ell$ is adjacent to $L'_2$. Also by our embedding rules, and since not all parents of $L'_2$ were embedded  in~$Q_\ell$,
 we embedded a set $P$ of at least $(1-200 \mu)q_\ell\ge q_\ell/2$ parents of~$L'_2$ in $Q_\ell$, and embedded  $S_1\setminus P$ in   $Q'_\ell$. Note that the set $L_2''$ of  neighbours  of $S_1\setminus P$  in~$L'_2$
has size at most $|L'_2|-q_\ell/2$, a bound which can be improved to $|L'_2|-q_\ell/2-1$ if $\mathcal C=\emptyset$ and thus~\eqref{weird2} applies. 

We  embed $L''_2$ greedily in $G[V(H)\cup X]$. This is possible since by~\eqref{geachvofD'has} and~\eqref{weird}, each vertex of $Q_\ell'$ has degree exceeding 
\begin{align*}
    f_\ell-250\mu q_\ell-2d'_\ell
& \ge k-2-\frac{q_\ell}2-|V(\bigcup\mathcal C)\cap L_2|
\end{align*} into $G[V(H)\cup X]$, which is
sufficient if $\mathcal C=\emptyset$. If $\mathcal C\neq\emptyset$, then   $|V(\bigcup\mathcal C)\cap L_2| \le |V(\bigcup\mathcal C)|-1$  so again the bound is sufficient as $\bigcup\mathcal C$ was embedded in $G-H-X$.

We finish by embedding  $L'_2\setminus L_2''$ greedily in $G[V(H)\cup X]$. This is possible as each vertex of $L'_2\setminus L_2''$ is adjacent to $P$  which was embedded in $Q_\ell$, and since each vertex of $Q_\ell$ has degree  strictly greater than $f_\ell-2d'_\ell\ge k-2-|V(\bigcup\mathcal C)\cap L_2|$ 
in $G[V(H)\cup X]$
   by~\eqref{eachvofDhas} and~\eqref{weird}, while $\bigcup\mathcal C$ was embedded in $G-H-X$.
  \end{proof}

By~\eqref{s1}, some $r\in S_1$ is adjacent to at least two leaves of $T$. 
Thus,~if~$d'_1=0$, we can embed $r$ in $Q_1$ 
and apply Claim~\ref{extend} with $\mathcal C=\emptyset$ and $i=j=1$ to extend this to an embedding of $T$ in $G$, a contradiction. So $d'_1> 0$, and hence,  by Claim~\ref{nobigcomponent2}, 
\begin{equation}\label{d'_inot0} 
    \text{each component of $T-t$ has size at most $\mu k$. }
\end{equation}
\
Now, 
for each $i=1,2$  
let $\mathcal C_i$ be a possibly empty set of  nontrivial components of $T-t$ such that $|\mathcal C_i|\le d'_i$ and $$100 d'_i
    \le 
    |V(\bigcup \mathcal C_i)\cap S_2\cap L|
    \le |V(\bigcup \mathcal C_i)|\le \frac k4.$$
Such  sets exist
      by Lemma~\ref{180p}, 
      which can be applied
because of~\eqref{s1}, \eqref{T'andL2}, \eqref{d'ismall},  \eqref{d'_inot0} 
and
since by 
Observation~\ref{fewleaves}, $t$ is adjacent to at most $\frac k2$ leaves.
\\
If  $t \in S_1$,
we embed~$t$ in~$v_1$ (the vertex of $A_1\cup B_1$ having degree $d'_1$ to $G-H-X$)
and embed $\bigcup\mathcal C_1$ in $G-H-X$, which is possible by~\eqref{minG-H-X}. 
Applying Claim~\ref{extend} with $r=t$ and $i=j=1$,  we can extend this embedding to an embedding of $T$ in $G$, a contradiction. 
Hence we can assume from now on that 
$t$ is in $S_2$.

Note that we could embed~$t$ in~$v_j$ for $j=1$ or $j=2$, 
 use~\eqref{minG-H-X} to embed $\bigcup\mathcal C_j$ in $G-H-X$, and apply Claim~\ref{extend} with $r=t$ and $i=2$ to extend the remainder  of $T$ in $G$, if we can ensure that~\eqref{weird} holds. Hence it would be sufficient to  see that   for at least one of $j=1,2$, we have that $d'_j>0$ and 
 \begin{enumerate}
     \item[{\bf I(j):}] $|V(\bigcup\mathcal C_j)\cap L_2|+f_{3-j}-2 d'_{3-j}\ge k-2$.
 \end{enumerate}
However, if $I(j)$ fails for both $j=1,2$, then by~\eqref{avfH'general} and~\eqref{avfH'generalforQ2},
 \begin{align*}
  3k-6 
  & >\Big(|V(\bigcup\mathcal C_1)\cap L_2|+f_{2}-2 d'_{2}\Big)+2\Big(|V(\bigcup\mathcal C_2)\cap L_2|+f_{1}-2 d'_{1}\Big)
    \\
    & \ge 96 (d'_1+d'_2)+2f_1+f_2
     \\
    & \ge 96 (d'_1+d'_2)+3k-6,
 \end{align*}
 a contradiction. 
 Thus (as we know that $d'_1>0$),  if we are not done, then $I(2)$ holds and $d'_2=0$. In that case $\mathcal C_2=\emptyset$. Spelling out $I(2)$ with $d'_2=0$ and $\mathcal C_2=\emptyset$ gives 
 $f_{1}-2 d'_{1}\ge k-2$. So we embed $r$ in any vertex of $Q_1$ and  apply Claim \ref{extend} with $\mathcal C=\emptyset$ and $i=j=1$ to finish the embedding of $T$. 
 This proves Theorem \ref{thm:main} for subgraphs $H$ as in Theorem \ref{thm:main} (ii).

\subsection{Non-bipartite $H$}

In this section we prove Theorem \ref{thm:main} for subgraphs $H$ as in Theorem \ref{thm:main}~(i). That is, we are given a robust graph $G$ of average degree exceeding $k-2$ and a subgraph~$H$ with $|V(H)|
    \le (1+\mu)k$ and $\delta(H)\ge (1-\mu)k$. For contradiction, we assume the given tree $T$ cannot be embedded in $G$.
    
Set $H_0:=H$ and for $i=1,\ldots ,2\mu k$, let $H_i$ be obtained from $H_{i-1}$ by adding a vertex $v_i$ of $G-H_{i-1}$ which  has at least $\frac{(3+100\mu)}{4}k$ neighbours in $H_{i-1}$ if such a vertex exists, and by setting $H_i:=H_{i-1}$ otherwise. Set $H':=H_{2\mu k}$.

Let $a_i$ be the average degree of $H_i$. Then for $1\le i\le 2\mu k$, either $H_i=H_{i-1}$ or 
\begin{align}
a_i\ & \ge \ a_{i-1}\frac{|H_{i-1}|}{|H_i|}+ 2\cdot \frac{\frac{3+100\mu}4 k}{|H_i|}\ \ge \
\frac{a_{i-1}(|H_{i}|-1)+\frac 32 k+50\mu k}{|H_{i}|}  \notag  \\
&
\ge \ a_{i-1}+
\frac{-a_{i-1}+\frac 32 k+50\mu k}{|H_{i}|}
\ \ge \
a_{i-1}+
\frac{\frac 12 k+47\mu k}{k+3\mu k}    > \ a_{i-1}+\frac 12, \label{increase}
\end{align}
   where we used the facts that $H_i$ has at most $(1+3\mu)k$ vertices and that $a_{i-1}\le |H_{i-1}|$ for the second-to-last inequality.

Let $a$ denote the average degree of $H'$. If  $ |H'-H|\ge 2\mu k$ then by \eqref{increase}, 
$$
a=a_{2\mu k} \ge a_0 + |H'-H|\cdot \frac 12 \ge (1-\mu)k +  2\mu k\cdot \frac 12 =k,
$$
and we can apply Theorem~\ref{thmRS25} to embed $T$ and are done. 
   Thus we may assume  that 
   \begin{equation}\label{not2muk}
   |H'-H|<2\mu k,
   \end{equation}
   and therefore
\begin{equation}\label{outsideH'baddeg}
   \text{each $v\in V(G-H')$ has less than $\frac{3+100\mu}{4}k$ neighbours in $H'$.}
   \end{equation}
   
   Moreover, if $H'$ has a vertex $v^*$ of degree at least $k-1$, then we are done by Theorem~\ref{thmRS} applied to the graph $\tilde H$ consisting of $v^*$ and $k-1$ of its neighbours, since the minimum degree of $\tilde H$ is at least 
   
   $$
  \delta(H')-|H'-\tilde H| \ge 
  \frac{(3+100\mu)}{4}k
  - 3\mu k >\frac 23 k
   $$
   where we used \eqref{not2muk}, the given bounds on the minimum degree and the order of $H$, and the definition of $H'$. So we  will assume from now on that
   \begin{equation}\label{mindegH'casenonbip}
   \text{each $v\in V(H')$ has degree at most $k-2$ in $H'$.}
   \end{equation}
  Set $$\bar a:=k-2-a.$$ By \eqref{increase}, and by the given bound on the minimum degree of $H$, we know that
    \begin{equation}\label{bara}
  \bar a <\mu k.
   \end{equation}
   We claim that furthermore, 
     \begin{equation}\label{bara2}
  \bar a >0.
   \end{equation}
   Indeed assume otherwise. Then $H'$ is $(k-2)$-regular, and by the robustness of $G$, there is an edge from $H'$ to a vertex $w\in V(G-H')$. We can embed $T$ greedily top-down into $H'\cup\{w\}$, by rooting $T$ at any leaf which we embed in $w$. So~\eqref{bara2}~holds.

 Next, 
we define a set $X$ as follows. Starting with the empty set, we  repeatedly add vertices from $V(G-H')$ which have less than $11 \mu k $ neighbours in  $G-H'-X$ until either $|X|\ge  3 \bar a$  or  no suitable vertices remain in~$G-H-X$. We note that by Observation~\ref{minDegofG}, the average degree  from $X$ to $H'$ is greater than $(k-1)/2-11\mu k -3 \bar a\ge 49|H'|/100$, where we use~\eqref{bara} for the inequality. Hence
\begin{equation}\label{baranew}
    \text{
the average degree from $H'$ to $X$ exceeds $\frac{49}{100}|X|$.
    }
\end{equation}
and therefore, the average degree from $H'$ to $H'\cup X$ exceeds $a+\frac{49}{100}|X|\ge k-2-\bar a+ \frac{|X|}3$. So,
\begin{equation}\label{baranew2}
    \text{if $|X|\ge  3 \bar a$ then  $H'$ has a vertex of degree at least $k-1$ in $G$.}
\end{equation}
Furthermore, setting $G':=G-H'-X$, we have
     \begin{equation}\label{XtoG'}
\text{$|E(X,G')|< 11 \mu k|X|$}, 
   \end{equation}
  and   
   \begin{equation}\label{mindeggg}
\text{if $|X|<3\bar a$ then $\delta(G')\ge 11 \mu k$}.
   \end{equation}
Let $t$ be the $k/2$-separator of $T$.     
 \begin{claim}\label{thasatmostmukleafch}
       \text{Vertex 
$t$ has at most  $ \mu k$ leaf children.}
          \end{claim}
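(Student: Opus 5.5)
We argue by contradiction: suppose $t$ has a set $L_t$ of more than $\mu k$ leaf children. The plan is to embed $t$ at a suitable vertex $v\in V(H')$ and embed $T-t-L_t$ essentially inside $H$. We save the leaves $L_t$ for the very end, so that they only need to be accommodated by the degree of $v$. Since $|T-L_t|\le k-1-\mu k$ and $\delta(H)\ge(1-\mu)k$, a greedy top-down embedding of $T-t-L_t$ in $H$ never gets stuck: at every step fewer than $(1-\mu)k$ vertices have been used.

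For the children of $t$, note that any $v\in V(H')$ has at least $\frac{3+100\mu}4k-2\mu k$ neighbours in $H$, by the construction of $H'$ and~\eqref{not2muk}. The non-leaf children of $t$ number at most $(k-1-\mu k)/2<\frac 34 k$, since each heads a component with at least two vertices. So we can place all non-leaf children of $t$ in $N(v)\cap V(H)$ and then continue greedily in $H$. The only issue is the final step: embedding $L_t$ needs $v$ to have at least $|L_t|$ unused neighbours. We split according to the two alternatives produced by the construction of $X$.

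\emph{Case $|X|\ge 3\bar a$.} By~\eqref{baranew2}, some $v\in V(H')$ has $d_G(v)\ge k-1$. We embed $t$ in $v$ and proceed as above. At the end at most $k-1-|L_t|$ vertices other than $v$ have been used, so $v$ still has at least $|L_t|$ free neighbours and $L_t$ embeds, a contradiction.

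\emph{Case $|X|<3\bar a$.} Here~\eqref{mindeggg} gives $\delta(G')\ge 11\mu k$, but possibly no vertex of $H'$ reaches degree $k-1$. We use robustness instead. Apply Observation~\ref{minimalityOfG} to $S:=V(H')\cup X$, which gives
\[
\sum_{v\in S}\big(d(v,S)+2d(v,G')\big)>(k-2)|S|.
\]
Since $|X|<3\bar a<3\mu k$ and $X$-vertices have few edges to $G'$ by~\eqref{XtoG'}, we push this average onto $H'$. We also need to discount the contribution of $X$ via~\eqref{baranew}. The goal is a vertex $v\in V(H')$ with
\[
d(v,H'\cup X)+2d(v,G')\ge k-1 .
\]
Embed $t$ in $v$. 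If $d(v,G')=0$, then $v$ has degree $\ge k-1$ and we finish as in the first case. Otherwise, for each of $d':=d(v,G')$ neighbours $y\in G'$ we want to gain two units: a child of $t$ goes to $y$, and one further vertex goes into $G'$ as well. To arrange this, we pick $d'$ components of $T-t$ and embed their roots in $N(v)\cap V(G')$. Then, using $\delta(G')\ge 11\mu k$, we embed one more vertex of each such component (or a leaf from $L_t$) inside $G'$. After that we finish the rest in $H$, and embed the remaining leaves of $L_t$ into $N(v)\cap(V(H')\cup X)$, which now has enough room by the choice of $v$.

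I expect the main obstacle to be this factor-$2$ accounting in the second case. The issue is that children of $t$ placed in $G'$ may be leaves, so the weight $2d(v,G')$ is not automatically realised. I would first try to handle this by choosing, via Lemma~\ref{subsetofcompsofT2}, a family of at most $d'$ non-singleton components whose total size lies between $2d'$ and something small. These are embedded entirely into $G'$ using its linear minimum degree, while all leaves of $L_t$ go into $H'\cup X$. If $T-t$ has too few non-singleton components for this, then $t$ has at least $k/2$ leaf neighbours and Observation~\ref{fewleaves} finishes.
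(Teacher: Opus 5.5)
\textbf{Gap in the second case.} Your first case ($|X|\ge 3\bar a$) is the paper's argument. Your averaging step, giving $v\in V(H')$ with $d(v,V(H')\cup X)+2d(v,G')\ge k-1$, is also what the paper does. To discount $X$ there, you need the upper bound \eqref{outsideH'baddeg} on $d(x,H')$ together with the definition of $X$; \eqref{baranew} is a lower bound and does not help.

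The real gap is in the second case when $d':=d(v,G')$ is large. Your accounting requires at least $2d'$ vertices of $T$ to land in $G'$, and you plan to get them from a family $\mathcal C$ embedded \emph{entirely} inside $G'$. But all you know is $\delta(G')\ge 11\mu k$, which guarantees only about $11\mu k$ vertices in $G'$. Nothing bounds $d'$ by $O(\mu k)$. As soon as $2d'>11\mu k$, the bound $d(v,V(H')\cup X)\ge k-1-2d'$ no longer leaves room for $L_t$ at the end.

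There are two further problems:
\begin{itemize}
\item Lemma~\ref{subsetofcompsofT2} with a small $b$ need not apply, since all nonsingleton components of $T-t$ may be large.
\item Your fallback is false. Having fewer than $d'$ nonsingleton components does not force $t$ to have $k/2$ leaf neighbours; for example, $T-t$ could be two components of size about $k/2$ plus $\mu k+1$ leaves.
\end{itemize}

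\textbf{The missing idea (the paper's Case~1).} If $d'\ge\mu k$, drop the factor-$2$ weighting altogether:
\begin{itemize}
\item Put $\mu k$ leaves of $L_t$ into $N(v)\cap V(G')$.
\item Immediately put all other children of $t$, leaves included, into $N(v)\cap V(H)$. This is possible because $d_T(t)<\frac34 k$ by Observation~\ref{fewleaves}, while $v$ has more than $\frac34k+20\mu k$ neighbours in $H$.
\item Finish greedily in $H$; the $\mu k$ vertices placed outside $H$ give the needed slack.
\end{itemize}

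\textbf{The case $d'<\mu k$.} Only here use Lemma~\ref{subsetofcompsofT2}, with $b=k/2$, which is valid by Remark~\ref{theremark} because $t$ is a $\frac k2$-separator. Do not require $\bigcup\mathcal C$ to fit inside $G'$. Instead, root it in $N(v)\cap V(G')$ and embed it using as much of $G'$ as possible. Then either all of it, at least $2d'$ vertices, lands in $G'$, or at least $11\mu k>2d'$ vertices do. In both cases $v$ has enough free neighbours in $V(H')\cup X$ for the reserved leaves at the end.
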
 
        \begin{proof}
        Assume otherwise. Then the set $L$ of leaves of $T$ that are adjacent to~$t$ has size exceeding $\mu k$. We claim that   
        \begin{equation}\label{Xinsideclaim}
            |X|< 3\bar a.
        \end{equation} Indeed, otherwise~\eqref{baranew2} implies that $H'$ has a vertex $w$ of   degree at least $k-1$ in $G$ (and degree at least $k/2$ in $H$). We embed $t$ in $w$, and embed the remainder of $T-L$   greedily in~$H$. 
    This is possible because of the high minimum degree of $H$ and  since $t$ has at most $\frac{k}{2}$ non-leaf children    while $w$ has more than $\frac{k}{2}$ neighbours in $H$. We finish by embedding $L$ greedily in $G$. This proves~\eqref{Xinsideclaim}.
    
    Let $v^*$ be  
         a vertex from $V(H')\cup X$ that maximises $$f(v):=|N(v) \cap (V(H')\cup X)|+2|N(v)\cap V(G')|.$$ Note that  
            \begin{equation}\label{fv*}
        f(v^*)>k-2
           \end{equation}
         as otherwise, summing $f(v)$ over all $v\in V(H')\cup X$, we obtain that 
          $$
          |V(H')\cup X|\cdot (k-2) \ge 2e(V(H')\cup X) + 2e(V(H')\cup X,V(G')),
          $$
          a contradiction to Observation~\ref{minimalityOfG}. 
         By~\eqref{outsideH'baddeg} and using~\eqref{bara} and~\eqref{Xinsideclaim}, each vertex $x\in X$ has less than  $\frac{(3+100\mu)k}{4}+3\bar a \le \frac{3k}{4}+ 30\mu k$ neighbours in $V(H')\cup X$. Furthermore, the definition of $X$ implies that $x$ has less than $11\mu k$ neighbours in $G'$. So $v^*\notin X$ and thus by~\eqref{fv*},
         $v^*\in V(H')$. Hence by~\eqref{not2muk} and by the definition of $H'$, it follows that 
         \begin{equation}\label{v*morethan}
     \text{$v^*$ has  more than $\frac{3+100\mu}{4} k-2\mu k\ge \frac{3k}{4}+20\mu k$ 
neighbours in~$H$.}
\end{equation}

{\bf Case 1: $|N(v^*)\cap V(G')|\ge \mu k$.} We  embed in $G'$ a set of  $\mu k$ leaves adjacent to~$t$. 
We  embed all remaining neighbours of $t$ in $H$. This is possible by~\eqref{v*morethan} and since by Observation~\ref{fewleaves} we can assume $t$ has at most  $3k/4$ neighbours. 
We finish by embedding the remainder of $T$ in $H$ (which has minimum degree at least $(1-\mu)k$). 

{\bf Case 2:  $|N(v^*)\cap V(G')|< \mu k$.}  It follows from~\eqref{not2muk}, \eqref{bara}, \eqref{fv*} and~\eqref{Xinsideclaim} that  
 \begin{equation}\label{nbsofv*inHin2ndcase}
 \text{$|N(v^*)\cap V(H)|\ge k-2-2\mu k-|X|-|H'-H| 
 >k-11\mu k$.}
 \end{equation}

 Since $t$ is a $k/2$-separator, each nonsingleton component of $T-t$ has size at most $k/2$, while there are at most $k/2\le k-2d-1$ leaves adjacent to~$t$ by Observation~\ref{fewleaves}. So because of Remark~\ref{theremark}, we can apply Lemma~\ref{subsetofcompsofT2}   with input $b:=k/2$ and $d:=|N(v^*)\cap V(G')|$   to obtain a set $\mathcal C$ of at most $d$ nonsingleton components of $T-t$ containing between $2d$ and $\frac{k}{2}$ vertices.
 
We embed the components of $\mathcal C$, starting with their roots, which are embedded in $G'$, and embedding the remainder of $\bigcup\mathcal C$ anywhere  in~$G$, using as much of $G'$ as possible.
 We can do this as $\delta(G)> (k-1)/2$ by Observation~\ref{minDegofG}. By~\eqref{mindeggg} and~\eqref{Xinsideclaim},   at least $\min\{11\mu k, 2d\}$ vertices of $\bigcup\mathcal C$ are embedded in~$G'$, and if we embed any  of $\bigcup\mathcal C$
 in $H' \cup X$ then at least $11\mu k$ vertices are embedded in $G'$.
 So by~\eqref{nbsofv*inHin2ndcase}, we can embed all remaining neighbours of $t$ in $H$, which we do if $11\mu k\le 2d$. In this case we set $L':=\emptyset$. If $2d<11\mu k$ we let $L'$ be a set  of $\mu k$ leaves adjacent to $t$, and
  embed in $H$ all other  remaining neighbours of $t$ (while leaving $L'$ for a later stage).

We then embed the  remainder of $T-L'$ greedily  in $H$, which is possible as $\delta(H)\ge (1-\mu)k$, and since either $11\mu k$ vertices were embedded in~$G'$ or $|L'|=\mu k$. We are done if  $L'=\emptyset$. Otherwise,
 we finish by embedding $L'$  into unused neighbours of~$v^*$. This is possible since $v^*$ has more than $k-2-2d$ neighbours in  $V(H')\cup X$ by~\eqref{fv*},
 and since  at least $2d$ vertices were embedded in $G'$.   
          \end{proof}

     Set     
          $$H'':=G[X\cup V(H')]$$ and note that 
          
\begin{equation}\label{H'''}
|H''|\le |H'|+|X|  
\le k+6\mu k+1
\end{equation}
 by \eqref{not2muk}  and by the definition of $X$ and~\eqref{bara}. 
Set $$H^*:=G[V(H)\cup V_{H'}(k-1, H'')].$$
Then $H\subseteq H^*\subseteq H'\subseteq H''.$
By~\eqref{not2muk} and by  the definition of $X$ and~\eqref{bara},
        \begin{equation}\label{H*mindeg}
       \delta(H^*)\ge 
       \min \big\{(1-\mu)k, k-1-|(H'-H)\cup X| \big\} 
       \ge  k-6\mu k.
        \end{equation}
        and

         \begin{equation}\label{H*size}
       |H^*|\le  |H'|\le |H|+2\mu k\le k+3\mu k.
        \end{equation}
        In particular,
        \begin{equation}\label{H*nonadajcencies}
       \text{each  $v\in V(H^*)$ sees all but at most
$9\mu k$  vertices of $H^*$.}
  \end{equation}
  
We need the following claim.

\begin{claim}
\label{Aug14claim1} If $T$ has at least $100\mu k$ leaves then $|V_{H'}(k-1,H'')| < 33\mu k$.
\end{claim}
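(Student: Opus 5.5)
We argue by contradiction: assume $T$ has at least $100\mu k$ leaves and that $W:=V_{H'}(k-1,H'')$ satisfies $|W|\ge 33\mu k$. The plan is to embed $T$ minus a suitable set of leaves inside $H^*$, to send the parents of those leaves to distinct vertices of $W$, and to attach the leaves last using the large degrees of $W$ in $H''$. This is the matching-and-expansion scheme from the overview.

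\textbf{Choosing the leaves to remove.} Let $L$ be the set of leaves of $T$ and $P$ the set of their parents.
\begin{itemize}
\item By Observation~\ref{fewleaves} we may assume no vertex has $k/2$ or more leaf children.
\item We choose a set $L^*\subseteq L$ with $|L^*|\ge 33\mu k$ (or at least $20\mu k$, say) together with its parent set $P^*$, such that either $|P^*|\le 33\mu k$, or each parent in $P^*$ carries exactly one leaf of $L^*$.
\item In both cases we can injectively assign the vertices of $P^*$ to vertices of $W$. The point is that each parent will then have an image of degree at least $k-1$ in $H''$.
\item If many leaves share few parents, we simply take all leaf children of a few parents until $|L^*|\ge 20\mu k$, using at most $|W|$ parents.
\end{itemize}

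\textbf{Embedding $T-L^*$.} We embed $T-L^*$ top-down into $H^*$. Two cases:
\begin{itemize}
\item For a parent $p\in P^*$ whose own parent is already embedded at $u$, we need $u$ to have an unused neighbour in $W$ that is not yet used. By \eqref{H*nonadajcencies}, together with $|W|\ge 33\mu k$ and the fact that $W\subseteq V(H^*)$, each $u\in V(H^*)$ sees at least $24\mu k$ vertices of $W$.
\item Every other vertex is embedded greedily, using \eqref{H*mindeg}. This gives $\delta(H^*)\ge k-6\mu k$, while $|T-L^*|\le k-20\mu k$, so there is always room.
\end{itemize}

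\textbf{The main obstacle.} The difficulty is keeping enough vertices of $W$ available when many parents must be placed. We can place them greedily as long as fewer than $24\mu k$ of them have been used so far, which is why $|P^*|$ is capped at about $20\mu k$. To keep $W$ from being eaten up by ordinary vertices, we reserve $W$ (apart from the prescribed placements) and embed non-parent vertices into $V(H^*)\setminus W$. That set still gives minimum degree at least $k-6\mu k-|W|$, and this suffices only if $|W|$ is not too large. If $|W|$ is large, we instead let ordinary vertices use $W$ freely, because the parents then still find unused $W$-neighbours. Balancing these two regimes is the delicate point. Rooting $T$ at a parent in $P^*$ may also help.

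\textbf{Attaching the leaves.} Each $p\in P^*$ sits at a vertex of degree at least $k-1$ in $H''$. Before the leaves are attached, exactly $k-|L^*|$ vertices are used, so fewer than $k-1$ of its neighbours are occupied. Attaching leaves one at a time, we keep at most $k-2$ used vertices until the last leaf, so every $p\in P^*$ always has a free neighbour. Hence all of $L^*$ embeds in $H''\subseteq G$. This contradicts the assumption that $T$ does not embed in $G$, which proves the claim.
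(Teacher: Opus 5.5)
\textbf{There is a genuine gap.} Your scheme is the right one: parents of some leaves go to vertices of $W=V_{H'}(k-1,H'')$, the rest of $T$ goes into $H^*$, and those leaves are attached last. But the step you call ``the delicate point'' is where the whole proof lies, and neither of your two regimes works as stated.
\begin{itemize}
\item Reserving all of $W$ costs up to $|W|$ in degree, and nothing prevents $|W|$ from being close to $|H'|$.
\item Letting ordinary vertices use $W$ freely fails whenever the number of embedded vertices exceeds $|W|-9\mu k$. Ordinary vertices can then occupy every $W$-neighbour of the image $u$ at which a parent in $P^*$ must be attached.
\item Shrinking $W$ to a reserved set of exactly $33\mu k$ vertices does not rescue your count ``$|T-L^*|\le k-20\mu k$ versus $\delta(H^*)\ge k-6\mu k$''. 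A vertex of $H^*$ has only $k-6\mu k-33\mu k=k-39\mu k$ neighbours outside the reserved set. That is fewer than the roughly $k-20\mu k$ ordinary vertices your plan may have to place there, for instance when the leaves of $L^*$ share few parents.
\end{itemize}

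\textbf{How the paper closes it.} Fix $Z^*\subseteq W$ with $|Z^*|=33\mu k$. In the constrained phase embed only $T-L$, with all leaves removed, so at most $k-100\mu k$ vertices.
\begin{itemize}
\item Parents of leaves go into $Z^*$ if possible, and all other vertices go into $H^*-Z^*$. The latter is always possible, since $k-39\mu k$ exceeds the number of used vertices.
\item By \eqref{H*nonadajcencies}, a parent fails to find a free neighbour in $Z^*$ only once at least $24\mu k$ vertices of $Z^*$ are already used, all of them by parents of leaves.
\item So either all parents of leaves lie in $Z^*$, or at least $24\mu k$ of them do. Either way there is a set $L'$ of at least $20\mu k$ leaves whose parents lie in $Z^*$, and no $P^*$ has to be chosen in advance.
\item Then embed $L\setminus L'$ freely in $H^*$, which has room because $|T-L'|\le k-20\mu k$ and $\delta(H^*)\ge k-6\mu k$. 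Finally attach $L'$ exactly as in your last paragraph, which is correct.
\end{itemize}

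Your pre-selection approach can also be repaired. Reserve $33\mu k$ vertices of $W$, and add parents together with all their leaf children until $|L^*|+|P^*|\ge 40\mu k$. This uses at most $20\mu k$ parents, and the degree budget then closes: ordinary vertices number at most $k-40\mu k$, fewer than $k-39\mu k$. Some such quantitative choice is essential, and it is missing from your proposal.
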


\begin{proof}
    Suppose the claim is not true and let $Z^*$ be an arbitrary set of $33\mu k$ vertices of $V_{H'}(k-1, H'')$.
       Let $L$ be the set of all leaves of  $T$.   By~\eqref{H*mindeg}, we can embed   $T-L$ in $H^*$ obeying the following  conditions: 
       
  \begin{enumerate}[(a)]
  \item parents of leaves of $T$ are embedded in $Z^*$ if possible; and
  \item all other vertices are embedded in $H^*-Z^*$ if possible.
  \end{enumerate}
  Note that $|H^*-Z^*|\ge k-34\mu k$, and so by~\eqref{H*mindeg} and since $|L|\ge 100\mu k$, we can always obey condition~(b). Also, by~\eqref{H*nonadajcencies}, we either embedded all parents of $L$ in $Z^*$ or we used all but at most $9\mu k$ vertices of $Z^*$ for parents of $L$. Either way, for  a set $L'\subseteq L$ of size at least $20\mu k$ it holds that  their parent is embedded in $Z^*$. 
  We embed $L\setminus L'$ in $H^*$ which is possible by~\eqref{H*mindeg}. We finish by embedding $L'$ greedily. This yields an embedding of $T$, a contradiction.
\end{proof}

The next claim has a similar outcome as Claim~\ref{Aug14claim1}, but the opposite premise.

\begin{claim}
\label{Aug14claim2}
 If $T$ has at most  $100\mu k$ leaves then $|V_{H'}(k-1,H'')|< \frac{k}{9}$.
\end{claim}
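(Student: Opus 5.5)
The plan is to argue by contradiction, in the spirit of Claim~\ref{Aug14claim1}. Suppose $T$ has at most $100\mu k$ leaves and $Z:=V_{H'}(k-1,H'')$ has at least $k/9$ vertices; we will embed $T$ in $H''$. Instead of leaves hanging off high-degree images, we will use the long bare paths that a tree with few leaves must contain. Their last vertices will be routed through a reserved part of $Z$. Each vertex of $Z$ has at least $k-1$ neighbours in $H''$, so at the very end a vertex whose parent is embedded in $Z$ always has a free neighbour.

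First, the structure of $T$. A tree with at most $100\mu k$ leaves has at most $100\mu k$ vertices of degree at least $3$. Hence deleting these branch vertices and the leaves leaves at most $200\mu k$ bare paths covering at least $k-200\mu k$ vertices. Since $\mu\le 10^{-10}$, we can pick a family $\mathcal P$ of at most $\mu k$ bare paths, each of length at least $100$, whose interiors together contain between $30\mu k$ and $40\mu k$ vertices. For each $P\in\mathcal P$ with ends $u_P,v_P$, we order $T$ top-down so that all of $T-\bigcup_{P}\mathrm{int}(P)$ comes first, rooted so that both ends of every $P$ are reached before the interior of $P$. The interiors are then embedded last, each one from $u_P$ towards $v_P$. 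A pendant path (ending in a leaf) would be simpler, but we cannot count on having enough of them, so we handle paths with both ends already embedded.

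Second, the host. We fix $Z'\subseteq Z$ with $|Z'|=60\mu k$; this is possible as $|Z|\ge k/9$. By \eqref{H*mindeg}, $H^*-Z'$ has minimum degree at least $k-66\mu k$. So the first part, of at most $k-30\mu k$ vertices, embeds greedily into $H^*-Z'$, except that the part gets stuck only if fewer than about $36\mu k$ vertices remain; we will handle this by putting the first few interior vertices of paths into $H^*-Z'$ as long as possible and reserving only the final stretch. Then, for each $P\in\mathcal P$, we embed the interior vertices of $P$ one by one into $Z'$, each into an unused neighbour of the previous image. By \eqref{H*nonadajcencies}, every vertex of $Z'$ sees all but at most $9\mu k$ vertices of $Z'$, leaving at least $60\mu k-9\mu k-40\mu k>0$ unused candidates at every step.

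The main obstacle is closing each path. The second-to-last interior vertex (its image lies in $Z'$) and $v_P$ (already embedded in $H^*-Z'$) need a common unused neighbour for the last interior vertex. By \eqref{H*nonadajcencies} and \eqref{H*size}, any two vertices of $H^*$ have at least $k-21\mu k$ common neighbours in $H^*$. When closing the last paths, however, almost $k$ vertices have been used, so this bound alone is too weak. This is exactly where $Z$-vertices must do the work: we choose the second-to-last image $z$ from the at least $51\mu k-40\mu k$ available $Z'$-candidates adjacent to the image of $v_P$. The last vertex is then placed in $N_{H''}(z)\cap N(\text{image of }v_P)$, or simply the last interior vertex is itself placed in $Z'$ adjacent to $v_P$'s image. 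I would try the second option first: it needs only that $z$ and $v_P$'s image have a common unused neighbour in $Z'$, which holds since both miss at most $9\mu k$ vertices of $H^*$. If the counting gets tight, I would shrink $\mathcal P$ to paths whose interiors total about $20\mu k$ vertices and enlarge $Z'$ up to $k/9$. This is where the hypothesis $|Z|\ge k/9$ gives ample room. The result is an embedding of $T$, a contradiction.
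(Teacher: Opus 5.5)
Your proposal has a genuine gap: it never leaves $H^*$, and $H^*$ is too small for what you want. You put the first part of $T$ into $H^*-Z'$ and the path interiors into $Z'\subseteq V(H')\subseteq V(H^*)$, and every estimate you use comes from \eqref{H*mindeg} and \eqref{H*nonadajcencies}.

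\begin{itemize}
\item[(1)] \textbf{Cardinality.} By \eqref{H*size}, $|H^*|\le k+3\mu k$. Nothing in the hypotheses prevents $|H^*|<k$ (for instance $H'=H$ with $|H|=\lceil(1-\mu)k\rceil+1$), and then no $k$-vertex tree fits in $H^*$ at all. So your argument, which never really uses the hypothesis on $Z$, proves too much.
\item[(2)] \textbf{Your counts are unavailable.} Even when $|H^*|\ge k$, at most about $3\mu k$ vertices of $H^*$ are unused at the end. The first part has at least $k-40\mu k$ vertices, while $|H^*-Z'|\le k-57\mu k$, so $Z'$ cannot stay reserved. Closing the last paths then needs a common unused neighbour of two fixed vertices among about $3\mu k$ free vertices, while each of them may miss up to $9\mu k$ vertices of $H^*$. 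Shrinking $\mathcal P$ or enlarging $Z'$ does not change this global count.
\item[(3)] \textbf{The degree property does not help where you use it.} By \eqref{mindegH'casenonbip}, a vertex of $Z$ has degree at most $k-2$ in $H'$. So its surplus neighbours in $H''$ lie in $X$, which your scheme never touches; inside $H^*$ a vertex of $Z$ is no better than any vertex of $H$. Moreover, degree at least $k-1$ in $H''$ guarantees a free neighbour of a single vertex, never a common free neighbour of two embedded vertices, which is exactly what closing a bare path needs.
\end{itemize}

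The missing idea is to use the high-degree vertices last, after exhausting everything else of $H''$, and to finish with single-parent extensions rather than path closings. The paper does this as follows.
\begin{itemize}
\item[(a)] It fixes $V^*\subseteq V_{H'}(k-1,H'')$ with $|V^*|=k/10$ and a $\frac{99}{100}k$-separator $t$. It takes a subtree $T'$ consisting of $t$ and some components of $T-t$, with $\frac{98}{100}k\le|T'|\le\frac{99}{100}k$.
\item[(b)] It puts $t$ into $V^*$ and embeds $T'$ top-down in $H''$. It prefers $H''-H^*$, then $H^*-V^*$, and gives priority to children of vertices placed in $H''-H^*$.
\item[(c)] The few-leaves hypothesis is what makes this work. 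It bounds the number of children of the at most $12\mu k$ vertices placed in $H''-H^*$, so these low-degree vertices never get stuck. It also ensures that most recently embedded vertices have children, which forces all of $H''-H^*$ to be used within $\frac35k$ steps and all of $H''-V^*$ within $\frac{19}{20}k$ steps.
\item[(d)] The remaining components of $T-t$ hang off the image of $t$ in $V^*$ and are embedded inside $V^*$. Each vertex of $V^*$ has at least $k-1$ neighbours in $H''$, at most $k-2$ other vertices are in use, and every unused neighbour lies in $V^*$.
\end{itemize}
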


\begin{proof}
Suppose the claim fails and 
     take an arbitrary subset $V^*$ of $V_{H'}(k-1, H'')$ with $|V^*|=\frac k{10}$. 
We let $t$ be a $\frac{99}{100}k$-separator of $T$. We let $T'$ be a tree 
of size between $\frac{98}{100}k$ and $\frac{99}{100}k$ consisting of $t$ and the union of some of the 
components of $T-t$.

We embed $t$ in an arbitrary vertex of $V^*$ and embed as much as possible of $T'$  greedily top-down in $H''$. While doing so, we obey the following rules:
\begin{enumerate}[(a)]
\item if possible, we embed in $H''-H^*$;
\item if we cannot embed in $H''-H^*$, then if possible we embed in $H^*-V^*$;
\item when choosing which  vertex from $T'$ to embed next (among those with embedded parents), we give priority to vertices whose parent is embedded in $H''-H^*$.
\end{enumerate}


Consider  a vertex $s\in V(T')$ which  was embedded that has a child $s'$ in $T'$ which we could not embed. Then by~\eqref{H*mindeg}, $s$ was embedded in $V(H''-H^*)$, and by rule~(c),
 at each step of our embedding after embedding $s$, we have embedded a vertex whose parent was embedded in $H''-H^*$.  
 As 
 $|H''-H^*|\le |H'-H|+|X| \le 5\mu k+1 \le 12\mu k$ by~\eqref{not2muk} and~\eqref{bara}, there are at most $12\mu k$ such parents.  Since $T$ has at most  $100\mu k$ leaves, any fixed subset $P$ of $T$ has at most $|P|+100\mu k$ children, and thus, we only continued embedding for at most $112\mu k$ more steps after embedding~$s$.

On the other hand, in the step before embedding $s$ into some $v\in V(H''-H^*)$, we know that by rule~(a), any vertex from~$N_{H''}(v)$ that has already  been used for the embedding either accommodates a leaf of~$T$, one of the at most $12\mu k$ parents of  vertices embedded in $H''-H^*$,   or~$t$. Thus, there are at most $112\mu k+1$ used vertices in the neighbourhood of $v$ at the moment we embed $s$ in $v$. Hence, at the end of our embedding procedure, there are at most $112\mu k+1+112\mu k=225\mu k$ used vertices in the neighbourhood of $v$, which means we could  have embedded $s'$, a contradiction.

We just proved that
\begin{equation}\label{allofT}
\text{we embedded all of $T'$ following rules (a)--(c).}
\end{equation}

We claim that furthermore
\begin{equation}\label{Zfree}
\text{we used all of $H''-H^*$ in the first $\frac{3}{5}k$ steps of our
embedding.}
\end{equation}
For contradiction, suppose $v\in V(H''-H^*)$ was not used after $\frac{3}{5} k$ steps. As $v\in H''$ and by Observation~\ref{minDegofG}, we know that $v$ sees at least $\frac{k-1}2-11\mu k$ vertices of $H''$. Thus $v$ sees at least $\frac{k-1}2-11\mu k-\frac 25k\ge\frac k{11}$ used vertices of $H''$, of which at most $113\mu k$ are  images of  leaves,  parents of vertices that were embedded in $H''-H^*$, or $t$. 
So $v$ sees at least $\frac k{11}-201\mu k>0$ vertices of $H''$ which accommodate 
 parents of vertices embedded in $H^*$, which means we violated rule~(a) by not using $v$. So~\eqref{Zfree} holds.
 
 Moreover, since we embedded $T$ in a top-down fashion and 
 as $T$ has at most $100\mu k$ leaves, there are at most  $100\mu k$ children of vertices embedded in $H''-H^*$, and these are embedded before other vertices, by rule (c).   So, by~\eqref{Zfree}, for each $w\in V(T)$, 
\begin{align}\label{Zfree2}
\hskip-.1cm\text{
if $w$ is not embedded 
by step $\frac{7}{10} k$, then $w$'s parent is embedded in $H^*$.}
\end{align}
 We claim that 
\begin{equation}\label{Zfree3}
\text{we used all of $H''-V^*$ in the first $\frac{19}{20} k$ steps of our
embedding.}
\end{equation}

Assuming~\eqref{Zfree3} is true, 
 we are able to embed $T-T'$ in $V^*$. Indeed, this is possible as $t$ was embedded in $V^*$ and as by definition, the vertices of $V^*$ all  have degree at least $k-1$ in $H''$ and all their neighbours in $H''-V^*$ have already been used.

All that remains is to prove~\eqref{Zfree3}. 
For this, note that apart from $t$ and  the at most $100\mu k$ children of vertices embedded in $H''-H^*$, we did not embed any vertices of $T'$ into $V^*$ until $H^*-V^*$ had less than $10\mu k$ unused vertices, by~\eqref{H*nonadajcencies} and by rule (b). Let $j$ be the first step  at which the set $S$ of unused vertices of
$H^*-V^*$  has size below $10 \mu k$. 
Then at the end of step $j$ we have used at most $101 \mu k$ vertices of $V^*$ 
and hence $j \le  |H^*-V^*|-10\mu k +101\mu k$.

At step $\frac{19}{20} k$, consider any vertex $v\in S$. Note that $\frac{19}{20} k>j+200\mu k$ by~\eqref{H*size}. Of the last $200\mu k$ vertices that were embedded, at most $100\mu k $ are leaves by hypothesis. By~\eqref{H*nonadajcencies}, by~\eqref{Zfree2} and since $S\subseteq V( H^*)$, at most $10\mu k$ are not adjacent to $v$ and since $|S|<10\mu k$,
 less than $10\mu k$ are the parent of a vertex embedded in $S-v$. 
 So at least $80\mu k>0$ of these vertices had children which could have been embedded in $v$, and thus $v$ must have been used. This proves~\eqref{Zfree3}. 
\end{proof}

We are now able to show that $X$ is small.

\begin{claim}
\label{claimwas(60)}
  $|X| < 2.8\bar a$.
\end{claim}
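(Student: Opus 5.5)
The plan is to double count the edges from $H'$ into $H''=G[X\cup V(H')]$. A lower bound comes from the average degree $a=k-2-\bar a$ of $H'$ together with \eqref{baranew}. An upper bound comes from \eqref{mindegH'casenonbip} combined with Claims~\ref{Aug14claim1} and~\ref{Aug14claim2}. Comparing the two should force $|X|$ to be at most about $2.65\,\bar a$.

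\textbf{Lower bound.} Summing $d_{H''}(v)$ over $v\in V(H')$ gives
\[
\sum_{v\in V(H')} d_{H''}(v)=a|H'|+e(H',X).
\]
The set $X$ is built so that the process stops as soon as $|X|\ge 3\bar a$, so $|X|\le 3\bar a+1$ at all stages. Hence the estimate behind \eqref{baranew} applies, and $e(H',X)>\frac{49}{100}|X|\,|H'|$. Therefore
\[
\sum_{v\in V(H')} d_{H''}(v) > (k-2-\bar a)|H'|+\tfrac{49}{100}|X|\,|H'|.
\]

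\textbf{Upper bound.} By \eqref{mindegH'casenonbip}, every $v\in V(H')$ has $d_{H'}(v)\le k-2$, and hence $d_{H''}(v)\le k-2+|X|$. If $v\notin V_{H'}(k-1,H'')$, then directly $d_{H''}(v)\le k-2$. By Claim~\ref{Aug14claim1} (if $T$ has at least $100\mu k$ leaves) or Claim~\ref{Aug14claim2} (otherwise), in either case $|V_{H'}(k-1,H'')|<\max\{33\mu k,k/9\}=k/9$. Consequently
\[
\sum_{v\in V(H')} d_{H''}(v)\le (k-2)|H'|+\tfrac{k}{9}|X|.
\]

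\textbf{Combining.} The two bounds give $|X|\big(\frac{49}{100}|H'|-\frac k9\big)<\bar a|H'|$. Since $|H'|\ge|H|\ge(1-\mu)k$ (as $\delta(H)\ge(1-\mu)k$), we have $\frac{k}{9|H'|}\le 0.112$. This yields
\[
|X|<\frac{\bar a}{0.49-0.112}<2.7\,\bar a<2.8\,\bar a,
\]
using $\bar a>0$ from \eqref{bara2}.

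The main point to check carefully is that \eqref{baranew} is legitimately available with the constant $\frac{49}{100}$ at this stage. This should hold because every vertex of $X$ has fewer than $11\mu k$ neighbours in $G'$ at the time it is added, at most $|X|\le 3\bar a+1$ neighbours inside $X$, and degree at least $(k-1)/2$ in $G$ by Observation~\ref{minDegofG}. Together with \eqref{bara} and \eqref{H'''}, this leaves more than $\frac{49}{100}|H'|$ neighbours in $H'$. Everything else is routine arithmetic with the slack provided by $\mu\le 10^{-10}$.
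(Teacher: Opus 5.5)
Your proposal is correct and is essentially the paper's argument. The paper likewise combines \eqref{baranew} with the bound $|V_{H'}(k-1,H'')|<\frac k9$ from Claims~\ref{Aug14claim1} and~\ref{Aug14claim2} to get $\frac{49}{100}|H'|\,|X|\le |E(H',X)|<\frac k9|X|+|H'|\bar a$; it only finishes the arithmetic slightly more crudely, via $\frac{48}{100}k\le\frac{49}{100}|H'|$ and $|H'|\le(1+3\mu)k$, instead of dividing by $|H'|$ as you do, and your check that \eqref{baranew} is valid because the construction keeps $|X|<3\bar a+1$ is exactly the right point to verify.
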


 \begin{proof}
 By Claims \ref{Aug14claim1} and \ref{Aug14claim2}, we know that
$|V_{H'}(k-1, H'')| <\frac{k}{9}$.   Hence at most~$\frac k9$ vertices of $H'$ have degree exceeding $k-2$,
while also having degree at most $(k-2)+|X|$. 
Moreover, by~\eqref{baranew}, the average degree from $H'$ to $X$ exceeds $\frac{49}{100}|X|$. 
Thus $$
\frac{48}{100}k |X| \le |H'|\cdot \frac{49}{100} |X| \le |E(H',X)| < \frac k9 \cdot |X| + |H'|\cdot \bar a,
$$
 and therefore, by~\eqref{not2muk},
 $
\frac{36}{100}k |X| <  |H'|  \bar a \le (1+3\mu ) \bar a k.
$
Hence $|X| < 2.8\bar a$, as desired.
 \end{proof}

Combining Claim~\ref{claimwas(60)} with~\eqref{mindeggg} we obtain
\begin{equation}
    \label{mindegggforreal}
    \delta(G')\ge 11 \mu k.
\end{equation}
 
 Let $b$ be the average degree  of the vertices of $H'$ in~$H''$,
 that is, we set $$b:=\frac{1}{|H'|}\sum_{v\in V(H')}d(v, H'').$$

 By~\eqref{mindegH'casenonbip}, the maximum degree of $H'$ is bounded from above by $k-2$.
       So 
$$|H'|\cdot (k-2)+|V_{H'}(k-1, H'')|\cdot |X|\ge b|H'|,$$
and thus, by Claim \ref{claimwas(60)}, we obtain that
\begin{equation}\label{VH'H'''}
\text{$2.8\bar a \cdot |V_{H'}(k-1, H'')| \ge (b-k+2)|H'|$.}
\end{equation}

                 \begin{claim}\label{avdegH'toH'cupX}
If $T$ has at least $100\mu k$ leaves then  $b\le k-2+100\mu \bar a.$
          \end{claim}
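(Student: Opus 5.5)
Assume $T$ has at least $100\mu k$ leaves and, for contradiction, that $b>k-2+100\mu\bar a$. Then \eqref{VH'H'''} gives
\[
2.8\bar a\,|V_{H'}(k-1,H'')|\ge (b-k+2)|H'|>100\mu\bar a|H'|.
\]
Since $\bar a>0$ by \eqref{bara2}, we may cancel $\bar a$ and obtain $|V_{H'}(k-1,H'')|>\frac{100}{2.8}\mu|H'|\ge 35\mu|H|\ge 35(1-\mu)\mu k>33\mu k$. Here we use that $|H|\ge\delta(H)+1>(1-\mu)k$.

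This contradicts Claim~\ref{Aug14claim1}, which applies because $T$ has at least $100\mu k$ leaves. Hence $b\le k-2+100\mu\bar a$.

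The argument is a direct combination of \eqref{VH'H'''} with Claim~\ref{Aug14claim1}. The only point needing care is that \eqref{VH'H'''} requires Claim~\ref{claimwas(60)}, which we already have, and that the constants compare correctly. The key arithmetic is $100/2.8\approx 35.7$, which exceeds $33$ even after losing the factor $(1-\mu)$ from bounding $|H'|$ below by $|H|>(1-\mu)k$. So no real obstacle is expected; the main thing to check is this chain of inequalities.
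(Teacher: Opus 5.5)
Your proof is correct and is essentially the paper's argument: both plug the assumption $b>k-2+100\mu\bar a$ into \eqref{VH'H'''} to get $|V_{H'}(k-1,H'')|>\frac{100}{2.8}\mu|H'|>33\mu k$, which contradicts Claim~\ref{Aug14claim1}. You also make explicit two points the paper leaves implicit: that $\bar a>0$, so cancelling $\bar a$ is legitimate, and that $|H'|\ge|H|>(1-\mu)k$.
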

          
        \begin{proof}
        Suppose $T$ has at least $100\mu k$ leaves but  $b> k-2+100\mu \bar a.$ 
       Then by~\eqref{VH'H'''}, $$|V_{H'}(k-1, H'')|\ge \frac{100\mu \bar a}{2.8\bar a}|H'|> 33\mu k,$$
       a contradiction to Claim \ref{Aug14claim1}.
    \end{proof}

   In the proof of the next claim, the following quick definitions and observations will be useful. (We will use the set $Z'$ in the next claim, while the set~$Z$ is used only in the proof of Claim~\ref{Tatmost200leaves}, however, because of the similarity of these sets, it seems easiest to introduce them both here.)
 Set $$Z:=V_{H'}(k-2-\frac{101}{100}\bar a, H')
\text{ \ \ and \ \ }
Z':=V_{H'}(k-2-100\bar a, H').
 $$
By \eqref{mindegH'casenonbip}, we know that $\sum_{v\in Z}d_{H'}(v)\le |Z| \cdot (k-2)= |Z| \cdot (a+\bar a)$, while $$\sum_{v\in V(H')\setminus Z}d_{H'}(v)< |V(H')-Z|(k-2-\frac{101}{100}\bar a) \ \le \ |V(H')-Z| (a-\frac{\bar a}{100}).$$
So as $a$ is the average degree of $H'$, we have $ |Z| \cdot \bar a\ge |H'-Z|\cdot \frac{1}{100}\bar a $ and thus 
    \begin{equation}\label{ZZZZ}
    |Z| \ge \frac{|H'|}{101} \ge \frac{(1-\mu) k}{101} \ge \frac{k}{102}.
   \end{equation}
  Similarly, $ |Z'|\cdot \bar a \ge  |H'-Z'|\cdot 99\bar a$ and thus
    \begin{equation}\label{ZZZZ'}
    |Z'| \ge  \frac{99|H'| }{100}\ge   \frac{89}{90}k.
   \end{equation}

 Note that  $$H''':=G[V(H) \cup Z']$$ 
 contains at most $|H'|\le k+3 \mu k$
vertices and has minimum degree at least $\min\{(1-\mu)k, k-2-100\bar a\}\ge k-2\mu k-100\bar a$.
So
    \begin{equation}\label{ZZZZadj}
 \text{each vertex  in  $H'''$ sees all but at most $5\mu k +100\bar a$  vertices of $H'''$.}
          \end{equation}
                
                Let $\Delta'$ be the maximum over the degrees to $G'$ of vertices from $H'$.
            
               \begin{claim}\label{notcase2.1'}
     If $T$ contains a set $L$ of  $100\mu k$ leaves 
     then   there is no subset $\mathcal C$ of the nontrivial components of
     $T-t$ with $|\mathcal C|\le \Delta'$ and $200\bar a\le |\bigcup \mathcal C|\le k/2$.
         \end{claim}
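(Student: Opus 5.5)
We argue by contradiction: given such a set $\mathcal C$, we embed $T$ in $G$. Let $v^*\in V(H')$ be a vertex with exactly $\Delta'$ neighbours in $G'$. We may assume $v^*\in Z'$, or else move to a suitable vertex of $Z'$; we come back to this below. We embed $t$ in $v^*$ and the (at most $\Delta'$) roots of the components in $\mathcal C$ into distinct neighbours of $v^*$ in $G'$. We then extend greedily inside $G'$ as far as we can. By \eqref{mindegggforreal}, $\delta(G')\ge 11\mu k$, so at least $\min\{|\bigcup\mathcal C|,11\mu k\}$ vertices of $\bigcup\mathcal C$ land in $G'$. Any vertices of $\bigcup\mathcal C$ that cannot be placed in $G'$ are embedded into $H''$. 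This is possible because $\delta(G)\ge (k-1)/2$ (Observation~\ref{minDegofG}) and $|\bigcup\mathcal C|\le k/2$. The target is that at least $200\bar a$ vertices, or all of $\bigcup\mathcal C$, are embedded outside $H'''$; when $11\mu k<200\bar a$ we use $\bar a<\mu k$ and reserve leaves instead.

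Next we embed the remaining children of $t$ into $N(v^*)\cap V(H''')$, keeping back some leaves for the end. Since $t$ is a $k/2$-separator, every remaining component has at most $k/2$ vertices. By Claim~\ref{thasatmostmukleafch}, $t$ has at most $\mu k$ leaf children, and by Observation~\ref{fewleaves} we may assume $d(t)\le k/2$. Because $v^*\in Z'$, it has at least $k-2-100\bar a$ neighbours in $H'$. After discounting the at most $|H'-H|<2\mu k$ vertices of $H'-H$, this leaves enough room in $H'''$ for the children of $t$.

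We then embed the rest of $T-\bigcup\mathcal C$ greedily top-down in $H'''$, holding back a set $L'$ of leaves of $T$ taken from the given set $L$ of $100\mu k$ leaves. By \eqref{ZZZZadj}, every vertex of $H'''$ misses at most $5\mu k+100\bar a$ vertices of $H'''$. The non-leaf part therefore fits greedily, since $|H'''|\ge |Z'|\ge \frac{89}{90}k$ and we hold back $|L'|\ge 100\mu k$, which covers the $5\mu k+100\bar a$ defect using $\bar a<\mu k$. Finally we place the leaves of $L'$. Each of them needs an unused neighbour of its parent's image $u\in V(H''')$. Such a $u$ has degree at least $k-2-100\bar a$ in $H'$, or at least $(1-\mu)k$ if $u\in H$. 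The leaves can be placed because at least $200\bar a$ vertices of $T$ were embedded outside $H'''$. This gives a surplus of about $100\bar a$ over the degree deficit at each parent, or the full $100\mu k$ in the other case.

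The main obstacle is the very last step. A parent embedded in $H'-H$ with degree only around $k-2-100\bar a$ must still see enough unused vertices when many leaves hang from vertices near the end. To handle this, I would embed all parents of $L'$ into $H$ when possible, where the minimum degree is $(1-\mu)k$ and $\bigcup\mathcal C$ supplies at least $\min(200\bar a,\,\cdot)$ external vertices. The bound $200\bar a$ in the statement is calibrated exactly to beat the $100\bar a$ deficit in $Z'$, counted twice. The choice of $v^*$, namely that a vertex achieving $\Delta'$ can be taken in $Z'$ or replaced by one there, also needs care. I would use \eqref{outsideH'baddeg} and the definition of $G'$ to argue that a vertex of $H'\setminus Z'$ has degree below $k-2-100\bar a$ in $H'$, and that its $\Delta'$ edges to $G'$ still let us embed $t$'s neighbourhood. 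If that fails, I would fall back to embedding $t$ in $v^*$ anyway and sending more of $N(t)$ to $G'$.
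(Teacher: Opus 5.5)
\textbf{Gap in the final step.} Your plan fails where the held-back leaves are placed, and the remedy you propose goes the wrong way.

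The only nontrivial case is when all of $\bigcup\mathcal C$ lands in $G'$ with $200\bar a\le x=|\bigcup\mathcal C|\le \mu k$. If $x>\mu k$, you simply finish greedily inside $H$, using $\delta(H)\ge(1-\mu)k$. This is also how the case $11\mu k<200\bar a$ should be handled, not by working in $H'''$.

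In the critical case, $\bar a$ may be far smaller than $\mu k$, so only about $200\bar a$ vertices of $T$ are guaranteed to lie outside $H'$. Suppose a parent $u$ of a held-back leaf is embedded in $V(H)\setminus Z'$. By the definition of $Z'$, $u$ has fewer than $k-2-100\bar a$ neighbours in $H'$, and possibly only $(1-\mu)k$. When its last leaf is placed, up to $k-2-200\bar a$ of its potential neighbours in $H'\cup X$ may already be used. So $u$ can be saturated whenever $200\bar a$ is below roughly $\mu k$. Holding back leaves does not help, since the held-back leaves themselves fill these neighbourhoods.

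Hence embedding parents of $L'$ ``into $H$ when possible'' does not work. What works is putting every parent of $L$ into $Z'$: there the degree $k-2-100\bar a$ beats the $k-2-200\bar a$ used vertices, because $\bar a>0$.

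\textbf{The missing routing argument.} You need a procedure that guarantees every parent of $L$ ends up in $Z'$:
\begin{enumerate}
\item Embed the nonleaf children of $t$ into $Z'$, which is possible by \eqref{ZZZZ'}.
\item Embed $T-L$ top-down in $H'''$, sending vertices with a child in $L$ to $Z'$ and all other vertices to $H-Z'$. By \eqref{ZZZZadj}, this can only get stuck once $Z'$ or $H-Z'$ is down to $5\mu k+100\bar a$ unused vertices.
\item The former cannot happen: the parents of $L$ together with $N(t)$ number far fewer than $|Z'|-5\mu k-100\bar a$.
\item After the latter happens, send everything that remains into $Z'$. This never gets stuck, because $200\bar a$ vertices lie in $G'$ and the $100\mu k$ leaves of $L$ are still unembedded.
\end{enumerate}
Relatedly, your greedy count for $T-L'$ must also use the $200\bar a$ vertices in $G'$, since $5\mu k+100\bar a$ can exceed $100\mu k$.

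\textbf{The choice of $v^*$.} Assuming $v^*\in Z'$ is unjustified. You also cannot swap $v^*$ for another vertex, because you need its $\Delta'$ neighbours in $G'$ to host up to $\Delta'$ roots of $\mathcal C$.

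The assumption is also unnecessary. By construction of $H'$, every vertex of $H'$ has more than $\frac{3+100\mu}{4}k-2\mu k$ neighbours in $H$. By Claim~\ref{thasatmostmukleafch}, the used vertices of $H$ plus the remaining children of $t$ number at most $|\bigcup\mathcal C|+\frac12|V(T)-t-\bigcup\mathcal C|+\frac{\mu k}{2}<\frac{3+90\mu}{4}k$. So any vertex of $H'$ has room for $N(t)$. Note that Observation~\ref{fewleaves} alone does not give $d(t)\le k/2$.
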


  \begin{proof}       
        Assume otherwise. 
        Let $v$ be a vertex in $H'$ with   $\Delta'$ neighbours in~$G'$. We embed $t$ in $v$. 
Observation~\ref{minDegofG} allows us to embed  $\bigcup\mathcal C$, starting with the roots, which are embedded in $G'$, and embedding the remainder of $\bigcup\mathcal C$ anywhere  in~$G$, using as much of $G'$ as possible. 
          Let $x$ be the number of vertices  of $\bigcup\mathcal C$ embedded in $G'$. By~\eqref{mindegggforreal}, we know that $x\ge \min\{200\bar a, 11\mu k\}$.

Next, we map the remaining  neighbours of $t$ 
to $H$. 
This is possible 
because~$v\in V(H')$ has  at least $(3+100\mu)k/4-2\mu k$ neighbours in~$H$, while by Claim~\ref{thasatmostmukleafch}, the number of used vertices of $H$ plus the number of neighbours of $t$ outside $\bigcup \mathcal C$ is at most $$|\bigcup \mathcal C|+ \frac{|V(T)-t-\bigcup \mathcal C|}2 + 
         \frac{\mu k}2\le \frac{k-1}2+\frac{|\bigcup \mathcal C|}2+ \frac{\mu k}2<\frac {3+90\mu}4k.$$
         If no vertices of $H$ were used for $\bigcup\mathcal C$, we can (and will) choose to embed all nonleaf neighbours of $t$ in  $Z'$, by~\eqref{ZZZZ'}.
        
 Now, if $x> \mu k$, 
then we can embed the remainder of $T$ greedily in $H$.       
         So we can assume that $x\le\mu k$,  and thus, by~\eqref{mindegggforreal}, only vertices from $G'$ have been used for the embedding so far, and the  neighbours of $t$ were embedded in  $Z'$.
    
    We embed the remaining vertices of $T-L$ top-down from these vertices  in~$H'''$ as follows. If $w$ has a neighbour in $L$ then we  wish to embed $w$ into $Z'$, and otherwise, we  wish to embed $w$ into $H- Z'$. By~\eqref{ZZZZadj}, we can embed as we wish until we have either used all but $5\mu k+100\bar a$ vertices of $Z'$ or all but $5\mu k+100\bar a$ vertices of $H-Z'$. 
     In the former case we would have used $|Z'|-5\mu k-100\bar a$ vertices of $Z'$ for parents of $L$ and for $N(t)$, implying the contradiction $$100\mu k=|L|\ge |Z'|-5\mu k-100\bar a - |N(t)|\ \ge \ \frac k3,$$  where we used~\eqref{bara} and \eqref{ZZZZ'}   for the second inequality, as well as the fact that  $t$ has at most $\mu k$ leaf neighbours by Claim~\ref{thasatmostmukleafch}, and at most $k/2$ non-leaf neighbours.
     
    So we are in 
     the latter case, i.e.~we used all but $5\mu k+100\bar a$ vertices of $H-Z'$. We continue our embedding, now trying to embed in $Z'$ all remaining vertices of $T-L$. We are only forced to stop when each of $Z'$ and $H-Z'$ have at most  $5\mu k+100\bar a$ unused vertices. However, we embedded at least $200\bar a$ vertices outside $H'$, and have not embedded any    vertices of the $100\mu k$ vertices of  $L$ yet. Hence, as $|H'''|\ge |H|\ge (1-\mu )k$, we are never forced to stop, and can embed the rest of $T-L$ in $H'''$.
     It remains to embed~$L$ which we can do since all neighbours of $L$ were embedded in $Z'$, and because of the definition of $Z'$ and the fact that we  embedded at least $200\bar a$ vertices of $T$ in~$G'$.
     \end{proof}

               \begin{claim}\label{notcase2.1}
     If $T$ contains a set $L$ of  $100\mu k$ leaves then  $\Delta'< 100\bar a$. 
         \end{claim}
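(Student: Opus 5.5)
We argue by contradiction: suppose $T$ has a set $L$ of $100\mu k$ leaves and $\Delta'\ge 100\bar a$. The aim is to produce a set $\mathcal C$ of nontrivial components of $T-t$ with $|\mathcal C|\le\Delta'$ and $200\bar a\le|\bigcup\mathcal C|\le k/2$. Claim~\ref{notcase2.1'} forbids exactly such a set, so this gives the contradiction.

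The natural tool is Lemma~\ref{subsetofcompsofT2}, applied to $T$, the vertex $t$ (the $k/2$-separator), $b:=k/2$ and $d:=100\bar a$. The hypothesis $d<b/4$ holds because $\bar a<\mu k$ by~\eqref{bara}, so $d<100\mu k<k/8$. Next we verify $|\bigcup\mathcal B|\ge 2d$ through Remark~\ref{theremark}. Since $t$ is a $k/2$-separator, no component of $T-t$ has more than $k/2=b$ vertices. By Claim~\ref{thasatmostmukleafch}, $t$ has at most $\mu k$ leaf children, and $\mu k\le k-200\bar a-1$ because $\bar a<\mu k$ and $k$ is large. The Remark then yields $|\bigcup\mathcal B|\ge 2d$.

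The Lemma gives $\mathcal C\subseteq\mathcal B$ with $|\mathcal C|\le\lceil d\rceil$ and $200\bar a=2d\le|\bigcup\mathcal C|\le k/2$. It remains to check $|\mathcal C|\le\Delta'$. Since $\Delta'$ is an integer, $\Delta'\ge100\bar a$ implies $\Delta'\ge\lceil100\bar a\rceil=\lceil d\rceil\ge|\mathcal C|$. Also $\bar a>0$ by~\eqref{bara2}, so $d>0$; this does not matter for the Lemma, which allows $d\ge0$, but it keeps the statement meaningful. Thus $\mathcal C$ satisfies every condition in Claim~\ref{notcase2.1'}, contradicting that claim. Hence $\Delta'<100\bar a$.

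The only delicate point is the integrality rounding $\lceil d\rceil\le\Delta'$, and the check that the leaf children of $t$ leave enough mass in nontrivial components. Both are handled above by integrality and by Claim~\ref{thasatmostmukleafch}.
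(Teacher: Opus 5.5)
Your proof is correct and follows the paper's argument: you apply Lemma~\ref{subsetofcompsofT2} with $b=k/2$, justify it via Claim~\ref{thasatmostmukleafch} and Remark~\ref{theremark}, and then contradict Claim~\ref{notcase2.1'}. The only difference is cosmetic: the paper takes $d=\max\{1,\lceil 100\bar a\rceil\}$, while you take $d=100\bar a$ and handle the rounding through the integrality of $\Delta'$, which works equally well.
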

                 \begin{proof} 
                  Assume that $\Delta'\ge 100\bar a$. 
We let $\mathcal C$ be as given by Lemma~\ref{subsetofcompsofT2} 
 for input 
$b:=k/2$ and $d:=\max\{1,\lceil100\bar a\rceil\}$. We can use this lemma because of Claim~\ref{thasatmostmukleafch} and Remark~\ref{theremark}. We reached a contradiction to Claim~\ref{notcase2.1'}.
             \end{proof}

                   Next, let us bound the number of edges in $H'''$ and use this information to bound the number of edges from  $H'''$ to $G'$. These considerations will be used in the next claim, and also at the end of this section.
                   
      By~\eqref{outsideH'baddeg}, 
         there are less than $\frac{3+100\mu}4 k|X|$ edges between $H'$ and $X$. So, letting $$q:=b-a$$
         denote the average degree from $H'$ to $X$, we can write $$|E(H',X)| \le   \frac 23 \cdot\frac{3+100\mu}4 k|X| +  \frac 13\cdot q |H'| \le \frac{k-2}2|X| + 17\mu k |X| + \frac q3 |H'|.$$
              Moreover,   $|X|\le 3\mu k$ 
              by Claim~\ref{claimwas(60)} 
         and~\eqref{bara} and thus $X$ contains at most $\frac{|X|^2}2\le \frac 32 \mu k|X|$ edges. Finally, $H'$ has  $\frac{a}2|H'|=\frac{k-2-\bar a}2|H'|$ edges. Putting all of this together, we see
         $H''=G[V(H') \cup X]$   contains
         \begin{align*}
         |E(H',X)|  +    |E(H')|+|E(X)| 
        &  \le
          \frac{k-2}2|X|  + \frac q3 |H'| + \frac{k-2}2|H'|-\frac{\bar a}2|H'| + 19\mu k |X|
         \\ &
            \le
       \frac{k-2}2 | V(H')\cup X| -  \big(\frac{\bar a}2- \frac q3 \big) |H'| +19 \mu k|X|
         \end{align*}
          edges. Hence, by Observation~\ref{minimalityOfG}, there are more than  $(\frac{\bar a}2 -\frac q3) |H'| -19 \mu k|X|$ edges 
 leaving   $H''$, i.e.~going from $H''$ to $G'$. 
  So by~\eqref{XtoG'} and  using~\eqref{not2muk}, \eqref{bara} and Claim~\ref{claimwas(60)}
  for the second inequality,  we see that 
   \begin{align}\label{EHG'2}
  |E(H, G')| &> (\frac{\bar a}2 -\frac q3) |H'| -30 \mu k|X|-\Delta' |H'-H| \notag \\ & \ge (\frac{\bar a}2 -\frac q3) |H'| -100 \mu \bar a  k   -2\mu \Delta' k. \end{align}  
  
 We are ready for the next claim.
        
              \begin{claim}\label{Tatmost200leaves}
           \text{$T$ has fewer than $100\mu k$ leaves.}
          \end{claim}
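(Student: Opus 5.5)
We argue by contradiction: suppose $T$ has a set $L$ of $100\mu k$ leaves. Then Claims~\ref{Aug14claim1}, \ref{avdegH'toH'cupX} and~\ref{notcase2.1} apply, so $|V_{H'}(k-1,H'')|<33\mu k$, $q=b-a\le 100\mu\bar a$, and $\Delta'<100\bar a$. Substituting these bounds into~\eqref{EHG'2} gives
\[
|E(H,G')| > \Big(\frac{\bar a}{2}-\frac{100\mu\bar a}{3}\Big)|H'| -100\mu\bar a k-200\mu\bar a k \ \ge\ \frac{2\bar a}{5}k .
\]
This is positive by~\eqref{bara2}. The plan is to play this lower bound against an upper bound on the number of edges from $H$ (more precisely from the set $Z$, or from $H'$) to $G'$. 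That upper bound will come from the observation that a vertex with many neighbours in $G'$ and large degree inside $H'$ lets us embed $T$.

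The key step is to show that every vertex $v\in Z$ has fewer than $\bar a/100$ (or some similarly small multiple of $\bar a$) neighbours in $G'$. Suppose $v\in Z$ has $d:=|N(v)\cap V(G')|$ large. We embed $t$ in $v$. By Claim~\ref{thasatmostmukleafch} and Remark~\ref{theremark}, Lemma~\ref{subsetofcompsofT2} (with $b=k/2$ and this $d$) gives at most $d$ nontrivial components of $T-t$ with between $2d$ and $k/2$ vertices in total. We embed them with roots in $N(v)\cap V(G')$, using $G'$ as much as possible, which works by~\eqref{mindegggforreal}. We then proceed as in the proof of Claim~\ref{notcase2.1'}: the remaining children of $t$ go into $Z'$, parents of leaves in $L$ go into $Z'$, and other vertices go into $H-Z'$, using~\eqref{ZZZZadj}. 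The leaves in $L$ are embedded last, and this succeeds because each vertex of $Z'$ has degree at least $k-2-100\bar a$ in $H'$ and at least $2d$ vertices were used outside $H'$. To make the leaf step work with the sharper threshold, I would let parents of leaves go into $Z$ instead of $Z'$ wherever possible; $Z$ is large by~\eqref{ZZZZ}, and its vertices have degree at least $k-2-\frac{101}{100}\bar a$. This forces $2d\ge \frac{101}{100}\bar a$, so each vertex of $Z$ has fewer than roughly $\bar a$ neighbours in $G'$.

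Vertices of $H'\setminus Z$ have degree in $H'$ below $k-2-\frac{101}{100}\bar a$ but can still send up to $\Delta'<100\bar a$ edges to $G'$. So the counting must be weighted. I would redo the edge count behind~\eqref{EHG'2} in the form of Observation~\ref{minimalityOfG} applied to $V(H')\cup X$, keeping track of the degree deficiencies $k-2-d_{H'}(v)$ individually rather than through the average $\bar a$. Each vertex outside $Z$ has deficiency at least $\frac{101}{100}\bar a$, which exceeds the number of its $G'$-edges it can be charged for after the embedding argument above (applied with threshold equal to its own deficiency). For $v\in Z$ the bound from the previous paragraph applies. Summing, the total number of edges to $G'$ plus $X$-edges plus internal edges stays at most $\frac{k-2}{2}|V(H')\cup X|$, contradicting Observation~\ref{minimalityOfG}.

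The main obstacle is the per-vertex embedding step for general $v\in H'$: we must show that if $v$ has deficiency $\beta$ and more than about $\beta/2$ neighbours in $G'$, then $T$ embeds. This needs the leaf-parents placed in vertices whose degree is at least $k-2-2d$ even though $v$ itself is not in $Z$. I would handle it by placing leaf-parents in $Z$ and using the $2d$ vertices sent to $G'$ to absorb $Z$'s deficiency $\frac{101}{100}\bar a$. The remaining constant juggling between $\frac{101}{100}$, the $\frac q3$ term, and the $\mu$-error terms is where care is needed.
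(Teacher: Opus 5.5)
There is a genuine gap in the counting step, even if your per-vertex embedding step is granted.

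First, a slip. Claim~\ref{avdegH'toH'cupX} gives $b\le k-2+100\mu\bar a$. Since $a=k-2-\bar a$, this only yields $q=b-a\le(1+100\mu)\bar a$, not $q\le 100\mu\bar a$. So \eqref{EHG'2} gives $|E(H,G')|$ of order $(\frac{\bar a}2-\frac q3)|H'|$, possibly only about $\frac{\bar a}6|H'|$, and your bound $\frac{2\bar a}{5}k$ is unjustified.

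More fundamentally, the charging you describe is not available. By Claim~\ref{thasatmostmukleafch}, $t$ has at most $\mu k$ leaf children, so the held-back leaves are never attached at the image $v$ of $t$. They hang off leaf-parents placed in a host set such as $Z$. The quantity you must beat is therefore the deficiency of that host set ($\frac{101}{100}\bar a$ for $Z$), never $v$'s own deficiency $k-2-d_{H'}(v)$.

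Rooting components of $T-t$ at $N(v)\cap V(G')$ only guarantees $2d_{G'}(v)$ vertices outside $H'$. You get exactly that many when the components of $T-t$ are paths on two vertices. Meanwhile, Observation~\ref{minimalityOfG} applied to $V(H')\cup X$ only guarantees that $2d_{G'}(v)$ averages about $\bar a-\frac23 q$ over $H$. Even for $q=0$ this is below $\frac{101}{100}\bar a$. It is also tight up to lower-order terms: take $X=\emptyset$, all deficiencies close to $\bar a$, and every vertex of $H$ with about $\bar a/2$ neighbours in $G'$. So no averaging over direct $G'$-degrees can produce the contradiction. In addition, the $-\frac q3$ loss coming from $X$, which may be of order $\bar a$, is left uncompensated in your plan.

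The missing idea is to count vertices outside $H'$ that the image $w$ of $t$ reaches in two steps, via a matching. The paper proceeds as follows.
\begin{enumerate}
\item It applies Lemma~\ref{subsetofcompsofT2} with $d\approx\bar a/10$, chooses $\mathcal C$ with $|\bigcup\mathcal C|$ maximal, and uses Claim~\ref{notcase2.1'}. This shows that $T-t$ has more than $k/4000$ components with at most $2000$ vertices. So every matching edge and every $G'$-neighbour of $w$ can get its own small component.
\item It takes a maximum matching $M_1$ between $H$ and $G'$. By K\"onig's theorem and the degree bounds, $|M_1|\ge|E(H,G')|/(\frac{31}{40}k)\ge\frac58\bar a-\frac49 q$.
\item It extends $M_1$ by edges from $X$ to $H$, using $|X|\ge\frac54 q$ and that each $x\in X$ has at least $k/3$ neighbours in $H$. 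This gives a matching $M_2$ with $|M_2|\ge\frac58\bar a+\frac45 q$.
\item Each $w\in V(H)$ misses at most $2\mu k$ vertices of $H$, so averaging over $w\in V(H)$ yields $w$ and $M\subseteq M_2$ with $|M|+2|N(w)\cap V(G')|\ge(1+\frac1{20})\bar a$. Here the $+\frac23 q$ contributed by matched vertices absorbs the $-\frac q3$, and the matching lifts the total above $\frac{101}{100}\bar a$.
\item One then embeds $t$ at $w$ and places roots of small components in $N(w)\cap(V(G')\cup V(M))$, sending a child across each matching edge. Next, $\mu k$ further small components, each minus one leaf, go inside $Z\cap V(H)$, which $w$ sees almost entirely. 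The $\mu k$ held-back leaves are attached last.
\end{enumerate}
Direct $G'$-degrees alone cannot replace this two-step use of $M_2$, and in particular of the edges from $X$ to $H$.
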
     
          \begin{proof}
          Assume $T$ contains a set $L$ of  $100\mu k$ leaves. Then by
          Claim~\ref{avdegH'toH'cupX},  
          \begin{equation}\label{aboutb}
            \text{$b\le k-2+100 \mu \bar a$,}
         \end{equation}
         and by
         Claim~\ref{notcase2.1}, we have $\Delta'< 100\bar a$, that is,
              \begin{equation}\label{aboutH'100}
        \text{each vertex  of $H'$ has less than $100\bar a$ neighbours in $G'$.}
         \end{equation}
         Moreover, by~\eqref{EHG'2}
    and by~\eqref{aboutH'100} (applied to the vertices of  $H'-H$),  we have   \begin{equation}\label{EHG'}
  |E(H, G')| >  \big(\frac{\bar a}2 -\frac q3\big) |H'| -300 \mu \bar a  k.\end{equation} 
  By~\eqref{aboutb}, 
 $q =b-a\le k-2+100\mu \bar a  -  (k-2-\bar a)= (1+100\mu)\bar a$ and so \eqref{EHG'} becomes 
$$
|E(H, G')| > \big(\frac{\bar a}6 -100\mu\bar a\big)\cdot |H'|  -300 \mu \bar a  k> \frac{\bar a}{10}\cdot |H|.
$$ 
In particular, $\Delta'>\bar a/{10}$. 
Claim~\ref{thasatmostmukleafch} and Remark~\ref{theremark} allow us to use Lemma~\ref{subsetofcompsofT2} with input $b:=k/2$ and $d:=\max\{1,\lceil \bar a/{10}\rceil\}$ to obtain a set of at most $\lceil d\rceil$ components of $T-t$ containing between $2d$ and $k/2$ vertices. 
We let $\mathcal C$ be as given by Lemma~\ref{subsetofcompsofT2} for this input, and among all choices for such a $\mathcal C$ we choose one with $|\bigcup\mathcal C|$ maximum.     Claim~\ref{notcase2.1'},    implies  that $|\bigcup\mathcal C|<200\bar a$.
         Hence, letting $\mathcal C'$ denote the set of all nontrivial components of $T-t$ having at most $2000$ vertices, and noting that by our choice of $\mathcal C$, any nontrivial component of $T-t-\mathcal C$ lies in $\mathcal C'$, we deduce that 
 $|\bigcup\mathcal C'|>k-1-200\bar a-\mu k\ge \frac{999}{1000}k$, where the term $\mu k$ represents the singleton components whose number is bounded by Claim~\ref{thasatmostmukleafch}, and we used~\eqref{bara} for the inequality. So 
     \begin{equation}\label{thesmallcomps}
     |\mathcal C'|>\frac k{4000}.
       \end{equation}

Next, we claim that there is a vertex $w\in V(H)$ and a matching 
$M$ from $H \cap N(w)$ to $G-V(H')-(N(w) \cap V(G'))$ such that 
    \begin{equation}\label{thematching}
        \text{$|M|+2|N(w)\cap V(G')| \ge (1+\frac{1}{20})\bar a$.}
         \end{equation}
         
         Before we show there are $w$ and $M$ satisfying~\eqref{thematching}, let us prove that we can embed $T$ using such $w$ and $M$.
         We embed $t$ in $w$. We set 
$$x:=\min\big\{|M|+|N(w)\cap V(G')|, 2\mu k\big\}$$ and let $\mathcal C_x$ be a subset of $\mathcal C'$ of size $x$ which exists by~\eqref{thesmallcomps}.
We embed 
 the roots of  the components  from $\mathcal C_x$ in $N(w) \cap (V(G')\cup V(M))$. The remainder of these components will be embedded at a later step.
 
By~\eqref{thesmallcomps}, $\mathcal C'\setminus\mathcal C_x$ has a subset $C''$ of size   $\mu k$. Delete one leaf from each  component in $\mathcal C''$ and call the resulting set of components~$\mathcal C_L$ and the set of deleted leaves $L$. 
We embed $\bigcup\mathcal C_L$   in the set $Z\cap V(H)$ which by~\eqref{ZZZZ}, \eqref{not2muk}, \eqref{bara} and by the definition of $Z$, has size at least $\frac k{103}$ and minimum degree at least $\frac{k}{104}$. 
Since  $|\bigcup\mathcal C_L|\le 2000\mu k$, and $w$ sees all but at most $2\mu k$ vertices of $Z\cap V(H)$, such an embedding is possible.

Next, we embed the remainder of the components  from $\mathcal C_x$ as follows. First, for each root embedded in $V(M)$ we embed one of its neighbours in the other endpoint of the edge from $M$. Then, we embed the remainder of  $\bigcup\mathcal C_x$ greedily, preferring vertices in $G'$. 
Observe that by~\eqref{bara}, \eqref{mindegggforreal} and~\eqref{thematching}, after this step
we embedded at least 
$(1+\frac{1}{20} )\bar a$
vertices of $T$ outside  $H'$.
We continue by 
 greedily embedding all that remains of $T-L$ in $H$, which is possible since we have not embedded the set $L$ of size $\mu k$ yet and since $\delta(H)$ is large enough. Finally we greedily embed  $L$, which we can do   since the parents of leaves from $L$ were embedded  in vertices of~$Z$, which by definition have degree at least $\lceil k-2-\frac{101}{100}\bar a\rceil \ge  k-1-\lceil(1+\frac{1}{20})\bar a\rceil$ in $H'$.

It only remains to see that there are $w$ and $M$ fulfilling~\eqref{thematching}. For this, we consider a maximum matching~$M_1$ from $H$ to $G'$. The size of $M_1$ equals the size of a minimum cover for the
edges  between $H$ and $G'$. By~\eqref{aboutH'100}  and by~\eqref{outsideH'baddeg}, each vertex in $H\cup V(G')$ sees less than  $\frac{3+100\mu}{4}k\le \frac{31}{40}k$ of these edges. So by~\eqref{EHG'}, it follows that 
$$|M_1|\ge \frac{|E(H,G')|}{ \frac{31}{40}k}\ \ge \
\Big( \frac{20}{31}\bar a  - \frac{40}{93} q \Big)\frac{|H'|}k- 400 \mu \bar a 
\ \ge\ \frac{5}{8} \bar a-\frac 49 q.$$

By the definition of $q$ as the average degree from $H'$ to $X$, and since the average degree from $X$ to $H'$ is below $ \frac 45|H'|$ by~\eqref{outsideH'baddeg}, we know that $|X|\ge \frac 54q$. Moreover, each vertex of $X$ has degree at least $$\frac{k-1}2-|H'-H|-|X| - 11\mu k \ge \frac k3$$ into~$H$, where we use  Observation~\ref{minDegofG},  \eqref{not2muk}, \eqref{bara}, Claim~\ref{claimwas(60)}  and the definition of $X$.
So we can greedily add some edges from $X$ to $H$ in order to  extend $M_1$ to a matching $M_2$ from $H$ to $G-H'$ with $$|M_2|\ \ge \ \frac 58 \bar a-\frac 49 q+\frac 54 q\ \ge \ \frac 58 \bar a+\frac 45 q.$$

Observe that since each vertex of $V(M_2)\cap V(H)$ sees all but at most $3\mu k$ of the vertices of $H$, it follows that 
on average, a vertex of $H$ sees at least $(\frac 58 \bar a+\frac 45 q)(1-4\mu)\ge  \frac 59 \bar a+ \frac 23 q$ vertices of  $V(M_2)\cap V(H)$. 
Moreover, by~\eqref{EHG'} the average 
degree of a vertex from $H$ into $G'$ is at least 
 $(\frac{\bar a}2 -\frac q3) -400 \mu\bar a$. 
Therefore  the average 
degree of a vertex from $H$ into $ ( V(H) \cap V(M_2))\cup V(G')$   is at least  
$$ \frac 59 \bar a+ \frac 23 q + \frac{\bar a}2 -\frac q3 -800 \mu\bar a \ \ge\ ( 1 + \frac 1{20})\bar a.$$
In particular there is a vertex $w\in V(H)$ with $|N(w)\cap \big ((V(H)\cap V(M_2))\cup V(G') \big)|\ge (1 + \frac 1{20})\bar a$. We let $M\subseteq M_2$ consist of all edges of $M_2$ that meet $N(w)\cap V(H)$ and do not meet $N(w)\cap V(G')$. Then  each  edge in $M_2\setminus M$ that meets $N(w)\cap V(H)$ also meets
  a vertex of $N(w)\cap V(G')$, and thus,
$$|M|+2|N(w)\cap V(G')|\ge 
|N(w)\cap V(H)\cap V(M_2)|+|N(w)\cap V(G')|\ge
(1 + \frac 1{20})\bar a$$ 
Hence there are $w$ and~$M$ satisfying~\eqref{thematching}.
We have thus proved Claim~\ref{Tatmost200leaves}.
                      \end{proof}

                     Observe that if $b> k-2+\frac{\bar a}{3}$, then
 by~\eqref{VH'H'''},
 $|V_{H'}(k-1, H'')|\ge \frac{|H'|}{8.4}> \frac{k}9$
 which is impossible  by Claims \ref{Aug14claim2} and \ref{Tatmost200leaves} and since $|H'|\ge(1-\mu)k$. 
Thus $b\le k-2+\frac{\bar a}{3}$,
and therefore, 
 $$q =b-a\le k-2+\frac{\bar a}{3}  -  (k-2-\bar a)= \frac 4{3}\bar a.$$ 
So by~\eqref{EHG'2}, 
$|E(H, G')| \ > \ \frac{\bar a}{18}|H'| -100 \mu \bar a  k   -2\mu \Delta' k\ \ge \ \frac{\bar a}{19}|H|    -2\mu \Delta' k.$
On the other hand, $|E(H, G')| \le \Delta'|H|$. Thus 
$$ \frac{\bar a}{19}|H|  <    |E(H, G')| +2\mu \Delta' k 
\le 
\Delta' (|H| + 2\mu   k)<2 \Delta' |H| ,$$
that is
\begin{equation}\label{bara38}
\Delta'  >   \frac{\bar a}{38}.
\end{equation}
Let  $v^*\in V(H')$  be a vertex with  at least $\frac{\bar a}{38}$ neighbours in $G'$.
Applying Claim \ref{Tatmost200leaves} twice, and noting that each component of $T-t$ has a leaf,  we obtain first that the  sum of the sizes of the nonsingleton components of $T-t$ is at least $(1-100 \mu)k$  and second that their average size is at least $\frac{(1-100\mu)k}{100\mu k}>10^6$. 
So,
 by~\eqref{bara38}, if there are at least $\Delta'$ nonsingleton components of $T-t$, the $\Delta'$  largest  components together have at least $10^6\Delta'>10^6\frac{\bar a}{38} \ge 200\bar a$ vertices. Otherwise, this holds for the set of all nonsingleton components.
 Hence, 
letting $\mathcal C$ be a minimal subset of these components  of $T-t$ with the property that $|\bigcup\mathcal C| \ge 200\bar a$, we have  

\begin{equation}\label{100baraa}
200\bar a\le  |\bigcup \mathcal C|\le k/2,
\end{equation}
where the second inequality holds since $t$ is a $\frac k2$-separator and since $\bar a  <\mu k$ by~\eqref{bara}. 
 Let $\mathcal C'$ be the set of all 
 other components of $T-t$.

We embed $t$ in $v^*$.  Observation~\ref{minDegofG} allows us to embed  $\bigcup\mathcal C$, starting with the roots, which are embedded in $G'$, and embedding the remainder of $\bigcup\mathcal C$ anywhere  in~$G$, using as much of $G'$ as possible. Let $x$ be the number of vertices  of $\bigcup\mathcal C$ embedded in $G'$. By~\eqref{mindegggforreal}, we know that 
\begin{equation}\label{ifx11}
   \text{if $ x <11\mu k$ then $x=|\bigcup \mathcal C|$} 
\end{equation}
and that at this point
 \begin{equation}\label{aboutv*}
         \text{$v^*$ sees at least $\frac 34k -(|\bigcup \mathcal C|-x)$ unused vertices of~$H$.}
         \end{equation}

  If $x> \mu k$ then we can embed the remainder of $T$ greedily in $H$. For this,  note that the remaining neighbours of $t$  can be mapped to $H$ 
because the quantity from~\eqref{aboutv*}  is at least $k/4$ 
          and  because 
          by Claim~\ref{Tatmost200leaves}, 
           $t$ has at most $100\mu k$ neighbours in total. 
         Hence by~\eqref{ifx11}, we may assume from now on that      \begin{equation}\label{aboutxlemuk}200\bar a \le |\bigcup \mathcal C| =  x\le \mu k.     \end{equation}
          where we use~\eqref{100baraa} for the first inequality.
Thus, the only vertex outside $G'$ used in the embedding so far is $v^*$.

By~\eqref{ZZZZadj}, each vertex of $H'''=G[V(H) \cup Z']$ sees all but at most $5\mu k +100\bar a$  vertices of $H'''$. By~\eqref{not2muk}, $H'-H'''$ has at most $2\mu k$ vertices, each seeing all but at most $k/4$ vertices of $H$, and thus all but at most $k/3$ vertices of $Z'$, by~\eqref{ZZZZ'}.
 Hence  there is  a set $\mathcal P$ of $|H'-H'''-\{v^*\}|$  disjoint paths $p_1p_2p_3$ with $p_2\in V(H'-H''')$ and $p_1, p_3\in Z'$, and such that all $p_i$ are unused.
Let $P_i$ be the set of all vertices $p_i$, for $i=1,2,3$.

We embed the set $S$ of roots of the components of $\mathcal C'$ in  $Z'$, avoiding $P_1\cup P_3$, which is possible by~\eqref{aboutv*} and~\eqref{aboutxlemuk}, and since $t$ has at most $100\mu k$ children, that is $|S|\le 100\mu k$. 
We choose 
a set $\mathcal Q$ of $|H-Z'-v^*|$  disjoint paths $q_1q_2q_3$ with $q_2\in V(H-Z')$ and $q_1, q_3\in Z'$,  such that all $q_i$ are unused and distinct from $P_1\cup P_3$.
This is possible as $$|H-Z'|\le (1+\mu)k+|H'-H|-\frac{89}{90}k\le \frac{k}{50}$$ by~\eqref{not2muk} and~\eqref{ZZZZ'}, and since we have used at most $100\mu k$ vertices of $H'$ so far, while $|P_1\cup P_3|\le 4\mu k$ and $\delta(H)\ge (1-\mu )k$.

Now, $\bigcup\mathcal C'-S$  has  at least $k-1-x-|S|\ge k/2$ vertices and contains at most 
$100 \mu k$ leaves of $T$. A standard argument shows that thus, $\bigcup\mathcal C'-S$ contains a set $\mathcal R$ of $|\mathcal P|+|\mathcal Q|+2\mu k$ disjoint 5-vertex paths, all of whose internal vertices have degree 
2. We order the components of $\mathcal C'$ arbitrarily, and  embed each top-down from its root in~$Z'$,   except that, 
as long as there is an unused path  $P\in {\mathcal P} \cup {\mathcal Q}$ whenever we embed  an endvertex $x_1$ of a path $x_1x_2x_3x_4x_5$ from $\mathcal R$, we immediately embed $x_3x_4x_5$ in $P$. We then embed $x_2$ in a common neighbour of the images of $x_1$ and $x_3$. We go on from $x_5$ as usual. We avoid using   vertices on   paths of ${\mathcal P} \cup { \mathcal Q}$ otherwise. We can  continue until we have used all  paths of  ${\mathcal P} \cup { \mathcal Q}$, by \eqref{ZZZZadj} and the fact that the  number of unembedded vertices of $T$ in the paths of $\mathcal R$ exceeds the sum of the sizes of the unused elements of $\mathcal{P} \cup {\mathcal Q}$ by at least  $5(|\mathcal R|-|\mathcal P|-|\mathcal Q|)= 10\mu k > 5\mu k+ 100\bar a$, where we used~\eqref{aboutxlemuk} for the inequality.   Once all paths from  $\mathcal P\cup \mathcal Q$ have been used, we embed the remainder of $T$ in $Z'$ which is possible since by definition, each vertex of $Z'$ has degree at least $k-2-100\bar a$ in $H'$, all vertices of $H'-Z'$ have been used by now, and more than $200\bar a$ vertices of $T$ were embedded in~$G'$ by~\eqref{aboutxlemuk}.
This finishes  
 the proof.

\newcommand{\etalchar}[1]{$^{#1}$}

\end{document}